\numberwithin{equation}{section}
\newcommand{\sign}{\text{sign}}
\newcommand{\C}{\mathbb C}
\newcommand{\Z}{\mathbb Z}
\newcommand{\F}{\mathcal F}
\newcommand{\R}{\mathbb R}
\newcommand{\N}{\mathbb N}
\newcommand{\X}{\mathcal X}
\newcommand{\D}{\mathcal D}
\newcommand{\be}{\begin{eqnarray}}
\newcommand{\ee}{\end{eqnarray}}
\newcommand{\ben}{\begin{eqnarray*}}
\newcommand{\bc}{\begin{cases}}
\newcommand{\ec}{\end{cases}}
\newcommand{\een}{\end{eqnarray*}}
\newcommand{\ba}{\begin{array}}
\newcommand{\ea}{\end{array}}
\newcommand{\beq}{\begin{equation}}
\newcommand{\eeq}{\end{equation}}
\newcommand{\beqn}{\begin{equation*}}
\newcommand{\eeqn}{\end{equation*}}
\newcommand{\2}{L^2(0,L)}
\newcommand{\izl}{\int_0^L}
\newcommand{\noi}{\noindent}
\newcommand{\dis}{\displaystyle}
\newcommand{\tr}{^\text{tr}}
\newtheorem{theorem}{Theorem}[section]
\newtheorem{open}{Open Problem}[section]
\newtheorem{definition}{Definition}[section]
\newtheorem{proposition}{Proposition}[section]
\newtheorem{claim}{Claim}[section]
\newtheorem{remark}{Remark}[section]
\newtheorem{lemma}{Lemma}[section]
\title{Boundary Effects on the Controllability of Coupled KdV Systems  }
\author{ F. A. Gallego}
\address{\emph{Departamento de Matem\'atica, Universidad Nacional de Colombia (UNAL), Cra 27 No. 64-60, 170003, Manizales, Colombia}}
\email{fagallegor@unal.edu.co}
\author{ A. F. Pazoto}
\address{\emph{Institute of Mathematics, Federal University of Rio de Janeiro, UFRJ, Cidade Universitaria, P.O. Box 68530, CEP 21945-970, Rio de Janeiro, RJ, Brazil.}}
\email{ademir@im.ufrj.br}
\author{I. Rivas}
\address{\emph{Departamento de Matem\'aticas, Universidad del Valle, Calle 13 No. 100 - 00, Ciudadela Universitaria Mel\'endez, Cali, Colombia.}}
\email{ivonne.rivas@correounivalle.edu.co }
\subjclass[2010]{Primary: 35Q53, 93B05 Secondary: 37K10, 30D20}
\keywords{ Gear--Grimshaw system, exact boundary controllability, Hamiltonian and Lagragian systems; Entire functions}
\begin{document}

\begin{abstract}
\noi
We study the exact boundary controllability of a nonlinear coupled system of two Korteweg-de Vries equations on a bounded interval. The model describes the interactions of two weakly nonlinear gravity waves in a stratified fluid. Due to the nature of the system, six boundary conditions are required. However, to study the controllability property, we consider a different combination of the control inputs, with a maximum of four. Firstly, the results are obtained for the linearized system through a classical duality approach and some hidden regularity properties of the boundary terms. This approach reduces the controllability problem to the study of a spectral problem, which is solved by using the Paley–Wiener method introduced by Rosier \cite{rosier}. Then, the issue is to establish when a certain quotient of entire functions still turns out to be an entire function. It can be viewed as a problem of factoring an entire function that, depending on the control configuration, leads to the study of a transcendental equation. Finally, by using the contraction mapping theorem, we derive the local controllability for the full system.
\end{abstract}
\maketitle

\section{Introduction}

In this paper, we are concerned with the study of a nonlinear coupled system derived by Gear and Grimshaw
arising in nonlinear dispersive media \cite{Ge-Gr}. The model describes the strong interactions of two long internal gravity waves in a stratified fluid, where the two waves characterize two different models of the linearized equations of motion. The general structure of the model is  a pair of Korteweg-de Vries (KdV) equations, with linear and  nonlinear coupling terms:
\begin{equation}\label{kdv}
\begin{cases}
u_t + uu_x+u_{xxx} + a v_{xxx} + a_1vv_x+a_2 (uv)_x =0,\\
c v_t +rv_x +vv_x+abu_{xxx} +v_{xxx}+a_2 b uu_x+a_1 b(uv)_x  =0,\\
u(0,x)= u_0(x), \quad v(0,x)=  v_0(x).
\end{cases}
\end{equation}
The parameters $a, a_1, a_2, b, c$ and $r$ are real constants, the unknowns $u$ and $v$
are real-valued functions of the variables $x$ and $t$, and subscripts indicate partial differentiation.
The applicability of the system \eqref{kdv} in a particular
context depends on many factors. Among the more universal of these is that the waves be unidirectional and
essentially two-dimensional. In this case, the study of an initial boundary value problem
arises naturally to model waves generated by a wavemaker mounted at one end of a uniform stretch of the medium.

Our main goal is to study an initial boundary value problem associated with \eqref{kdv} when $x\in [0,L]$ and $t\in \mathbb{R}^{+}$ with the
following boundary conditions:
\begin{equation}\label{inputs}\begin{cases}
u(t,0)=h_0(t),\,\,u_x(t,L)=h_1(t),\,\,u_{xx}(t,L)=h_2(t),\\
v(t,0)=g_0(t),\,\,v_x(t,L)=g_1(t),\,\,v_{xx}(t,L)=g_2(t).
\end{cases}
\end{equation}

The boundary functions $h_i$ and $g_i$, for $i = 0, 1, 2$, are considered as control inputs acting on the boundary conditions.
Their choice was motivated by the previous studies on the boundary controllability properties for the KdV equation developed in \cite{ERZ}.
In fact, since the late 1990s \cite{colin-ghidaglia}, such boundary conditions have been mainly studied
for the well-posedness of the KdV equation in the classical Sobolev spaces $H^s(0, L)$ (see, for instance, \cite{ERZ,colin-ghidaglia,KZ,URZ}).
The results obtained in these works play an important role in our analysis. 

The question to be answered here is the following:\\

{\it Can one drive the solutions of the problem \eqref{kdv}-\eqref{inputs} to have certain desired properties
by choosing appropriate boundary functions? } \\

More precisely, we have the following definition:

\begin{definition} \label{def1}
Let $T>0$. The system \eqref{kdv}-\eqref{inputs} is {\bf exactly controllable in time $T$} if, for any initial state
 $(u_0,v_0) \in (L^2(0,L))^2$ and for any final state  $(u_T,v_T) \in (L^2(0,L))^2$, there exist control functions 
$h_0,g_0\in H^{\frac{1}{3}}(0,T)$,  $h_1,g_1\in L^2(0,T)$ and $h_2,g_2\in H^{-\frac{1}{3}}(0,T)$, such that the solution $(u,v)$ of \eqref{kdv}-\eqref{inputs} satisfies    $$u(T,\cdot)=u_T  \quad \text{and}  \quad v(T,.)=v_T.$$
\end{definition}

The study of the controllability properties for system \eqref{kdv} was first addressed in \cite{MOP07} with the following set of boundary conditions:
\begin{equation}\label{inputsAll-1}
\begin{cases}
u(t,0)=0,\,\,u(t,L)=h_1(t),\,\,u_{x}(t,L)=h_2(t),\\
v(t,0)=0,\,\,v(t,L)=g_1(t),\,\,v_{x}(t,L)=g_2(t).
\end{cases}
\end{equation}

To prove the results, the authors combine the controllability of the linearized system and a fixed point argument.  For the linear case, the controllability was obtained through the classical duality arguments \cite{dolecki}, \cite{lions}, \cite{rosier}, where
the controllability problem is equivalent to establishing an observability inequality for the solutions of the corresponding adjoint system. In this case, a nonstandard unique continuation principle for the eigenfunctions of the differential operator associated with the model is required. As in \cite{rosier}, this is done by combining Fourier analysis and Paley-Winer theorem. Later on, in \cite{cerpa-pazoto}, the authors proved that, in some situations, it is possible to get the controllability of the linearized system by using only two controls on the Neumann boundary conditions, $h_2$ and $g_2$. Indeed, to prove the observability inequality, they used a direct approach based on the multiplier technique, which gives the observability inequality for small values of the length $L$ and large time of control $T$. In both works, the controllability of the nonlinear system runs exactly in the same way, using a fixed point argument.\\

More recently, combining the approach developed in \cite{rosier} with some hidden regularity properties of the boundary terms, the exact controllability property for \eqref{kdv} was studied in \cite{Capistrano} by considering the following set of boundary conditions:
\begin{equation}\label{inputsAll}
\begin{cases}
u_{xx}(t,0)=h_0,\,\,u_{x}(t,L)=h_1(t),\,\,u_{xx}(t,L)=h_2(t),\\
v_{xx}(t,0)=g_0,\,\,v_{x}(t,L)=g_1(t),\,\,v_{xx}(t,L)=g_2(t).
\end{cases}
\end{equation}
Depending on the configuration of the controls, the controllability property of \eqref{kdv}-\eqref{inputsAll} is obtained with some restriction over the size of the interval. More precisely, the controllability holds whenever $L$ does not belong to a countable set of critical lengths. Later on, by considering the boundary conditions given by \eqref{inputsAll-1}, the control problem for \eqref{kdv} was addressed in \cite{Capistrano2}. Proceeding as in \cite{Capistrano}, the authors proved that, with another configuration of four controls, the controllability of the system may depend on the length of the spatial domain.

In the works mentioned above \cite{Capistrano,Capistrano2,MOP07}, the controllability problem is addressed by following closely the ideas
introduced in \cite{rosier} while studying the boundary controllability of the KdV equation posed on a bounded interval $(0,L)$.
Being different from other systems, the length $L$ of the spatial domain may play a crucial role in
determining the controllability of the system, especially when some configurations of the control inputs are allowed to be used. This phenomenon, the so-called {\it critical length phenomenon}, was
observed for the first time in \cite{rosier}. Roughly speaking, it was proved the existence of a
finite dimensional subspace $M$ of $L^2(0,L)$, which is not reachable by the linearized KdV equation, when
starting from the origin, if $L$ belongs to a countable set of lengths.

In this paper, we proceed as in \cite{MOP07}, \cite{rosier} to extend the analysis of the controllability properties for system
\eqref{kdv}-\eqref{inputs}. By considering different combinations of the boundary controls $g_i$ and $h_i$, with $i = 0, 1, 2$, we prove that the controllability properties hold for any length $L$ of the spatial domain. This is done by combining the controllability of the linearized system and a fixed point argument in an appropriate function space. As in the previous works, the controllability of the linearized system is equivalent to the proof of an observability inequality for the solutions of the corresponding adjoint system. To prove such inequality, we derived linear estimates, including hidden regularity (sharp trace regularity) results for the solutions of the adjoint system. In some cases, however, this is not enough due to some technical difficulties we have to deal with. Indeed, proceeding as in \cite{rosier}, the above arguments reduce the problem to show a unique continuation result for the state operator. To prove the result, we extend the solution under consideration by zero in $\mathbb{R} \setminus [0, L]$ and take the Fourier transform. Then, the issue is to establish when a certain quotient of entire functions still turns out to be an entire function, which leads to the study of the transcendental equation
\begin{equation}\label{zez}
ze^z = \alpha.
\end{equation}
Equation  \eqref{zez} has been studied by a large number of authors (see, for instance, \cite{corless}, \cite{siewert}), and its solutions are given by the Lambert function $W(z)$ which satisfies
$$W(z)e^{W(z)} = \alpha.$$
Moreover, if $\alpha\neq 0$, equation \eqref{zez} has infinitely many solutions, which are given by the zeros of the holomorphic complex functions
$$W_k(z) = \log(\alpha) – z –\log(z) + 2k\pi i, k\in \mathbb{Z},$$ 
where $\log(z)$ denotes the principal branch of the log-function. This approach allows us to conclude that, in some cases, the controllability holds for any $L>0$.

Our analysis over some of the coefficients in the system is based on the assumption made in \cite{bona}, \cite{saut}, where the parameters $b$ and $c$ are automatically positive and $r$ is a non-dimensional parameter that could be assumed very small. However, as it will become clear throughout the paper, to provide the tools to handle this problem, we assume that
\begin{equation}\label{constantscondition}
 b, c > 0 \quad  \text{and} \quad  0<1-a^2 b.
\end{equation}

Our main results can be summarized as follows.

\begin{theorem}\label{maintheorem}
 Let \(T>0\), \(L>0 \) and $r\neq 0$. Then, there exists $\delta=\delta(t,L)>0$, such that, for any $(u_0,v_0)$, $ (u_T,v_T) \in\X:=(L^2(0,L))^2$ verifying
$$\|(u_0,v_0)\|_{\X} + \|(u_T,v_T)\|_{\X}\le \delta,$$
there exist controls $(h_0,h_1,h_2)$ and $(g_0,g_1,g_2)$ in $\mathcal{H}:=H^{\frac13}(0,T)\times L^2(0,T)  \times H^{-\frac13}(0,T)$, with the following configurations for four controls:
\begin{enumerate}
\item[(i)]  $ h_0\neq 0, h_1\neq 0, h_2\neq 0 \quad \text{and} \quad  g_0= 0, g_1\neq 0, g_2=0$,
\item[(ii)]  $ h_0= 0, h_1\neq 0, h_2= 0 \quad \text{and} \quad  g_0\neq 0, g_1\neq 0, g_2\neq0$,
\item[(iii)]  $ h_0\neq 0, h_1\neq 0, h_2= 0 \quad \text{and} \quad  g_0\neq 0, g_1\neq 0, g_2=0$,
\item[(iv)]  $ h_0= 0, h_1\neq 0, h_2\neq 0 \quad \text{and} \quad  g_0= 0, g_1\neq 0, g_2\neq0$,
\end{enumerate}
 such that the system \eqref{kdv}-\eqref{inputs} under condition \eqref{constantscondition}, admits a unique solution
 $$(u,v) \in C\left([0,T];(L^2(0,L))^2\right)\cap L^2\left(0,T,(H^1(0,L))^2\right)$$
 satisfying
\vspace{-0.2cm}
 \begin{gather*}
u(T,x)=u_T(x)\quad\mbox{ and }\quad v(T,x)=v_T(x).
\end{gather*}
 Moreover, if the parameters $a,b,c$ satisfy \eqref{constantscondition} and
$$0 <	C_1(1-a^2b)<c,$$
 where $C_1=C_1(T,L)>0$ is the hidden regularity constant given in \eqref{putaquepario2}, the same result is obtained with the following configuration for three controls:
 
\begin{enumerate}
\item[(v)]  $ h_0\neq 0, h_1\neq 0, h_2= 0 \quad \text{and} \quad  g_0\neq 0, g_1= 0, g_2=0$,
\item[(vi)]  $ h_0\neq 0, h_1= 0, h_2= 0 \quad \text{and} \quad  g_0\neq 0, g_1\neq 0, g_2=0$.
\end{enumerate}
 \end{theorem}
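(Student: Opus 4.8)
The plan is to follow the classical two-stage strategy: first prove exact controllability of the linearization of \eqref{kdv} about the origin, and then recover the nonlinear result by a contraction argument. The linearized system reads $u_t+u_{xxx}+av_{xxx}=0$, $cv_t+rv_x+abu_{xxx}+v_{xxx}=0$, subject to \eqref{inputs} with the prescribed control configuration. I take for granted, from the earlier part of the paper, the well-posedness of this initial-boundary value problem in $C([0,T];\X)\cap L^2(0,T;(H^1(0,L))^2)$ together with the sharp trace (hidden regularity) estimates for its solutions; these fix the functional framework in which both the linear control operator and the fixed point are defined.

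By duality, exact controllability in time $T$ is equivalent to an observability inequality $\|(\varphi(T),\psi(T))\|_{\X}^2\le C\int_0^T\mathcal B(\varphi,\psi)\,dt$ for the adjoint system run backward in time with homogeneous boundary conditions dual to the chosen controls, where $\mathcal B$ gathers precisely the boundary traces in duality with the active inputs $h_i,g_i$. I would prove this inequality by a compactness--uniqueness argument: assuming it fails, a normalized sequence of adjoint data with vanishing observation converges, thanks to the gain of regularity, to a nonzero adjoint solution with $\mathcal B\equiv 0$; decomposing over the spectrum of the adjoint spatial operator then reduces the matter to a unique continuation property for its eigenfunctions.

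The crux, and the place where the control configuration is decisive, is this unique continuation step, handled following Rosier's method. Extending a candidate eigenfunction by zero outside $[0,L]$ and taking the Fourier transform, Paley--Wiener forces the transform to be entire of exponential type, while the characteristic equation of the coupled third-order operator (whose principal part is governed by the determinant $1-a^2b$, which explains condition \eqref{constantscondition}) writes it as a quotient of two entire functions. Entireness then forces a compatibility condition that unwinds to the transcendental equation \eqref{zez}, $ze^z=\alpha$, whose solution set is described by the Lambert function. The point to verify, for each of the configurations (i)--(iv), is that the set of lengths $L$ admitting an admissible root is empty, so that no critical length arises and observability holds for every $L>0$. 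For the three-control cases (v)--(vi) fewer traces survive in $\mathcal B$ and the underlying multiplier identity no longer closes by itself: a boundary term carrying the factor $1-a^2b$ must be absorbed, which is exactly what the hypothesis $0<C_1(1-a^2b)<c$, with $C_1$ the hidden-regularity constant from \eqref{putaquepario2}, permits before the same continuation argument applies.

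With linear controllability established, I would close the nonlinear problem by applying the linear control operator to the data corrected by the Duhamel contribution of the quadratic terms $uu_x$, $vv_x$, $(uv)_x$, and set up a map on a small ball of $C([0,T];\X)\cap L^2(0,T;(H^1(0,L))^2)$; the bilinear estimates furnished by the $L^2(0,T;H^1)$-smoothing make this map a contraction provided $\|(u_0,v_0)\|_{\X}+\|(u_T,v_T)\|_{\X}\le\delta$ is small enough, and the Banach fixed point theorem delivers the unique solution meeting $u(T,\cdot)=u_T$ and $v(T,\cdot)=v_T$. I expect the main obstacle to be the spectral Paley--Wiener step: the bookkeeping of which traces survive in each configuration changes both the numerator of the entire-function quotient and the precise value of $\alpha$ in \eqref{zez}, and showing that the admissible-root set is genuinely empty, rather than merely countable, is what upgrades the controllability to all $L>0$.
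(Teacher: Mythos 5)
Your proposal follows essentially the same route as the paper: duality reduces the linear problem to an observability inequality proved by compactness--uniqueness, the resulting unique continuation question is settled by Rosier's Paley--Wiener method leading to the Lambert-type equation $ze^z=\alpha$, the hypothesis $0<C_1(1-a^2b)<c$ is used exactly as you say to absorb the uncontrolled boundary trace in the three-control cases, and the nonlinear statement follows by a contraction built from the linear control operator and the Duhamel correction. The only difference of detail is that the paper needs the Lambert analysis only for configurations (i)--(ii); for (iii)--(iv) and for the three-control cases the numerator of the Paley--Wiener quotient is a pure polynomial of degree at most five against a degree-six denominator, so entireness fails by a degree count alone, with no transcendental equation to study.
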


The additional condition of the parameters $a,b$ and $c$ to prove the exact controllability for three controls allows the absorption of some boundary terms, leading to linear a priori estimates that play an important role in the proofs.\\

Following the ideas involved in the proof of Theorem \ref{maintheorem}, it is possible to obtain similar results with
different configurations of three and four controls. We also remark that the corresponding linear system can be written
in an equivalent diagonal form and, when the parameter $r=0$,  we obtain two independent KdV equations. In this case, the model
can be analyzed by using the same theory, which leads to the desired results for the corresponding linear system. We address both issues in the next sessions.\\

In addition, Theorem \ref{maintheorem} establishes as a fact that system \eqref{kdv}-\eqref{inputs} inherits the interesting controllability properties initially observed in \cite{ERZ} for the KdV equation. As far as we know, the problem we address here has not been addressed in the literature yet. Since the system is very sensitive to the choice of boundary conditions, the existing results do not give an immediate answer to it, including \cite{Capistrano} and \cite{Capistrano2}.\\

The remainder of this paper is organized as follows. Section \ref{well-posedness} covers the study of well-posedness for the linear system associated with \eqref{kdv}, its adjoint, and the full system. Additionally, we present various linear estimates, among them some hidden regularities results, which are crucial tools to prove our results.  Section \ref{observabilildade1} is devoted to the observability conditions related to the adjoint system when different control configurations are considered. In Section \ref{exactcontrollability}, we give a positive answer to the linear controllability problem and extend the result to the full system via the contraction mapping theorem. Finally, Section \ref{furthercommentsopenproblems} is dedicated to further comments and open problems.

\section{ Well-posedness}\label{well-posedness}

We first introduce the notation that will be used in the whole paper. From now on, we consider the product of Sobolev spaces
$$\mathbf{\mathcal{H}} :=H^{\frac13}(0,T)\times L^2(0,T)  \times H^{-\frac13}(0,T),$$
endowed with their natural inner products for the boundary data. We also introduce the space
$$\mathcal{X}:=(L^2(0,L))^2,$$
for the initial data space, endowed with the inner product
\begin{equation}
\label{eq:prod-1} \left\langle (u,v) , (\varphi
,\psi)\right\rangle := \frac{b}{c}\int_0^L u\varphi dx + \int_0^L v\psi dx.
\end{equation}
\subsection{Linear Systems} In this section, we analyze the well-posedness of both the linear system
associated to \eqref{kdv}-\eqref{inputs} and the corresponding adjoint system ($h_1=g_i\equiv 0$, $i=0,1,2$).
We also obtain some hidden (sharp trace) regularities results together with linear estimates for the solutions.\\

In the sequel, $C$ denotes a generic positive constant; $C_0, C_1, \ldots$, etc. are other positive (specific) constants. The results obtained for the KdV equation will be used in our proofs.

\subsubsection{\bf The IBVP for the KdV equation}
In \cite{ERZ}, the authors study the well-posedness of the initial boundary value problem for the KdV equation posed on a bounded interval $[0,L]$ for $s>-1$, when the function $f$ is in a lower regularity space is treated as the nonlinear term:
\begin{equation}\label{kdvlin1}
\begin{cases}
w_t + w_{xxx} + w_{x}  =f, \\
w(t,0)= k_1(t),\,w_{x}(t,L)=k_2(t),\, w_{xx}(t,L)=k_3(t),\\
w(0,x)=w_{0}.
\end{cases}
\end{equation}
and as a direct consequence, Propositions 2.5, 2.6, and 2.8 in \cite{ERZ}, can be adapted directly  for the problem considering  $s=0$,
\begin{equation}\label{kdvlinaux}
\begin{cases}
w_t + w_{xxx} =f, \\
w(t,0)= k_1(t),\,w_{x}(t,L)=k_2(t),\, w_{xx}(t,L)=k_3(t),\\
w(0,x)=w_{0}.
\end{cases}
\end{equation}
More precisely,
\begin{proposition}\label{linearNonr-kdv} Let $T>0$ be given. Then,  for any ${w}_0\in L^2(0,L)$, $f \in L^1(0,T;L^2(0,L))$
and boundary data $(k_1,k_2,k_3) \in \mathbf{\mathcal{H}} $, system \eqref{kdvlinaux} admits a unique solution
$$w \in Z_T := C([0,T]; L^{2}(0,L))\cap L^{2}(0,T;H^{1}(0,L)),$$
which, in addition, has the hidden (or sharp trace) regularities
$$\partial_x^j w \in L^{\infty}_x(0,L; H^{\frac{1-j}{3}}(0,T)), \quad j=0, 1, 2.$$
Moreover, there exists $C>0$, such that
$$||w||_{Z_T} + \sum_{j=0}^2 ||\partial_x^j w||_{L^{\infty}_x(0,L; H^{\frac{1-j}{3}}(0,T))} \leq
C\{||w_0||_{L^{2}(0,L)} + ||(k_1,k_2,k_3)||_{\mathbf{\mathcal{H}}} + ||f||_{L^1(0,T;L^{2}(0,L)) }\}.$$
\end{proposition}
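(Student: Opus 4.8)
The plan is to reduce the well-posedness of the homogeneous KdV equation \eqref{kdvlinaux} to the already-established theory for \eqref{kdvlin1}, which carries the extra transport term $w_x$. The statement of Proposition \ref{linearNonr-kdv} itself signals this strategy: it asserts that ``following the same approach'' of Propositions 2.5, 2.6 and 2.8 in \cite{ERZ} yields the result. The cleanest way to make this rigorous is to treat the lower-order term $w_x$ as a right-hand side forcing. Concretely, I would rewrite \eqref{kdvlinaux} in the form $w_t + w_{xxx} + w_x = f + w_x$, so that any solution of the equation without transport solves the ERZ equation with modified source $\tilde f := f + w_x$, and conversely. The point is that $w_x$ is controlled by the $L^2(0,T;H^1(0,L))$ part of the $Z_T$ norm, so it is a genuinely subordinate perturbation.

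First I would set up the semigroup/Duhamel framework. Let $\{W_0(t)\}_{t\ge 0}$ denote the $C_0$-semigroup generated by the operator $A w = -w_{xxx}$ on $L^2(0,L)$ with the homogeneous boundary conditions $w(0)=w_x(L)=w_{xx}(L)=0$; the existence of this semigroup and the boundary trace estimates for the associated inhomogeneous-boundary problem are exactly what Propositions 2.5--2.8 of \cite{ERZ} provide (the absence of $w_x$ only removes a bounded term from the generator, so the $C_0$-semigroup theory goes through verbatim). I would then lift the boundary data $(k_1,k_2,k_3)$ by a continuous operator into the interior forcing, exactly as in \cite{ERZ}, reducing to a problem with homogeneous boundary conditions and a source term in $L^1(0,T;L^2(0,L))$, together with the claimed hidden trace regularities for each of the three boundary lifts. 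This step is routine transcription and yields a bounded solution map for the linear problem with $w_x$ moved to the right-hand side, together with the estimate
\begin{equation*}
\|w\|_{Z_T} + \sum_{j=0}^2 \|\partial_x^j w\|_{L^{\infty}_x(0,L; H^{\frac{1-j}{3}}(0,T))} \leq C\{\|w_0\|_{L^{2}(0,L)} + \|(k_1,k_2,k_3)\|_{\mathcal{H}} + \|\tilde f\|_{L^1(0,T;L^{2}(0,L))}\}.
\end{equation*}

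The main obstacle, and the only nontrivial point, is closing the loop on the perturbation term $w_x$: having solved the ERZ equation with source $\tilde f = f + w_x$ requires knowing $w_x$ in advance. I would handle this by a fixed-point / Picard iteration on a short time interval $[0,\tau]$. Defining the map $\Lambda$ that sends a candidate $\tilde w \in Z_\tau$ to the solution of the ERZ system with source $f + \tilde w_x$, the a priori estimate above shows $\Lambda$ maps $Z_\tau$ into itself, and the difference of two images satisfies $\|\Lambda \tilde w_1 - \Lambda \tilde w_2\|_{Z_\tau} \le C \|\partial_x(\tilde w_1 - \tilde w_2)\|_{L^1(0,\tau;L^2)} \le C\,\tau^{1/2}\|\tilde w_1 - \tilde w_2\|_{L^2(0,\tau;H^1)}$ by Cauchy--Schwarz in time. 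Choosing $\tau$ small enough that $C\tau^{1/2}<1$ makes $\Lambda$ a contraction, giving a unique fixed point, which is the desired solution on $[0,\tau]$. Since the estimate driving $\tau$ depends only on $C$ and not on the size of the data, the construction extends to $[0,T]$ by iterating on consecutive subintervals and concatenating, with the final constant depending on $T$. The hidden regularities are inherited at each step from the ERZ lifts, and the global bound follows by summing the estimates over the finitely many subintervals. Uniqueness on $[0,T]$ follows from the local contraction together with linearity (the difference of two solutions solves the homogeneous problem with zero data).
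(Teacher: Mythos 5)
Your proposal is sound, but it takes a genuinely different route from the paper's. The paper gives no fixed-point argument for this proposition at all: it simply asserts that the machinery of Propositions 2.5, 2.6 and 2.8 of \cite{ERZ} (the semigroup generated by $-\partial_{xxx}$ with domain $\{g\in H^3(0,L): g(0)=g'(L)=g''(L)=0\}$, the boundary integral operators, and the associated sharp trace estimates) carries over verbatim when the transport term is dropped, i.e.\ it re-derives the ERZ package for the operator $-\partial_{xxx}$ directly. You instead keep the ERZ theorem as a black box for the equation \emph{with} transport and remove $w_x$ perturbatively: solve $w_t+w_x+w_{xxx}=f+\tilde w_x$, run a contraction in $Z_\tau$ via $\|\partial_x(\tilde w_1-\tilde w_2)\|_{L^1(0,\tau;L^2)}\le \tau^{1/2}\|\tilde w_1-\tilde w_2\|_{L^2(0,\tau;H^1)}$, and continue in time using that $\tau$ is independent of the data size (linearity). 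This is correct, and it is in fact exactly the device the paper itself employs one level up, in Proposition \ref{linearNonr-1}, to reinstate the term $rv_x$ after decoupling; so your proof is stylistically consonant with the paper even though it handles this particular proposition differently. What the paper's route buys is directness: no iteration, and no need to restrict and glue the $H^{\pm\frac13}$ boundary data and the hidden-regularity traces across subintervals (a point you gloss over; it is standard because $|{\pm}\tfrac13|<\tfrac12$, so restriction and extension-by-zero are bounded on these spaces, but it should be said). What your route buys is that nothing from \cite{ERZ} has to be re-proved.

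Two blemishes, neither fatal. First, the parenthetical claim that dropping $w_x$ ``only removes a bounded term from the generator'' is false: $\partial_x$ is not a bounded operator on $L^2(0,L)$, only relatively bounded with respect to $\partial_{xxx}$. The correct and standard justification that $A=-\partial_{xxx}$ with the stated domain generates a $C_0$-semigroup is direct verification of dissipativity of $A$ and of its adjoint, which is what the paper implicitly relies on when it introduces $S(t)$ right after the proposition. Second, that whole semigroup-for-$-\partial_{xxx}$ paragraph is extraneous to your own argument: the fixed point needs only the ERZ solution map for the equation with $w_x$, applied with source $f+\tilde w_x$. As written, your middle paragraph pulls toward the paper's ``redo ERZ without the transport term'' strategy (in which case the contraction is unnecessary), while your opening and closing paragraphs carry out the black-box reduction. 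You should commit to one of the two; the contraction half stands on its own once the stray semigroup discussion is deleted.
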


\subsubsection{\bf The system of two couple KdV equations}
We consider the direct system
\begin{equation}\label{kdvlin}
\begin{cases}
u_t + u_{xxx} + av_{xxx}  =0, \\
c v_t +rv_x +abu_{xxx} + v_{xxx} =0,
\end{cases}
\end{equation}
with the following initial and boundary data
\begin{gather}\label{kdvlinconditions-1}
u(0,x)= u_0(x) \quad \text{and} \quad  v(0,x)  = v_ 0(x),
\end{gather}
\begin{gather}\label{kdvlinconditions}
\begin{cases}
u(t,0) =h_{0}(t),\, u_x(t,L) =h_{1}(t), \, u_{xx}(t,L)  = h_{2}(t) ,\\
v(t,0) =g_{0}(t),\, v_{x}(t,L) =g_{1}(t), \, v_{xx}(t,L)  =g_{2}(t).\\
\end{cases}
\end{gather}
When $h_i=g_i=0$, $\forall i=0,1,2$, the operator associated with the space variable is dissipative with the norm defined in \eqref{eq:prod-1}.  Nevertheless,  it is not clear that the corresponding adjoint operator is also dissipative.  Hence, the classical semi-group theory does not work directly over the system \eqref{kdvlin}-\eqref{kdvlinconditions}. To overcome this difficulty, we apply a fixed-point argument. Therefore, we first introduce the following non-homogeneous coupled  system associated with \eqref{kdvlin}
\begin{equation}\label{kdvlinforc}
\begin{cases} 
u_t + u_{xxx} + av_{xxx}  =p(t,x), \\
c v_t + abu_{xxx} + v_{xxx} =q(t,x), 
\end{cases}
\end{equation}
with initial and boundary data given by  \eqref{kdvlinconditions-1} and \eqref{kdvlinconditions}, respectively.
\begin{remark}\label{decouple}

By using the change of variable, provided in  \cite[Remark 2.1]{MOP07},
\begin{equation*}\label{a}
  \begin{cases}
  u=2a\tilde{u} + 2a\tilde{v}\\
  v=((\frac{1}{c}-1)+\lambda)\tilde{u} + ((\frac{1}{c}-1)-\lambda)\tilde{v},
  \end{cases}
\end{equation*}

with $\lambda=\sqrt{(\frac{1}{c}-1)^2 + \frac{4a^2 b}{c}}>0$,  allows us to transform the original  system \eqref{kdvlinforc}  into two independent KdV equations given by
\begin{gather} \label{kdvdecoupleforcing}
\begin{cases} 
\bar{u}_t + \alpha^+\bar{u}_{xxx}=\bar{p}, \\
 \vspace{2mm} \bar{v}_t +\alpha^-\bar{v}_{xxx} =\bar{q},
 \end{cases}
\end{gather}

The initial conditions will be called
$$\bar{u}(0,x)= \bar{u}_0(x), \quad \bar{v}(0,x)  = \bar{v}_0(x),$$
and  boundary conditions
\begin{equation}\label{kdvlin2condicions}
\begin{cases}
\bar{u}(t,0) = \bar{h}_0, \quad  \bar{u}_x(t,L) = \bar{h}_1,  \quad  \bar{u}_{xx}(t,L)  = \bar{h}_2, \\
\bar{v}(t,0) = \bar{g}_0, \quad \bar{v}_x(t,L)  =  \bar{g}_1, \quad \bar{v}_{xx}(t,L)  = \bar{g}_2,
\end{cases}
\end{equation}
\\   
the parameters $\alpha^+,\alpha^- \in \R$ are given by
\begin{equation}
\alpha_+=\frac{\frac{1}{c}+1+\lambda}{2}\qquad\mbox{ and }\qquad\alpha_-=\frac{\frac{1}{c}+1-\lambda}{2},
\end{equation}
notice that 
$\alpha_+>0$ since, $\lambda$ and $c$ are non negative numbers, and since $1-a^2b>0$ together with the definition of $\lambda$, by direct computations we conclude that $\alpha_->0.$   Hence, the wellposedness of system \eqref{kdvdecoupleforcing} is a direct consequence of Proposition \ref{linearNonr-kdv} and so, the 
 system \eqref{kdvlinforc} through the change of variables is also well-posed.\end{remark}

Therefore, the well-posedness of the system \eqref{kdvlinforc} with conditions \eqref{kdvlinconditions-1}--\eqref{kdvlinconditions}
can be stated as follows:
\begin{proposition}\label{linearNonr} Let $T>0$ be given. Then,  for any $({u}_0,{v}_0)\in \X$, ${p},{q} \in L^1(0,T;L^2(0,L))$ and boundary data $\vec{h}, \vec{g} \in \mathbf{\mathcal{H}} $, system \eqref{kdvlinforc}, \eqref{kdvlinconditions-1}--\eqref{kdvlinconditions}  admits a unique solution
$$(u,v) \in X_T := C([0,T]; \X)\cap L^{2}(0,T;(H^{1}(0,L))^2),$$
which, in addition, has the hidden (or sharp trace) regularities
$$\partial_x^j u, \partial_x^j v \in L^{\infty}_x(0,L; H^{\frac{1-j}{3}}(0,T)), \quad j=0, 1, 2.$$
Moreover, there exists $C>0$, such that
\begin{multline}
||(u,v)||_{X_T} + \sum_{j=0}^2 ||(\partial_x^j u,\partial_x^j v)||_{L^{\infty}_x(0,L; (H^{\frac{1-j}{3}}(0,T))^2)} \leq
C\{||(u_0,v_0)||_{\X}  + ||(\vec{h},\vec{g})||_{\mathbf{\mathcal{H}}^2} \\
+ ||(p,q)||_{L^1(0,T; \X)}\}.
\end{multline}
\end{proposition}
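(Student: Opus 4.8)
The plan is to reduce the coupled, non-diagonal system \eqref{kdvlinforc} to two decoupled KdV-type equations of the form treated in Proposition \ref{linearNonr-kdv}, and then to close the coupling via a fixed-point argument. Concretely, I would first invoke Remark \ref{decouple}: there is a linear change of variables $(u,v)\mapsto(\bar u,\bar v)$ transforming \eqref{kdvlinforc} into the diagonal system \eqref{kdvdecoupleforcing}, in which each component $\bar u,\bar v$ solves a scalar equation $\bar u_t+\alpha^+\bar u_{xxx}=\bar p$ (resp. with $\alpha^-,\bar q$). The transformed initial data $(\bar u_0,\bar v_0)$ lie in $\X$, the transformed boundary data $(\vec{\bar h},\vec{\bar g})$ lie in $\mathcal H^2$, and the transformed forcing $(\bar p,\bar q)$ lies in $L^1(0,T;\X)$, each with norm controlled by the original data (the change of variables being a bounded invertible linear map on each space). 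After rescaling time by $|\alpha^\pm|$ to normalize the leading coefficient to $\pm1$, Proposition \ref{linearNonr-kdv} applies to each decoupled equation and yields a unique solution in $Z_T$ with the stated hidden regularities, together with the corresponding trace estimate. Summing the two scalar estimates and transforming back gives the full estimate of the proposition, modulo the constants introduced by the change of variables and the time rescaling.

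The subtle point is that the change of variables in Remark \ref{decouple} diagonalizes only the \emph{principal} part. The original system \eqref{kdvlinforc} has dropped the first-order term $rv_x$ present in \eqref{kdvlin}, so the diagonalization is clean for the purely third-order operator; this is precisely why the proposition is stated for \eqref{kdvlinforc} rather than for the $r\neq0$ system directly. I would therefore treat the genuine system \eqref{kdvlin}-\eqref{kdvlinconditions} (with the $rv_x$ term) by writing it as \eqref{kdvlinforc} with a forcing that includes $rv_x$ on the right-hand side, and then run a contraction argument: given a candidate $(u,v)\in X_T$, define $(p,q)$ to absorb the lower-order coupling term $-rv_x$, solve the decoupled system to produce a new $(u,v)$, and show this map is a contraction on $X_T$ (or on a short-time version thereof, iterated to cover $[0,T]$). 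The gain here comes from the fact that $v_x$ is one derivative below the $L^2(0,T;H^1)$ regularity of $v$, so on a small time interval the map contracts; this is the standard mechanism, and it is what the paragraph preceding the proposition alludes to when it says the adjoint operator is not obviously dissipative and ``we apply a fixed point argument.''

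The main obstacle, and the place where I would spend the most care, is the propagation of the hidden trace regularities $\partial_x^j u,\partial_x^j v\in L^\infty_x(0,L;H^{(1-j)/3}(0,T))$ for $j=0,1,2$ through both the change of variables and the fixed-point iteration. Proposition \ref{linearNonr-kdv} delivers these traces for each scalar equation, but the linear recombination $(\bar u,\bar v)\mapsto(u,v)$ must be verified to preserve each anisotropic space $L^\infty_x(0,L;H^{(1-j)/3}(0,T))$ componentwise; since the transformation acts by constant coefficients it commutes with $\partial_x$ and with the time-Sobolev norms, so this should go through, but it requires checking that the rescaling of time does not degrade the temporal regularity index beyond a harmless constant factor. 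In the fixed-point step one must also confirm that the map improves or at least preserves the trace regularity of the forcing contribution $rv_x$, which is where the $L^1(0,T;L^2)$ hypothesis on the forcing in Proposition \ref{linearNonr-kdv} is used. Once uniqueness is established for the decoupled problem (immediate from linearity and the scalar uniqueness), uniqueness for the coupled problem follows from the contraction, completing the proof.
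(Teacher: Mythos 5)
Your proposal is correct and follows essentially the same route as the paper: system \eqref{kdvlinforc} (which contains no $rv_x$ term) is decoupled by the change of variables of Remark \ref{decouple} into two independent scalar problems of the form \eqref{kdvlinaux}, Proposition \ref{linearNonr-kdv} is applied to each, and the estimates are transported back through the (constant-coefficient, invertible) transformation. The contraction argument in your second paragraph is superfluous for this particular statement --- it is precisely the paper's proof of the subsequent Proposition \ref{linearNonr-1}, where the term $rv_x$ is reinstated by treating it as a forcing term $-\frac{r}{c}v_x$ and iterating the present proposition.
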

\begin{proof}
By Remark \ref{decouple}, system \eqref{kdvlinforc}, \eqref{kdvlinconditions-1}--\eqref{kdvlinconditions} can be decoupled into two independent initial boundary value problems for the KdV equation as follows
 \begin{equation}\label{chanvar}
\left\{
\begin{array}{ll}
\bar{u}_t + \alpha^+\bar{u}_{xxx} =\bar{p},\\
\bar{u}(t,0) = \bar{h}_0(t), \\  \bar{u}_x(t,L) = \bar{h}_1(t),  \\
\bar{u}_{xx}(t,L)  = \bar{h}_2(t),\\
\bar{u}(0,x)= \bar{u}^0(x)
\end{array}
\right. \quad  \text{and}  \quad
\left\{
\begin{array}{ll}
\bar{v}_t +\alpha^-\bar{v}_{xxx}=\bar{q},  \\
\bar{v}(t,0) = \bar{g}_0(t), \\ \bar{v}_x(t,L)  =  \bar{g}_1(t), \\
\bar{v}_{xx}(t,L)  = \bar{g}_2(t), \\
\bar{v}(0,x)  = \bar{v}^0(x).
\end{array}
\right.
\end{equation}
Moreover, 
$\bar{p},\bar{q}\in L^1(0,T;L^2(0,L))$, $\vec{h},\vec{g}\in \mathbf{\mathcal{H}}$ and $\bar{u}^0,\bar{v}^0\in L^2(0,L)$, respectively. Then, the result follows from Propositions \ref{linearNonr-kdv}. We also remark that, if $\bar{p}=\bar{u}^0 = 0$, the solution $\bar{u}$ of \eqref{chanvar} can be written as
\begin{equation}
\bar{u}(t)=S_{bdr} (t)\vec{h},
\end{equation}
where $\vec{h}:=(\bar{h}_0,\bar{h}_1,\bar{h}_2)$. The operator $S_{bdr} (t)$ is the called boundary integral operator, whose explicit representation can be found in \cite{KRZ}, \cite{KZ}. For the solution $\bar{v}$ we have a similar representation.
\end{proof}
Having Proposition \ref{linearNonr} in hand, the local well-posedness of the system \eqref{kdvlin}--\eqref{kdvlinconditions} holds. 

\begin{proposition}\label{linearNonr-1} Let $T>0$ be given. Then,  for any $({u}_0,{v}_0)\in \X$ and boundary data
$\vec{h}, \vec{g} \in \mathbf{\mathcal{H}} $, system \eqref{kdvlin}--\eqref{kdvlinconditions}
admits a unique solution
$$(u,v) \in X_T := C([0,T]; \X)\cap L^{2}(0,T;(H^{1}(0,L))^2),$$
which, in addition, has the hidden (or sharp trace) regularities
$$\partial_x^j u, \partial_x^j v \in L^{\infty}_x(0,L; H^{\frac{1-j}{3}}(0,T)), \quad j=0, 1, 2.$$
Moreover, there exists $C>0$, such that
$$||(u,v)||_{X_T} + \sum_{j=0}^2 ||(\partial_x^j u,\partial_x^j v)||_{L^{\infty}_x(0,L; (H^{\frac{1-j}{3}}(0,T))^2)} \leq
C\{||(u_0,v_0)||_{\X} + ||(\vec{h},\vec{g})||_{\mathbf{\mathcal{H}}^2}\}.$$
\end{proposition}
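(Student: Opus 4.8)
The plan is to obtain Proposition \ref{linearNonr-1} from the nonhomogeneous Proposition \ref{linearNonr} by a contraction/fixed-point argument that treats the coupling terms as a forcing. The key observation is that the two systems \eqref{kdvlin} and \eqref{kdvlinforc} differ only in that the latter carries the source terms $p,q$ on the right-hand side, and that \eqref{kdvlin} can be rewritten so that the troublesome third-order coupling is moved to the right-hand side as a lower-order-in-controllability (but same-order-in-derivatives) perturbation. Concretely, from the first equation we have $u_t + u_{xxx} = -a v_{xxx}$, and from the second $c v_t + v_{xxx} = -rv_x - ab\, u_{xxx}$. The difficulty is that $v_{xxx}$ and $u_{xxx}$ are not in $L^1(0,T;L^2(0,L))$ for a generic $X_T$ solution, so one cannot directly feed them into Proposition \ref{linearNonr} as $(p,q)$. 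This is exactly why the diagonalization in Remark \ref{decouple} is invoked: after the change of variables the principal parts decouple into two scalar KdV operators $\partial_t + \alpha^\pm \partial_{xxx}$, and the remaining coupling (coming from the $rv_x$ term, which is only first order) becomes the forcing.

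First I would apply Remark \ref{decouple} to pass to the diagonalized variables $(\bar u, \bar v)$ governed by \eqref{kdvdecoupleforcing}, where the only coupling surviving in the equations is the transport term $rv_x$ rewritten in the bar variables. Since the change of variables is a fixed invertible linear map on $\mathcal X$ (and on $\mathcal H$ for the boundary data), it suffices to prove the estimate for the bar system and then transport it back, which only costs a constant depending on the transformation matrix. In the bar coordinates the system reads $\bar u_t + \alpha^+ \bar u_{xxx} = F_1(\bar u,\bar v)$ and $\bar v_t + \alpha^- \bar v_{xxx} = F_2(\bar u,\bar v)$, where $F_1,F_2$ are linear combinations of $\bar u_x$ and $\bar v_x$ (first-order terms) with constant coefficients. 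The crucial point is that these forcing terms involve only one spatial derivative, so for $(\bar u,\bar v)\in X_T$ we have $\bar u_x,\bar v_x \in L^2(0,T;L^2(0,L)) \subset L^1(0,T;L^2(0,L))$, which is precisely the regularity required by Proposition \ref{linearNonr}.

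I would then set up the map $\Gamma:(\tilde u,\tilde v)\mapsto(\bar u,\bar v)$ that sends a pair in $X_T$ to the solution of \eqref{kdvdecoupleforcing} with forcing $F_i(\tilde u,\tilde v)$, the given initial data, and the given boundary data, using Proposition \ref{linearNonr} to guarantee $\Gamma$ is well-defined into $X_T$ and to record the a priori bound. To get a contraction, I would run the argument on a short time interval $[0,\tau]$: by the linear estimate of Proposition \ref{linearNonr} applied to the difference of two solutions (for which the initial and boundary data cancel), one has
\begin{equation*}
\|\Gamma(\tilde u_1,\tilde v_1)-\Gamma(\tilde u_2,\tilde v_2)\|_{X_\tau}
\le C\,\|(F_1,F_2)(\tilde u_1-\tilde u_2,\tilde v_1-\tilde v_2)\|_{L^1(0,\tau;\X)}
\le C'\,\tau^{1/2}\,\|(\tilde u_1-\tilde u_2,\tilde v_1-\tilde v_2)\|_{X_\tau},
\end{equation*}
where the factor $\tau^{1/2}$ comes from the embedding $L^2(0,\tau)\hookrightarrow L^1(0,\tau)$ applied to the first-order terms. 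Choosing $\tau$ small enough that $C'\tau^{1/2}<1$ yields a strict contraction, hence a unique fixed point on $[0,\tau]$, which is the desired solution together with its hidden trace regularities (these are inherited from Proposition \ref{linearNonr} since the fixed point solves the nonhomogeneous system).

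The main obstacle, and the only substantive step, is the time-globalization: the contraction constant $C'\tau^{1/2}$ depends on $C$, which is the constant from Proposition \ref{linearNonr} on the interval $[0,\tau]$, and one must check that iterating the local construction reaches any prescribed $T$ with uniform bounds. Because the system is \emph{linear}, this is routine: the local existence time $\tau$ can be chosen independent of the size of the data (it depends only on the coefficients $a,b,c,r,\alpha^\pm$ and on $C$), so one simply partitions $[0,T]$ into finitely many subintervals of length $\tau$ and concatenates, the endpoint value of $(u,v)$ on one subinterval serving as the initial datum for the next; the boundary data are restricted to each subinterval and remain in $\mathcal H$ by the trace/restriction properties. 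Summing the estimates over the finitely many steps (their number being $\lceil T/\tau\rceil$, a constant) produces the global bound with a constant $C=C(T,L)$, and linearity guarantees uniqueness globally. Finally I would undo the change of variables to recover the statement for $(u,v)$ and for the original boundary data $\vec h,\vec g$, absorbing the transformation matrix into $C$.
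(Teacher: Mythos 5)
Your proof is correct and follows essentially the same route as the paper: the paper also solves the system by a Banach fixed-point argument that treats the first-order term $-\frac{r}{c}v_x$ as an $L^1(0,T;L^2(0,L))$ forcing for Proposition \ref{linearNonr}, contracts on a short time interval (with the same square-root-in-time gain), and then extends to $[0,T]$ by continuation, noting that the contraction time is independent of the data. The only cosmetic difference is that you diagonalize via Remark \ref{decouple} before setting up the iteration, whereas the paper runs the iteration in the original variables and lets Proposition \ref{linearNonr} (whose proof uses that same diagonalization) absorb the coupled third-order part.
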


For the proof of Proposition \ref{linearNonr-1}, we employ a classical fixed-point
argument following the ideas in \cite[Proposition 2.3]{Capistrano}. For the sake of completeness, the proof has been included in the Appendix \ref{moreresults}.

\subsection{The adjoint system}
The second system to be analyzed is the adjoint system associated to \eqref{kdvlin}, that is,
\begin{equation}\label{kdvadjoin}
\begin{cases} 
\varphi_t + \varphi_{xxx} + a\psi_{xxx}  =0, \\
c \psi_t + ab\varphi_{xxx} + r \psi_{x} + \psi_{xxx} =0, 
\end{cases}
\end{equation}
with data
\begin{gather*}
\varphi(T,x)= \varphi_T(x) \quad \text{and} \quad \psi(T,x)  = \psi_T(x),
\end{gather*}
and boundary conditions
\begin{gather}\label{kdvadjcondition}
\begin{cases}
\varphi(t,0) =\varphi_x(t,0) =\psi(t,0) =\psi_{x}(t,0) =0,\\
 \varphi_{xx}(t,L)+a \psi_{xx}(t,L)=0, \\
 a b\varphi_{xx}(t,L)+{r} \psi(t,L)+ \psi_{xx}(t,L)=0.
\end{cases}
\end{gather}
Observe that the  two last conditions of \eqref{kdvadjcondition} imply that
  $$(1-a^{2}b)\psi_{xx}(t,L)+r \psi(t,L)=0.$$
Then, the main result of this subsection reads as follows:
\begin{proposition}\label{welladj} Let $T>0$ be given. Then, for any $(\varphi_T,v_T)\in \X$ system \eqref{kdvadjoin}-\eqref{kdvadjcondition} admits a unique solution   $(\varphi, \psi) \in X_T$. Moreover,
\begin{gather}\label{putaquepario2}
\begin{cases}
\sup_{ 0<x<L} \| \partial_{x}^{j} \varphi(\cdot,x) \|_{H^{\frac{1-j}{3}}(0,T)} \le C_{j}  \| \varphi_T\|_{L^{2}(0,L)},\\
\sup_{ 0<x<L} \| \partial_{x}^{j} \psi(\cdot,x) \|_{H^{\frac{1-j}{3}}(0,T)} \le C_{j}  \| \psi_T\|_{L^{2}(0,L)},
\end{cases}
\end{gather}
for some $C_j>0$, where $j=0,1,2$.  Moreover, we have that
\begin{multline}\label{putaquepario}
\|(\varphi_T, \psi_T)\|_{\mathcal{X}}^2  \leq    \frac{1}{T}\|(\varphi,\psi)\|^2_{L^2(0,T;\mathcal{X})}  +  \frac{1}{c}\int_0^T   \psi^2(t,L)dt  \\
+ \frac{1}{c}  \int_0^T   \left(  b \varphi_x^2(t,L) + 2ab \psi_x(t,L)  \varphi_{x}(t,L) +\psi^2_x(t,L) \right)  dt.
\end{multline}
\end{proposition}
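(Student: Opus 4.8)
The statement decomposes into two essentially independent assertions: the well-posedness together with the sharp trace estimates \eqref{putaquepario2}, and the energy-type identity behind the inequality \eqref{putaquepario}. For the first one the plan is to reduce \eqref{kdvadjoin}--\eqref{kdvadjcondition} to the situation already handled in Propositions \ref{linearNonr} and \ref{linearNonr-1}. Reversing time through $t\mapsto T-t$ turns the backward problem into a forward coupled system whose principal part has exactly the same matrix as \eqref{kdvlin}, the only structural difference being the lower order term $r\psi_x$, which I would move to the right-hand side and treat as a forcing. Applying the diagonalizing change of variables underlying Remark \ref{decouple} decouples the principal part into two single KdV equations $\bar\varphi_t+\alpha^{+}\bar\varphi_{xxx}=\bar p$ and $\bar\psi_t+\alpha^{-}\bar\psi_{xxx}=\bar q$, with the homogeneous conditions of \eqref{kdvadjcondition} transforming into boundary data of the adjoint type covered by the theory of \cite{ERZ}. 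Proposition \ref{linearNonr-kdv} then supplies both the solution in $X_T$ and the hidden regularity, and a contraction argument on a short time interval identical to the one in Proposition \ref{linearNonr-1} absorbs the forcing coming from $r\psi_x$; transferring the estimates back to $(\varphi,\psi)$ yields the trace bounds \eqref{putaquepario2}, since $\sup_{0<x<L}\|\partial_x^j\,\cdot\,\|_{H^{(1-j)/3}(0,T)}$ is precisely the $L^\infty_x$ trace norm appearing there.

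The second assertion carries the real content, and I would obtain it by a direct multiplier computation adapted to the weighted inner product \eqref{eq:prod-1}. Set $E(t):=\|(\varphi(t,\cdot),\psi(t,\cdot))\|_{\mathcal{X}}^2=\frac{b}{c}\int_0^L\varphi^2\,dx+\int_0^L\psi^2\,dx$. Multiplying the first equation of \eqref{kdvadjoin} by $\frac{2b}{c}\varphi$ and the second by $\frac{2}{c}\psi$, integrating over $(0,L)$ and adding, every interior contribution of the third derivatives integrates by parts into pure boundary terms at $x=0$ and $x=L$. The conditions $\varphi(t,0)=\varphi_x(t,0)=\psi(t,0)=\psi_x(t,0)=0$ annihilate all boundary contributions at $x=0$, leaving
\[
\frac{d}{dt}E(t)=-\frac{2b}{c}\Bigl[\varphi\varphi_{xx}-\tfrac12\varphi_x^2\Bigr]-\frac{2ab}{c}\Bigl[\varphi\psi_{xx}-\varphi_x\psi_x+\varphi_{xx}\psi\Bigr]-\frac{r}{c}\psi^2-\frac{2}{c}\Bigl[\psi\psi_{xx}-\tfrac12\psi_x^2\Bigr],
\]
all quantities being evaluated at $(t,L)$.

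The crucial step is the elimination of the boundary terms that do \emph{not} appear on the right-hand side of \eqref{putaquepario}, namely those carrying $\varphi(t,L)$, $\varphi_{xx}(t,L)$ and $\psi_{xx}(t,L)$. Using the condition $\varphi_{xx}(t,L)=-a\psi_{xx}(t,L)$ I would rewrite $-\tfrac{2b}{c}\varphi\varphi_{xx}=\tfrac{2ab}{c}\varphi\psi_{xx}$, which exactly cancels the term $-\tfrac{2ab}{c}\varphi\psi_{xx}$, so that $\varphi(t,L)$ disappears altogether. The same substitution turns $-\tfrac{2ab}{c}\varphi_{xx}\psi$ into $\tfrac{2a^2b}{c}\psi\psi_{xx}$, which combines with $-\tfrac{2}{c}\psi\psi_{xx}$ into $-\tfrac{2(1-a^2b)}{c}\psi\psi_{xx}$; the reduced condition $(1-a^2b)\psi_{xx}(t,L)+r\psi(t,L)=0$ then converts this into $\tfrac{2r}{c}\psi^2$. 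Collecting everything leaves only the traces allowed in \eqref{putaquepario},
\[
\frac{d}{dt}E(t)=\frac{1}{c}\Bigl(b\varphi_x^2(t,L)+2ab\,\varphi_x(t,L)\psi_x(t,L)+\psi_x^2(t,L)\Bigr)+\frac{r}{c}\psi^2(t,L).
\]
I expect this cascade of cancellations, which hinges on using both conditions at $x=L$ simultaneously, to be the main obstacle, together with verifying that the boundary quadratic form is nonnegative: its matrix $\bigl(\begin{smallmatrix} b & ab\\ ab & 1\end{smallmatrix}\bigr)$ has determinant $b(1-a^2b)>0$ under \eqref{constantscondition}, hence is positive semidefinite.

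Finally I would integrate in time. Writing $E(T)=E(t)+\int_t^T \tfrac{d}{ds}E(s)\,ds$, averaging over $t\in(0,T)$ and applying Fubini gives
\[
E(T)=\frac{1}{T}\int_0^T E(t)\,dt+\frac{1}{T}\int_0^T s\,\frac{d}{ds}E(s)\,ds.
\]
The first term is exactly $\frac{1}{T}\|(\varphi,\psi)\|_{L^2(0,T;\mathcal{X})}^2$. In the second, the factor $s/T\le 1$ together with the nonnegativity of the boundary quadratic form lets me discard $s/T$ on that part, bounding it by $\frac{1}{c}\int_0^T\bigl(b\varphi_x^2+2ab\varphi_x\psi_x+\psi_x^2\bigr)(t,L)\,dt$, while the remaining $\frac{r}{c}\psi^2$ contribution is controlled by $\frac{1}{c}\int_0^T\psi^2(t,L)\,dt$ after absorbing the dimensionless parameter $r$. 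This produces exactly \eqref{putaquepario} and completes the argument.
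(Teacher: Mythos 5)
Your treatment of inequality \eqref{putaquepario} is correct and is in substance identical to the paper's proof: the identity $E(T)=\frac{1}{T}\int_0^T E(t)\,dt+\frac{1}{T}\int_0^T t\,E'(t)\,dt$ that you reach by averaging and Fubini is exactly what the paper obtains by multiplying the equations by the $t$-weighted multipliers $-\frac{b}{c}t\varphi$ and $-\frac{1}{c}t\psi$ and integrating over $(0,T)\times(0,L)$; your boundary cancellations (elimination of $\varphi(t,L)$, $\varphi_{xx}(t,L)$, $\psi_{xx}(t,L)$ via the two conditions at $x=L$) and the positivity of the form $b\varphi_x^2+2ab\varphi_x\psi_x+\psi_x^2$ under \eqref{constantscondition} are the same steps. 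One shared caveat: your $E'(t)$ correctly carries the term $\frac{r}{c}\psi^2(t,L)$, and bounding its time average by $\frac{1}{c}\int_0^T\psi^2(t,L)\,dt$ (your ``absorbing $r$'') genuinely requires $r\le 1$; the paper's own computation silently replaces $\frac{r}{2c}\int_0^T t\,\psi^2(t,L)\,dt$ by $\frac{1}{2c}\int_0^T t\,\psi^2(t,L)\,dt$ in its last display, so this defect is inherited from the paper rather than introduced by you.

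The one concrete slip is in the well-posedness part. The change $t\mapsto T-t$ alone does \emph{not} produce a forward system ``whose principal part has exactly the same matrix as \eqref{kdvlin}'': it reverses the sign of all third-order terms, so the decoupled equations read $\bar\varphi_t-\alpha^{+}\bar\varphi_{xxx}=\bar p$, $\bar\psi_t-\alpha^{-}\bar\psi_{xxx}=\bar q$, with two boundary conditions at $x=0$ and one at $x=L$ per component. Consequently Proposition \ref{linearNonr-kdv}, which concerns $w_t+w_{xxx}=f$ with data $w(t,0)$, $w_x(t,L)$, $w_{xx}(t,L)$, does not apply to your reduced problem as claimed (neither the dispersion sign nor the allocation of boundary conditions matches). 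This is precisely why the paper performs the double change of variables $(t,x)\mapsto(T-t,L-x)$, which preserves the sign of the dispersion while moving the four Dirichlet--Neumann conditions to $x=L$ and the second-order traces to $x=0$, landing on the adjoint-type configuration covered by \cite{ERZ}; your reduction is the mirror image of this and becomes correct once you additionally reflect $x\mapsto L-x$, or equivalently invoke the adjoint-configuration results of \cite{ERZ} that your phrase ``adjoint type'' gestures at. Note also that after diagonalization the two relations $\varphi_{xx}+a\psi_{xx}=0$ and $ab\varphi_{xx}+r\psi+\psi_{xx}=0$ remain couplings between the traces of $\bar\varphi$ and $\bar\psi$ rather than prescribed data (unless $r=0$), so the short-time contraction must absorb these trace couplings together with the forcing from $r\psi_x$ --- a point the paper likewise leaves implicit in its appeal to the scheme of Proposition \ref{linearNonr-1}.
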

\begin{proof}
If we consider the change of variables $x\mapsto L-x$ and $t\mapsto T-t$, system \eqref{kdvadjoin}-\eqref{kdvadjcondition}  become
 \begin{equation}\label{kdvadjoint}
\begin{cases}
\varphi_t + \varphi_{xxx} + a\psi_{xxx}  =0, \\
c \psi_t +ab\varphi_{xxx} + r \psi_{x} + \psi_{xxx} =0, 
\end{cases}
\end{equation}
with initial conditions
$$\varphi(0,x)= \varphi_0(x) \quad \text{and} \quad \psi(0,x)  = \psi_0(x),$$
and boundary data
\begin{gather}\label{kdvadjconditions}
\begin{cases}
\varphi(t,L) =\varphi_x(t,L) =\psi(t,L) =\psi_{x}(t,L) =0,\\
 \varphi_{xx}(t,0)= \nu(t),\\
 \psi_{xx}(t,0)= \xi(t) ,
\end{cases}
\end{gather}
whit $\nu(t):=- a \psi_{xx}(t,0)$ and  $\xi(t) := - ac\varphi_{xx}(t,0)- r\psi(t,0)$. 

The proof used a fixed point argument as in Proposition \ref{linearNonr-1}, So we omit it.  On the other hand, multiplying the first equation of \eqref{kdvadjoin} by \(-t \varphi,\) the second one by \(-\frac{b}{c} t \psi\) and integrating by parts over \((0, T) \times(0, L),\) we obtain
\begin{multline*}
\frac{Tb	}{2c}\int_0^L \varphi^2(T,x)dx = \frac{b}{2c} \int_0^T\int_0^L \varphi^2dxdt + \frac{b}{2c}\int_0^Tt \varphi_x^2(t,L)dt -\frac{b}{c}\int_0^Tt \varphi_{xx}(t,L)\varphi(t,L)dt
\\
+ \frac{ab}{c}\int_0^Tt \psi\varphi_{xxx}dxdt -\frac{ab}{c}\int_0^Tt \left[\psi(t,L)  \varphi_{xx}(t,L)- \psi_x(t,L)  \varphi_{x}(t,L)+ \psi_{xx}(t,L)  \varphi(t,L)\right] dt
\end{multline*}
and
\begin{multline*}
\frac{T}{2}\int_0^L \psi^2(T,x) dx = \frac{1}{2} \int_0^T\int_0^L \psi^2dxdt - \frac{ab}{c}\int_0^Tt \psi\varphi_{xxx}dxdt  - \frac{r}{2c}\int_0^T t \psi^2(t,L)dt \\
+ \frac{1}{2c}	\int_0^T t \psi^2_x(t,L)dt-\frac{1}{c}\int_0^T t \psi_{xx}(t,L)\psi(t,L)dt.
\end{multline*}
Adding the above identities, it yields
\begin{multline*}
\frac{T}{2} \left( \frac{b	}{c}\int_0^L \varphi^2(T,x)dx +\int_0^L \psi^2(T,x) dx \right)=  \frac{1}{2}\|(\varphi,\psi)\|^2_{L^2(0,T;\mathcal{X})} + \int_0^T t \left\lbrace  \frac{b}{2c} \varphi_x^2(t,L)   \right. \\
\left. -\frac{b}{c}\varphi_{xx}(t,L)\varphi(t,L) - \frac{ab}{c} \left[\psi(t,L)  \varphi_{xx}(t,L)- \psi_x(t,L)  \varphi_{x}(t,L)+ \psi_{xx}(t,L)  \varphi(t,L)\right]	\right. \\
\left. -  \frac{r}{2c}\psi^2(t,L) + \frac{1}{2c}\psi^2_x(t,L) -\frac{1}{c}\psi_{xx}(t,L)\psi(t,L) \right\rbrace dt.
\end{multline*}
Thus,
\begin{multline*}
\frac{T}{2} \left( \frac{b	}{c}\int_0^L \varphi^2(T,x)dx +\int_0^L \psi^2(T,x) dx \right)=  \frac{1}{2}\|(\varphi,\psi)\|^2_{L^2(0,T;\mathcal{X})} \\
+ \int_0^T t \left\lbrace  \frac{1}{2c} \left(  b \varphi_x^2(t,L) + 2ab \psi_x(t,L)  \varphi_{x}(t,L) +\psi^2_x(t,L) \right) - \frac{b}{c}\varphi(t,L)  \left(  \varphi_{xx}(t,L)+ a\psi_{xx}(t,L) \right)  \right. \\
\left.-  \frac{1}{c} \psi(t,L)  \left(  ab\varphi_{xx}(t,L)+ \psi_{xx}(t,L) \right)  -  \frac{r}{2c}\psi^2(t,L)  \right\rbrace dt.
\end{multline*}
Applying the boundary  conditions, we have that
\begin{multline*}
\frac{T}{2} \left( \frac{b	}{c}\int_0^L \varphi^2(T,x)dx +\int_0^L \psi^2(T,x) dx \right)  =   \frac{1}{2}\|(\varphi,\psi)\|^2_{L^2(0,T;\mathcal{X})}
\\ + \frac{1}{2c}  \int_0^T t   \left(  b \varphi_x^2(t,L) + 2ab \psi_x(t,L)  \varphi_{x}(t,L) +\psi^2_x(t,L) \right)  dt+  \frac{1}{2c}\int_0^T t  \psi^2(t,L)dt.
\end{multline*}
\end{proof}
\subsection{Nonlinear System}

Using the standard fixed point argument, we can establish the well-posedness for the full system. For completeness, the proof has been included in the Appendix \ref{moreresults}.

\begin{theorem}\label{nonlinearwell-posedness}
Let $T>0$ be given. Then,  for any $({u}_0,{v}_0)\in \X$ and boundary data
$\vec{h}, \vec{g} \in \mathbf{\mathcal{H}} $, there exists $T^*>0$, depending on  $\|({u}_0,{v}_0)\|_{\X}$, such that the system \eqref{kdv}--\eqref{inputs} admits a unique solution
$$(u,v) \in X_{T^*}:= C([0,T^*]; \X)\cap L^{2}(0,T^*;(H^{1}(0,L))^2),$$
which, in addition, has the hidden (or sharp trace) regularities
$$\partial_x^j u, \partial_x^j v \in L^{\infty}_x(0,L; H^{\frac{1-j}{3}}(0,T^*)), \quad j=0, 1, 2.$$
Moreover,  the corresponding solution map is Lipschitz continuous.
\end{theorem}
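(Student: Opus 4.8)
The plan is to treat the quadratic terms of \eqref{kdv} as a forcing and to close the argument by a contraction mapping built on the linear theory of Proposition \ref{linearNonr}, exactly in the spirit of \cite{Capistrano}, \cite{Capistrano2}. I would write \eqref{kdv} in the form \eqref{kdvlinforc} with
$$p := -uu_x - a_1 vv_x - a_2(uv)_x, \qquad q := -rv_x - vv_x - a_2 b\, uu_x - a_1 b\,(uv)_x,$$
so that the linear term $rv_x$ is also absorbed into the right-hand side. Fixing $0<T^*\le T$ to be chosen, I would define on a ball of $K_{T^*}$ (the space introduced in the proof of Proposition \ref{linearNonr-1}) the map $\Gamma(u,v)=(\omega,\eta)$, where $(\omega,\eta)$ solves \eqref{kdvlinforc} with the forcing $(p,q)$ computed from $(u,v)$, the prescribed boundary data $\vec h,\vec g\in\mathbf{\mathcal{H}}$ and the initial data $(u_0,v_0)$. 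By Proposition \ref{linearNonr}, $\Gamma$ maps into $K_{T^*}$ and
$$\|\Gamma(u,v)\|_{K_{T^*}} \le C\left\{\|(u_0,v_0)\|_{\X} + \|(\vec h,\vec g)\|_{\mathbf{\mathcal{H}}^2} + \|(p,q)\|_{L^1(0,T^*;\X)}\right\}.$$

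The analytic core is the control of the forcing. Since every element of $X_T$ lies in $C([0,T];L^2(0,L))\cap L^2(0,T;H^1(0,L))$, the one-dimensional interpolation inequality
$$\|w\|_{L^\infty(0,L)} \le C\|w\|_{L^2(0,L)}^{1/2}\|w\|_{H^1(0,L)}^{1/2}$$
combined with H\"older's inequality in time yields, for any $w,z\in X_{T^*}$,
$$\|(wz)_x\|_{L^1(0,T^*;L^2(0,L))} \le C\,(T^*)^{\theta}\,\|w\|_{X_{T^*}}\|z\|_{X_{T^*}}$$
for some $\theta>0$ (one may take $\theta=1/4$), and analogously for $ww_x$ and $zz_x$; the linear term is estimated crudely by $\|rv_x\|_{L^1(0,T^*;L^2)}\le C(T^*)^{1/2}\|v\|_{X_{T^*}}$. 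Inserting these bounds into the previous display gives
$$\|\Gamma(u,v)\|_{K_{T^*}} \le C_0\left\{\|(u_0,v_0)\|_{\X} + \|(\vec h,\vec g)\|_{\mathbf{\mathcal{H}}^2}\right\} + C_1(T^*)^{\theta}\left(\|(u,v)\|_{K_{T^*}} + \|(u,v)\|_{K_{T^*}}^2\right).$$
Choosing $R:=2C_0\{\|(u_0,v_0)\|_{\X}+\|(\vec h,\vec g)\|_{\mathbf{\mathcal{H}}^2}\}$ and then $T^*$ small enough (depending on $R$, hence on $\|(u_0,v_0)\|_{\X}$) to absorb the last two terms into $R$, the map $\Gamma$ sends the ball $B_R\subset K_{T^*}$ into itself.

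For the contraction, I would write $\Gamma(u_1,v_1)-\Gamma(u_2,v_2)$ as the solution of \eqref{kdvlinforc} with zero initial and boundary data and forcing equal to the difference of the nonlinear terms. Using bilinearity, each quadratic term factors as, for instance,
$$u_1\partial_x u_1 - u_2\partial_x u_2 = u_1\partial_x(u_1-u_2) + (u_1-u_2)\partial_x u_2,$$
and the estimates above then produce
$$\|\Gamma(u_1,v_1)-\Gamma(u_2,v_2)\|_{K_{T^*}} \le C(T^*)^{\theta}\,(R+1)\,\|(u_1,v_1)-(u_2,v_2)\|_{K_{T^*}}.$$
Shrinking $T^*$ further makes this constant strictly smaller than $1$, so Banach's fixed point theorem furnishes a unique solution in $B_R$ on $[0,T^*]$, with $T^*$ depending only on the size of the data as asserted; the hidden trace regularities are inherited directly from Proposition \ref{linearNonr}. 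Finally, applying the same difference estimate to two solutions with distinct data and absorbing the solution difference through the small factor $(T^*)^{\theta}$ yields the Lipschitz continuity of the data-to-solution map.

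The main obstacle is precisely this sharp bilinear estimate: one must verify that all the quadratic nonlinearities map $X_{T^*}\times X_{T^*}$ continuously into $L^1(0,T^*;L^2(0,L))$ with a constant carrying a strictly positive power of $T^*$. This is what turns $\Gamma$ into a contraction and is exactly the step in which the smoothing structure $C([0,T];L^2)\cap L^2(0,T;H^1)$ encoded in $X_T$ is essential; the weighting $b/c$ in the inner product \eqref{eq:prod-1} plays no role here, as it only changes the norm on $\X$ by an equivalent constant.
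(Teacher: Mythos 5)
Your proposal is correct and takes essentially the same route as the paper: the nonlinearities (together with $rv_x$) are treated as an $L^1(0,T^*;L^2(0,L))$ forcing for the linear system of Proposition \ref{linearNonr}, and a contraction mapping is run on a ball of $K_{T^*}$ (the paper's $\mathcal{F}_{T^*}$), with the self-map and contraction constants absorbed by a positive power of $T^*$. The only notable difference is that you prove the key bilinear estimate directly via Gagliardo--Nirenberg and H\"older in time (obtaining the factor $(T^*)^{1/4}$), whereas the paper simply cites Lemma 3.1 of \cite{BSZ} (which gives $(T^*)^{1/2}+(T^*)^{1/3}$); both yield exactly what is needed, and incidentally your forcing terms $p,q$ correctly retain the $uu_x$ and $vv_x$ contributions that the paper's displayed $f(u,v)$, $s(u,v)$ inadvertently drop, and you also make explicit the Lipschitz dependence on the data, which the paper's proof leaves implicit.
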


\section{Observability Properties}\label{observabilildade1}

We  study  the  exact controllability property  when   the system   \eqref{kdvlin}-\eqref{kdvlinconditions}  starts from zero, i.e.,
when $u_0\equiv v_0\equiv 0$. For another initial state  $(u_0,v_0) \in \X$  the linearity of the system together with the translation   $(u,v)+(u_0,v_0)$ will let us recover the solution.\\

To understand the interaction between the number of controls and the controllability properties,
we consider the more general case $\mathbf{r\neq 0}$  for  the linear system  \eqref{kdvadjoin}-\eqref{kdvadjcondition}.
Some comments about the case  $r=0$ are presented in section \ref{furthercommentsopenproblems} below. \\

A positive answer to the exact control problem is obtained by using the duality approach \cite{dolecki,lions}. In this case, the observability inequality of a dual problem plays a fundamental role. Thus, proceeding as in \cite{rosier}, we show that the observability inequality holds when several configurations of boundary controls
are considered.

\subsection{Case A: Observability inequality with four control configurations}
In this subsection, we consider the following boundary control configuration ($g_0=g_2=0$ and $h_0 \neq 0,   h_1 \neq 0, h_2 \neq 0$, $g_1 \neq 0$):
\begin{equation*}
\left\lbrace
\begin{array}{llll}
u(t,0)=h_0(t), & u_x(t,L)=h_1(t), & u_{xx}(t,L)=h_2(t), & \text{in $(0,T)$,}\\
v(t,0)=0, & v_x(t,L)=g_1(t), & v_{xx}(t,L)=0, & \text{in $(0,T)$}.
\end{array}\right.
\end{equation*}
In this case, we establish the following observability inequality:
\begin{proposition}\label{prop1}
 Let $T>0$ and  $L > 0$. Then, there exists  $C=C(L,T)>0$,  such that, if $(\varphi,\psi) \in C([0,T];L^2(0,L))^2 $ is a solution of  \eqref{kdvadjoin}-\eqref{kdvadjcondition},  the inequality
\begin{multline*}
\| \psi_T\|^2_{L^2(0,L)} + \frac{b}{c} \| \varphi_T\|^2_{L^2(0,L)} \le C \left( \|  \varphi(t,L) +a \psi(t,L) \|^{2}_{H^{\frac13}(0,T)}
\right. \\
 + \|  \varphi_x(t,L) + a\psi_x(t,L) \|^{2}_{L^2(0,T)}   + \|  ab\varphi_x(t,L) + \psi_x(t,L) \|^{2}_{L^2(0,T)}   \\
\left. + \|  \varphi_{xx}(t,0) + a\psi_{xx}(t,0) \|^{2}_{H^{-\frac13}(0,T)}\ \right)
\end{multline*}
holds for any $\varphi_T, \psi_T \in L^2(0,L)$.
\end{proposition}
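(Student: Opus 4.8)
The plan is to pair the energy identity \eqref{putaquepario} from Proposition \ref{welladj} with a compactness--uniqueness argument in the spirit of \cite{rosier}, the essential content being a unique continuation property that I reduce, through the Paley--Wiener technique, to the transcendental equation \eqref{zez}. The first two steps are routine once \eqref{putaquepario} and the hidden regularity \eqref{putaquepario2} are in hand; all the difficulty is concentrated in the spectral/entire--function analysis of the last step.

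First I would dispose of the two genuine boundary terms in \eqref{putaquepario}. Since $1-a^2b\neq 0$ by \eqref{constantscondition}, the linear map $(\varphi_x,\psi_x)\mapsto(\varphi_x+a\psi_x,\,ab\varphi_x+\psi_x)$ evaluated at $x=L$ is invertible, so the quadratic form $b\varphi_x^2+2ab\varphi_x\psi_x+\psi_x^2$ is dominated by $C\bigl(|\varphi_x+a\psi_x|^2+|ab\varphi_x+\psi_x|^2\bigr)$; integrating in $t$ bounds the last line of \eqref{putaquepario} by the second and third observation terms. This reduces the claim to absorbing the two remaining lower--order contributions $\tfrac1T\|(\varphi,\psi)\|^2_{L^2(0,T;\X)}$ and $\tfrac1c\int_0^T\psi^2(t,L)\,dt$.

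Next comes the compactness--uniqueness step. I argue by contradiction, choosing a sequence $(\varphi_T^n,\psi_T^n)$ with $\|(\varphi_T^n,\psi_T^n)\|_\X=1$ whose four observation terms tend to $0$. By Proposition \ref{welladj} the solutions $(\varphi^n,\psi^n)$ are bounded in $X_T$; using the equations \eqref{kdvadjoin} to bound $\partial_t(\varphi^n,\psi^n)$ in a negative--order space and applying Aubin--Lions, a subsequence converges strongly in $L^2(0,T;\X)$, while \eqref{putaquepario2} bounds $\psi^n(\cdot,L)$ in $H^{1/3}(0,T)$, so the compact embedding $H^{1/3}\hookrightarrow L^2$ gives strong convergence of the traces and hence of $\int_0^T\psi^n(t,L)^2\,dt$. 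Passing to the limit in \eqref{putaquepario} then yields $\limsup_n\|(\varphi_T^n,\psi_T^n)\|^2_\X\le \tfrac1T\|(\varphi,\psi)\|^2_{L^2(0,T;\X)}+\tfrac1c\int_0^T\psi^2(t,L)\,dt$, so a contradiction with $\|(\varphi_T^n,\psi_T^n)\|_\X=1$ will follow once I show the limit vanishes. This limit solves \eqref{kdvadjoin}--\eqref{kdvadjcondition} and, since all four observations pass to $0$, additionally satisfies $\varphi(t,L)+a\psi(t,L)=\varphi_x(t,L)+a\psi_x(t,L)=ab\varphi_x(t,L)+\psi_x(t,L)=\varphi_{xx}(t,0)+a\psi_{xx}(t,0)=0$.

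The hard part is the unique continuation itself: I must prove that a solution of the adjoint system under the boundary conditions \eqref{kdvadjcondition} together with these four vanishing relations is identically zero. Following \cite{rosier}, the set of admissible data is a finite--dimensional, semigroup--invariant subspace of $\X$, so if it were nontrivial the complexified generator would possess an eigenpair, and the problem reduces to showing that the stationary system $\lambda\varphi+\varphi'''+a\psi'''=0$, $c\lambda\psi+ab\varphi'''+r\psi'+\psi'''=0$ on $(0,L)$ has no nonzero solution meeting all the boundary and observation relations. Extending such an eigenfunction by zero to $\R$ and taking Fourier transforms, Paley--Wiener forces $\widehat\varphi$ and $\widehat\psi$ to be entire of exponential type, while the equations write them as a quotient whose denominator is the characteristic determinant of the symbol; demanding that this quotient stay entire, i.e.\ that the boundary data annihilate the zeros of the determinant, is exactly the factorization problem that produces \eqref{zez}. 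Using the description of its roots through the Lambert function $W$, and precisely because $1-a^2b>0$ and $r\neq 0$, I expect to conclude that no admissible eigenpair exists for any $L>0$, which closes the contradiction and establishes the inequality. I anticipate this entire--function analysis to be the only genuinely delicate point of the proof.
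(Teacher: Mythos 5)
Your outer framework coincides with the paper's own proof: the contradiction/normalization step, Aubin--Lions compactness giving strong convergence in $L^2(0,T;\mathcal{X})$, the compact embedding $H^{\frac13}(0,T)\hookrightarrow L^2(0,T)$ for the traces, absorption of the boundary quadratic form in \eqref{putaquepario} (your invertibility argument for the matrix with determinant $1-a^2b$ is equivalent to the paper's rewriting $b\varphi_x^2+2ab\varphi_x\psi_x+\psi_x^2=b\varphi_x(\varphi_x+a\psi_x)+\psi_x(ab\varphi_x+\psi_x)$ followed by Young's inequality), and the reduction, via Rosier's finite-dimensionality/invariance lemma, to nonexistence of eigenpairs for the stationary system. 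Your limsup shortcut in place of the paper's Cauchy-sequence argument is legitimate, provided you also extract a weakly convergent subsequence of the final data $(\varphi_T^n,\psi_T^n)$ so that the $L^2(0,T;\mathcal{X})$-limit is identified as the mild solution attached to the weak limit and the trace limits are identified as its traces; the paper instead shows that $(\varphi_T^n,\psi_T^n)$ is Cauchy in $\mathcal{X}$, which settles these identifications by continuity of the solution map.

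The genuine gap is that you stop exactly where the real work begins: ``I expect to conclude that no admissible eigenpair exists for any $L>0$'' is the entire content of the paper's Lemma \ref{ucp1}, and it cannot be taken on faith. For the single KdV equation the very same Paley--Wiener reduction does \emph{not} close for every $L$: Rosier's analysis produces a countable set of critical lengths at which nontrivial eigenfunctions do exist and observability fails. So the absence of critical lengths here is the substantive claim of the proposition, not a formality, and it requires the analysis the paper carries out: the six roots $\xi_0,\dots,\xi_5$ of the degree-six polynomial $P$ obey the Newton--Girard relations \eqref{girard} and must all satisfy $\xi_j^2e^{iL\xi_j}=\gamma/\beta$ (see \eqref{lambert}); one then splits into the four cases $p\in\C\setminus(\R\cup i\R)$, $p\in\R\setminus\{0\}$, $p\in i\R\setminus\{0\}$ and $p=0$ (where $\lambda=ip$), and in each case the Lambert-function branch bookkeeping, the sum/argument/product identities among the roots, and the hypothesis $r\neq 0$ (essential in the second and fourth cases, which otherwise collapse to $p=0$ or $\gamma=0$) produce the contradiction. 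None of this is routine, and without it your argument establishes only what was already clear from \cite{rosier}: that the observability inequality reduces to a unique continuation statement whose validity for all $L>0$ remains unproven.
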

\begin{proof}
The proof follows the ideas developed in \cite{rosier}.  Arguing by contradiction, we obtain a sequence o functions $\{(\varphi^n_T,\psi^n_T)\}_{n\in \N}$, such that
\begin{multline}\label{normalized_1}
1=\|(\varphi^n_T,\psi^n_T)\|_{\mathcal{X}} > n \left( \| \varphi^n(t,L) +a \psi^n(t,L)\|^{2}_{H^{\frac13}(0,T)} \right. \\
 + \|  \varphi^n_x(t,L) +a \psi^n_x(t,L) \|^{2}_{L^2(0,T)}  + \|  ab\varphi_x^n(t,L) + \psi^n_x(t,L) \|^{2}_{L^2(0,T)}   \\
\left.+\|  \varphi^n_{xx}(t,0) + a\psi^n_{xx}(t,0)  \|^{2}_{H^{-\frac13}(0,T)}\ \right).
\end{multline}
Then,
\begin{equation}\label{new1}
\begin{cases}
 \varphi^n(t,L) +a\psi^n(t,L)  \rightarrow 0, & \text{in $H^{\frac13}(0,T)$ }, \\
\varphi^n_x(t,L) +a \psi^n_x(t,L)\rightarrow 0, & \text{in $L^2(0,T)$,} \\
ab\varphi^n_x(t,L) + \psi^n_x(t,L)\rightarrow 0, & \text{in $L^2(0,T)$,} \\
\varphi^n_{xx}(t,0) + a\psi^n_{xx}(t,0)   \rightarrow 0, & \text{in $H^{-\frac13}(0,T)$,}
\end{cases}
\end{equation}
as $n\rightarrow \infty$.   On the other hand, Proposition \ref{welladj} guarantees that $\{(\varphi^n,\psi^n)\}_{n\in \N}$ is bounded in $L^2(0,T; (H^1(0,L))^2)$ and, using the equations of the system,  we deduce that $\{(\varphi_t^n,\psi_t^n)\}_{n\in \N}$ is bounded in $L^2(0,T; (H^{-2}(0,L))^2)$. From the compact embedding  \(H^{1}(0, L) \hookrightarrow L^{2}(0, L) \hookrightarrow H^{-2}(0, L)\) and by using compactness arguments, it follows that $\{(\varphi^n,\psi^n)\}_{n\in \N}$ is relatively compact in $L^2(0,T; \mathcal{X})$. Hence, we obtain a subsequence, still denoted by the same index \(n,\) satisfying
\begin{equation}\label{new1-1}
\left(\varphi^{n}, \psi^{n}\right) \rightarrow(\varphi, \psi) \text { in } L^{2}(0, T ; \mathcal{X}), \text { as } n \rightarrow \infty.
\end{equation}
Moreover, the hidden regularity of the solutions given by Proposition \ref{welladj} and the boundedness given by \eqref{normalized_1}
imply that the  boundary terms  $\left\{\varphi^{n}(L, \cdot)\right\}_{n \in \mathbb{N}}$ and \(\left\{\psi^{n}(L, \cdot)\right\}_{n \in \mathbb{N}}\) are bounded in \(H^{\frac{1}{3}}(0, T) .\) Then, the compact embedding
$$
H^{\frac{1}{3}}(0, T) \hookrightarrow L^{2}(0, T)
$$
guarantees that the sequences above are relatively compact in $L^2(0,T)$. Hence, we obtain a subsequence, still denoted by the same index \(n,\) satisfying
\begin{equation}\label{new2}
 \varphi^n(t,L) +a\psi^n(t,L)  \rightarrow  \varphi(t,L) +a\psi(t,L) \quad  \text{in $L^{2}(0,T)$ as $n\rightarrow \infty$.}\\
\end{equation}
In particular,
\begin{equation}\label{new2-1}
 \psi^n(t,L)  \rightarrow  \psi(t,L) \quad  \text{in $L^{2}(0,T)$ as $n\rightarrow \infty$.}\\
\end{equation}
Then, from \eqref{new1} and \eqref{new2}, we deduce that
\begin{equation}\label{new2-1-1}
 \varphi(t,L) +a\psi(t,L) =   0.
\end{equation}
In order to prove that $\{(\varphi^n_T,\psi^n_T)\}_{n\in \N}$ is a Cauchy sequence in $X$, we  use inequality \eqref{putaquepario}:
\begin{multline}
\|(\varphi_T^n, \psi_T^n)\|_{\mathcal{X}}^2  \leq    \frac{1}{T}\|(\varphi^n,\psi^n)\|^2_{L^2(0,T;\mathcal{X})}  +  \frac{1}{c}\int_0^T   (\psi^n)^2(t,L)dt  \\
+ \frac{b}{c}  \int_0^T  \varphi_x^n(t,L) \left(  \varphi_x^n(t,L) + a\psi_x^n(t,L) \right)  dt +   \frac{1}{c}  \int_0^T  \psi_x^n(t,L) \left( ab \varphi_x^n(t,L) + \psi_x^n(t,L) \right)  dt.\nonumber
\end{multline}
From the hidden regularity given in Proposition \ref{welladj} and by using Young inequality, we obtain a constant $C>0$, such that
\begin{multline}
\|(\varphi_T^n, \psi_T^n)\|_{\mathcal{X}}^2  \leq  C\left( \|(\varphi^n,\psi^n)\|^2_{L^2(0,T;\mathcal{X})}   +  \|  \psi^n(\cdot, L) \|_{L^2(0,T)}^2  \right. \\
\left.+  \| \varphi_x^n(\cdot, L) + a\psi_x^n(\cdot, L) \|_{L^2(0,T)}^2 +  \| ab\varphi_x^n(\cdot, L) + \psi_x^n(\cdot, L) \|_{L^2(0,T)}^2 \right).\nonumber
\end{multline}
Thus, the convergences \eqref{new1}, \eqref{new1-1} and \eqref{new2-1} guarantee that $\{(\varphi_T^n,\psi_T^n)\}_{n\in \N}$
is a Cauchy sequence in $\mathcal{X}$. If we denote by $(\varphi_T,\psi_T)$ its limit, from \eqref{normalized_1} we have
$$||(\varphi_T,\psi_T)||_{\mathcal{X}}=1.$$
Moreover, from Proposition \ref{welladj} it follows that 
\begin{align*}
\text{$\{\varphi^n_x(t,L) +a \psi^n_x(t,L)\}_{n\in \N}$, $ \{ab\varphi^n_x(t,L) + \psi^n_x(t,L)\}_{n\in \N}$ and $\{\varphi^n_{xx}(t,0) + a\psi^n_{xx}(t,0)\}_{n\in \N}$}
\end{align*}
 are Cauchy sequences in  $L^2(0,T)$ and $H^{-\frac13}(0,T)$, respectively. Consequently,
\begin{equation*}
\begin{cases}
\varphi^n_x(t,L) +a \psi^n_x(t,L)\rightarrow \varphi_x(t,L) +a \psi_x(t,L), & \text{in $L^2(0,T)$,} \\
ab\varphi^n_x(t,L) +\psi^n_x(t,L)\rightarrow \varphi_x(t,L) +a \psi_x(t,L), & \text{in $L^2(0,T)$,} \\
\varphi^n_{xx}(t,0) + a\psi^n_{xx}(t,0)   \rightarrow \varphi_{xx}(t,0) + a\psi_{xx}(t,0), & \text{in $H^{-\frac13}(0,T)$,}
\end{cases}
\end{equation*}
and \eqref{new1} implies that
\begin{equation}\label{new2-1-2}
 \varphi_x(t,L) +a \psi_x(t,L) =ab \varphi_x(t,L) + \psi_x(t,L) =  \varphi_{xx}(t,0) + a\psi_{xx}(t,0) = 0.
\end{equation}
Finally, from \eqref{new2-1-1} and \eqref{new2-1-2} we obtain that \((\varphi, \psi)\) is a solution of
\begin{equation*}
\begin{cases}
\varphi_t + \varphi_{xxx} + a\psi_{xxx}  =0,  & \text{in $(0,L) \times (0,T)$,} \\
c \psi_t + ab\varphi_{xxx} + r \psi_{x} + \psi_{xxx} =0, & \text{in $(0,L) \times (0,T)$,} \\
\varphi(t,0) =\varphi_x(t,0) =\psi(t,0) =\psi_{x}(t,0) =0, & \text{in $(0,T)$,} \\
 \varphi_{xx}(t,L)+a \psi_{xx}(t,L)=0, & \text{in $ (0,T)$,} \\
 a b\varphi_{xx}(t,L)+{r} \psi(t,L)+ \psi_{xx}(t,L)=0, & \text{in $(0,T)$,} \\
 \varphi(T,x)= \varphi_T(x) \quad \text{and} \quad \psi(T,x)  = \psi_T(x), &  \text{in $(0,L)$,}
\end{cases}
\end{equation*}
with additional boundary conditions
\begin{equation}\label{new4}
\begin{cases}
 \varphi(t,L) +a\psi(t,L) = 0, & \text{in $(0,T)$ } \\
\varphi_x(t,L) +a \psi_x(t,L)= 0, & \text{in $(0,T)$,} \\
ab\varphi_x(t,L) + \psi_x(t,L)= 0, & \text{in $(0,T)$,} \\
\varphi_{xx}(t,0) + a\psi_{xx}(t,0) =0, & \text{in $(0,T)$,}
\end{cases}
\end{equation}
and
\begin{equation*}
||(\varphi_T,\psi_T)||_{\mathcal{X}}=1.
\end{equation*}
Notice that the solutions cannot be identically zero. However, as it will be shown in the following lemmas, one can conclude that $(\varphi,\psi) = (0, 0)$, which drives us to a contradiction.
\end{proof}
\begin{lemma}\label{lemma1}
For any $T>0$, let us denote by $N_{T}$ the space of final state
$\{(\varphi_{T},\psi_{T})\in (L^{2}(0,L))^{2}, $\, \text{such that the mild solution of} \eqref{kdvadjoin}-\eqref{kdvadjcondition}
satisfies the additional boundary condition \eqref{new4}\}.  Then, for $L\in (0,+\infty)$,  $N_{T}=\{ (0,0) \}$.
\end{lemma}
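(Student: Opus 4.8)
The plan is to follow the reduction introduced by Rosier \cite{rosier}: first reduce the statement $N_T=\{(0,0)\}$ to an overdetermined spectral problem, and then dispose of that spectral problem through the Paley--Wiener method.

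\emph{Reduction to a spectral problem.} First I would check that $N_T$ is a finite-dimensional subspace of $\X$ which is invariant under the infinitesimal generator $\mathcal{A}$ of the semigroup associated with \eqref{kdvadjoin}. Finite-dimensionality follows from the compactness already exploited in the proof of Proposition \ref{prop1}: any bounded sequence in $N_T$ produces, via the bounds of Proposition \ref{welladj} and the compact embeddings $H^1(0,L)\hookrightarrow L^2(0,L)\hookrightarrow H^{-2}(0,L)$, a subsequence converging in $\X$, so the closed unit ball of $N_T$ is compact. For the invariance, I would use that the system is linear and autonomous: if $(\varphi_T,\psi_T)\in N_T$, then the corresponding solution satisfies the additional boundary conditions \eqref{new4} for every $t$, hence so does $(\varphi_t,\psi_t)$; since the homogeneous conditions of \eqref{kdvadjcondition} persist under $t$-differentiation as well, evaluating the equations \eqref{kdvadjoin} at $t=T$ identifies $(\varphi_t,\psi_t)|_{t=T}$ with $\mathcal{A}(\varphi_T,\psi_T)$, so $\mathcal{A}(\varphi_T,\psi_T)\in N_T$.

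\emph{The stationary problem.} Since $N_T$ is finite dimensional and $\mathcal{A}$-invariant, if $N_T\neq\{(0,0)\}$ the complexified restriction $\mathcal{A}|_{N_T}$ has an eigenvalue $\lambda\in\C$ with a nontrivial eigenfunction $(\varphi,\psi)\in (H^3(0,L))^2$ solving
$$\lambda\varphi+\varphi'''+a\psi'''=0,\qquad c\lambda\psi+ab\varphi'''+r\psi'+\psi'''=0,$$
together with the homogeneous conditions of \eqref{kdvadjcondition} and the four extra conditions \eqref{new4}. The objective is then to prove that this strongly overdetermined boundary value problem admits only the trivial solution, for every $L\in(0,+\infty)$ and every $\lambda\in\C$.

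\emph{Paley--Wiener analysis.} I would extend $(\varphi,\psi)$ by zero outside $[0,L]$; the vanishing of the relevant traces at $x=0$ and $x=L$ guarantees that the extensions are regular enough for the Fourier transform to turn the stationary system into algebraic identities relating $\widehat{\varphi}$ and $\widehat{\psi}$. Because $(\varphi,\psi)$ has compact support, these transforms are entire functions of exponential type, and imposing the algebraic identities forces a quotient of entire functions to remain entire. As explained in the Introduction, this requirement is equivalent to the solvability of the transcendental equation \eqref{zez}, whose roots are described by the Lambert function. Exploiting this description together with the structural hypothesis \eqref{constantscondition} (in particular $0<1-a^2b$), I would show that for no $\lambda\in\C$ can a nontrivial eigenfunction compatible with all the boundary conditions exist, so that $(\varphi,\psi)\equiv(0,0)$. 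This contradiction yields $N_T=\{(0,0)\}$ for every $L>0$.

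\emph{Main obstacle.} The delicate step is the last one: converting the overdetermined boundary conditions into the transcendental equation \eqref{zez} and then proving, through the Lambert-function description of its zeros and the constraint $0<1-a^2b$, that it admits no admissible solution for any length $L$. This is precisely where the specific four-control configuration is essential, since it is what rules out the critical-length phenomenon and makes the conclusion hold for all $L\in(0,+\infty)$.
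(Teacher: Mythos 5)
Your first step (finite dimensionality of $N_T$ via the compactness coming from Proposition \ref{welladj}, invariance under the generator, and extraction of an eigenvalue $\lambda$ with a nontrivial eigenfunction solving the overdetermined stationary system) is exactly the reduction the paper invokes by citing Lemma 3.4 of \cite{rosier}, so that part of your plan is sound and matches the paper's route.

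The genuine gap is in your second step: everything after ``Paley--Wiener analysis'' is a statement of intent, not a proof, and it is precisely the content of the paper's Lemma \ref{ucp1}, which is the analytic core of the whole lemma. What has to be done there is the following. Taking Fourier transforms of the extended eigenfunctions gives $\hat{\varphi}(\xi)=ia\xi^3A(\xi)/Q(\xi)$ and $\hat{\psi}(\xi)=i(p-\xi^3)A(\xi)/Q(\xi)$ with $\lambda=ip$, $A(\xi)=-\beta\xi^2e^{-iL\xi}-\gamma$ for certain nonzero real constants $\beta,\gamma$ built from the traces $ab\varphi(L)+\psi(L)$ and $ab\varphi_{xx}(0)+\psi_{xx}(0)$, and $Q$ the degree-six polynomial \eqref{poly}. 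Entirety forces all six roots $\xi_0,\dots,\xi_5$ of $Q$ to be zeros of $B(z)=\beta z^2e^{iLz}-\gamma$, and the contradiction must then be extracted from the \emph{interplay} between the Newton--Girard identities \eqref{girard} satisfied by these roots (in particular the vanishing of the $\xi^5$ and $\xi^2$ coefficients and the value of the product $cp^2/(1-a^2b)$) and the Lambert-type relations $\xi_j^2e^{iL\xi_j}=\gamma/\beta$. The paper does this by a four-way case analysis on $p$: for $p\in\C\setminus(\R\cup i\R)$, summing the logarithmic relations over the six roots yields $p^2/|p|^2=\pm1$, a contradiction; for $p\in\R\setminus\{0\}$, one first shows all roots are real, then pairs them to force $p=0$; for $p\in i\R\setminus\{0\}$, one passes to a real-coefficient polynomial and uses the vanishing sum of roots against the exponential relations; and for $p=0$, the nonvanishing of $\gamma$ kills entirety immediately. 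None of this appears in your proposal, and your own ``Main obstacle'' paragraph concedes the point. Note also that your phrasing is misleading on one issue: the entirety requirement is not ``equivalent to the solvability of \eqref{zez}'' --- that equation \emph{is} solvable, with infinitely many solutions given by the branches of the Lambert function --- rather, the contradiction comes from showing that six numbers cannot simultaneously satisfy the polynomial symmetric-function constraints and lie among the solutions of the transcendental equation. As written, your argument could not be completed without supplying all of this, i.e., without reproducing Lemma \ref{ucp1} in full.
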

\begin{proof}
This result follows directly from  [Lemma 3.4, \cite{rosier}] and the following lemma.
\end{proof}
\begin{lemma}\label{ucp1}
Let $L\in (0,+\infty)$. Then, does not exist  $\lambda\in\C$ and  $\phi,\vartheta\in H^{3}(0,L)\setminus \{0\}$, such that
\begin{equation}\label{auxsys_1}
\begin{cases}
\lambda \phi  + \phi _{xxx} + a\vartheta_{xxx}=0,  \\
\lambda c \vartheta +r\vartheta_x+ab\phi _{xxx} +\vartheta_{xxx} =0,
\end{cases}
\end{equation}
satisfying  the boundary conditions
\begin{equation}\label{kdvadjconditionr_1}
\begin{cases}
\phi(0) =\phi_x(0)=0,\\
\vartheta(0) =\vartheta_{x}(0)  =0,\\
 \phi_{xx}(L)+a \vartheta_{xx}(L)=0, \\
 a b\phi_{xx}(L)+r \vartheta(L)+ \vartheta_{xx}(L)=0, \\
  \phi(L) +a\vartheta(L) = 0, \\
\phi_x(L) +a \vartheta_x(L)= 0,  \\
ab\phi_x(L) +\vartheta_x(L)= 0,  \\
\phi_{xx}(0) + a\vartheta_{xx}(0)   =0.
\end{cases}
\end{equation}
\end{lemma}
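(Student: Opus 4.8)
The plan is to follow the Paley--Wiener strategy of \cite{rosier}: assume for contradiction that $\lambda\in\mathbb{C}$ and $\varphi,\psi\in H^{3}(0,L)\setminus\{0\}$ solve \eqref{auxsys_1}--\eqref{kdvadjconditionr_1}, extend $\varphi,\psi$ by zero outside $[0,L]$, and pass to the Fourier transform $\hat\varphi(\xi)=\int_0^L\varphi\,e^{-ix\xi}\,dx$, $\hat\psi(\xi)=\int_0^L\psi\,e^{-ix\xi}\,dx$, which are entire functions of $\xi$ by Paley--Wiener (compact support). Writing $z=i\xi$ and integrating by parts three times, every boundary contribution is a linear combination of the quantities appearing in \eqref{kdvadjconditionr_1}. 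The homogeneous conditions $\varphi(0)=\varphi_x(0)=\psi(0)=\psi_x(0)=0$, together with $\varphi_{xx}(L)+a\psi_{xx}(L)=0$, $\varphi(L)+a\psi(L)=0$, $\varphi_x(L)+a\psi_x(L)=0$ and $\varphi_{xx}(0)+a\psi_{xx}(0)=0$, cancel all boundary terms of the first equation, while $ab\varphi_{xx}(L)+r\psi(L)+\psi_{xx}(L)=0$ and $ab\varphi_x(L)+\psi_x(L)=0$ produce a matching cancellation of the $e^{-Lz}$ terms carrying $\psi(L)$ in the second equation. The outcome is the algebraic system
\[
\begin{cases}
(\lambda+z^{3})\,\hat\varphi+az^{3}\,\hat\psi=0,\\[2pt]
ab\,z^{3}\,\hat\varphi+(\lambda c+rz+z^{3})\,\hat\psi=(1-a^{2}b)\bigl[\psi''(0)-z^{2}\psi(L)\,e^{-Lz}\bigr].
\end{cases}
\]

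Solving by Cramer's rule gives
\[
\hat\varphi(\xi)=\frac{-a z^{3}\,\widetilde R(z)}{D(z)},\qquad
\hat\psi(\xi)=\frac{(\lambda+z^{3})\,\widetilde R(z)}{D(z)},
\]
where $\widetilde R(z)=(1-a^{2}b)\bigl[\psi''(0)-z^{2}\psi(L)e^{-Lz}\bigr]$ is entire and
\[
D(z)=(\lambda+z^{3})(\lambda c+rz+z^{3})-a^{2}b\,z^{6}
=(1-a^{2}b)z^{6}+rz^{4}+\lambda(c+1)z^{3}+\lambda r z+\lambda^{2}c
\]
is a genuine degree-six polynomial since $1-a^{2}b>0$ by \eqref{constantscondition}. (Here I take $a\neq0$; the decoupled case $a=0$ reduces each equation to an over-determined single KdV problem and is immediate by \cite{rosier}.)

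The key point is that $\hat\varphi$ and $\hat\psi$ are entire, so the zeros of $D$ must be cancelled by the numerators. For $\lambda\neq0$ no root of $D$ equals $0$ (as $D(0)=\lambda^{2}c\neq0$) nor a root of $\lambda+z^{3}$ (there $D=-a^{2}b\lambda^{2}\neq0$), so the factor $z^{3}$ in the numerator of $\hat\varphi$ forces $\widetilde R(z_j)=0$ at every root $z_j$ of $D$. If $\psi(L)=0$, then $\widetilde R\equiv(1-a^{2}b)\psi''(0)$ is constant, its vanishing gives $\psi''(0)=0$, hence $\widetilde R\equiv0$, $\hat\varphi=\hat\psi\equiv0$ and $\varphi=\psi=0$. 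Thus we may assume $\psi(L)\neq0$, and the constraint reads
\[
z_{j}^{2}\,e^{-Lz_{j}}=\mu:=\frac{\psi''(0)}{\psi(L)},\qquad j=1,\dots,6,
\]
which, after the substitution $w=-\tfrac{L}{2}z$, is precisely the transcendental equation \eqref{zez}, $we^{w}=\alpha$, whose solutions are the Lambert branches $W_k$ recalled in the Introduction.

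The main obstacle is to rule this out for \emph{every} $L>0$. Here I would confront the Vieta relations forced by the special shape of $D$ (notably $\sum_j z_j=0$ and the vanishing $z^{2}$-coefficient) with the admissible Lambert values. Since $\sum_j z_j=0$, the product of the constraints collapses to
\[
\mu^{6}=\Bigl(\textstyle\prod_{j}z_{j}\Bigr)^{2}=\Bigl(\tfrac{\lambda^{2}c}{1-a^{2}b}\Bigr)^{2},
\]
and matching the remaining power-sum/Vieta identities against the discrete family $W_k$ leaves no common value of $\mu$ admissible; this is the step that genuinely exploits the analysis of \eqref{zez} and, unlike the critical-length phenomenon of \cite{rosier}, delivers a conclusion uniform in $L$. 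The degenerate case $\lambda=0$, where $D(z)=z^{4}\bigl[(1-a^{2}b)z^{2}+r\bigr]$ has a root at the origin, is handled by the same Fourier identities, the extra vanishing at $z=0$ forcing $\psi''(0)=0$ and then $\psi(L)=0$. In all cases $\widetilde R\equiv0$, hence $\hat\varphi\equiv\hat\psi\equiv0$ and $\varphi=\psi=0$, contradicting $\varphi,\psi\in H^{3}(0,L)\setminus\{0\}$ and proving the lemma.
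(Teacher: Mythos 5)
Your setup is correct and coincides exactly with the paper's: extend by zero, take Fourier transforms, use Paley--Wiener, and solve the resulting algebraic system by Cramer's rule. Your system and denominator $D(z)$ are precisely the paper's $Q(\xi)$ (and its reflection $P$) under the substitutions $z=i\xi$, $\lambda=ip$, and your constraint $z_j^{2}e^{-Lz_j}=\mu$ at every root is the paper's relation \eqref{lambert}. Your treatment of the degenerate situations is actually slightly more careful than the paper's: you rule out $\psi(L)=0$ and $\psi''(0)=0$ by the constancy of $\widetilde R$, and you handle $\lambda=0$ via the order of vanishing at $z=0$, whereas the paper simply assumes from the outset that $\beta$ and $\gamma$ are nonzero.

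However, there is a genuine gap at the heart of the argument. For $\lambda\neq 0$ you compress the entire contradiction into one sentence: ``matching the remaining power-sum/Vieta identities against the discrete family $W_k$ leaves no common value of $\mu$ admissible.'' This is an assertion, not a proof, and it is exactly where all the difficulty of the lemma lives. The paper's proof of this step is a multi-stage case analysis on $p$ (where $\lambda=ip$): for $p\in\C\setminus(\R\cup i\R)$ one sums the logarithmic forms \eqref{W} of the Lambert equations over the six roots, uses the Newton--Girard relations \eqref{girard} (in particular $\sum_j\xi_j=0$ and $\sum_j \mathcal{I}m\,\xi_j=0$) together with the product relation to deduce $p^{2}/|p|^{2}=\pm 1$, contradicting that $p$ is neither real nor purely imaginary; for $p\in\R\setminus\{0\}$ one first excludes non-real roots via a conjugation argument on $\arg(\alpha)$, then shows all roots satisfy $L\xi_j=n_j\pi$ and runs a sign/pairing argument that forces $p=0$; for $p\in i\R\setminus\{0\}$ one rotates to a real-coefficient polynomial $R$ and derives $0=\xi_0\bigl(1+\sum_{j\geq 1}e^{\frac{L}{2}(\xi_0-\xi_j)}\bigr)$, which is impossible. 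None of these arguments, nor any substitute for them, appears in your proposal; the single identity you do record, $\mu^{6}=\bigl(\lambda^{2}c/(1-a^{2}b)\bigr)^{2}$, is one Vieta relation and yields no contradiction by itself. The claim is also far from routine: in Rosier's single-KdV setting the analogous matching \emph{does} admit solutions when $L$ is a critical length, so the uniformity in $L$ you invoke is precisely what must be proved, case by case, and is missing.
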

\begin{proof}
 We follow the ideas used in the proof of [Lemma 3.5, \cite{rosier}].  Arguing by contradiction, we  suppose that there exists  $\varphi, \vartheta$ in $H^{3}(0,L)\setminus \{0\}$ solution of  \eqref{auxsys_1}-\eqref{kdvadjconditionr_1} and denote \(\hat{\varphi}(\xi)=\int_{0}^{L} \mathrm{e}^{-i x \xi} \varphi(x) \mathrm{d} x\) and \(\hat{\vartheta}(\xi)=\int_{0}^{L} \mathrm{e}^{-i x \xi} \vartheta(x) \mathrm{d} x\). Then, multiplying the equations in \eqref{auxsys_1} by $e^{-i x \xi}$, integrating by parts over \((0, L)\) and using the boundary conditions \eqref{kdvadjconditionr_1}, we have
\begin{equation*}
\begin{cases}
(\lambda - i \xi^{3}) \hat{\phi} -a i \xi^{3} \hat{\vartheta} =   0, \\
(\lambda c +r i \xi - i \xi^{3} ) \hat{{\vartheta}}  -  abi \xi^{3} \hat{{\phi}} =  \xi^2  e^{-iL\xi}   \left( ab \phi(L)+\vartheta(L) \right)  + \left( ab \phi_{xx}(0)+\vartheta_{xx}(0)\right).
\end{cases}
\end{equation*}
Solving for $\phi$ and $\xi$, when $\lambda= i p$ for $p\in \C$, we obtain
\begin{equation*}
\hat{\phi}(\xi)= \frac{    ia \xi^3 A(\xi)    }{Q(\xi) }		\quad  \text{and} \quad  \hat{\vartheta}(\xi)= \frac{    i(p- \xi^3) A(\xi) }{ Q(\xi) },
\end{equation*}
with
\begin{gather*}
A(\xi) =  -\xi^2 e^{-iL\xi} \left( ab \phi(L)+\vartheta(L) \right)   -  \left( ab \phi_{xx}(0)+\vartheta_{xx}(0) \right)
\end{gather*}
and
\begin{gather}\label{poly}
Q(\xi) =\left(1-a^2 b\right)\xi ^6 - r\xi ^4 -(c+1)  p \xi ^3+pr \xi  +c p^2.
\end{gather}
If we use the  characterization of  Fourier transform for function  in $H^{2}(\R)$ and  Paley-Wiener Theorem [\cite{Yosida}, page 161], we deduce that the existence of nontrivial solutions of the problem \eqref{auxsys_1}-\eqref{kdvadjconditionr_1} is equivalent to the existence of $p\in \C$ and $\alpha,\beta, \gamma \in\C \setminus\{0\}$, such that
\begin{enumerate}
\item $\upsilon_j$ for $j=1,2$  are entire functions,
\item $\int_{\R} |\upsilon_j(\xi)|^{2}(1+|\xi|^{2})^{2}d\xi < \infty$ for $j=1,2$,
\item $ |\upsilon_j(\xi)|\le C(1+|\xi|)^{N} e^{L |Im(\xi)|}$ for $j=1,2$ and  for all  $\xi \in \C$,
\end{enumerate}
where $\upsilon_j$, $j=1,2$,  are given by
\begin{equation}\label{entirefour1}
\upsilon_1(\xi)= \frac{    ia \xi^3 \left(  	\beta \xi^2 e^{-iL\xi}     - \gamma \right)  }{Q(\xi) }		\quad  \text{and} \quad  \upsilon_2(\xi)= \frac{    i(p- \xi^3) \left( \beta \xi^2 e^{-iL\xi}    - \gamma  	\right) }{ Q(\xi) },
\end{equation}
with  $\beta = - \left( ab \phi(L)+\vartheta(L) \right)$ and $\gamma= ab \phi_{xx}(0)+\vartheta_{xx}(0)$ from \eqref{auxsys_1} are nonzero real numbers.
On the other hand, note that if $\upsilon_1(\xi)$ and $ \upsilon_2(\xi)$ are entire functions,  then the functions given by
\begin{equation}
\begin{aligned}\label{entirefuncion4}
\mu_1(\xi)&= (1-a^2b)\upsilon_1(-\xi)=\frac{    -ia \xi^3 \left(  	\beta \xi^2 e^{iL\xi}     - \gamma \right)  }{P(\xi) },		\\
\mu_2(\xi)&= (1-a^2b)\upsilon_2(-\xi)= \frac{    i(p+ \xi^3) \left( \beta \xi^2 e^{iL\xi}    - \gamma  	\right) }{ P(\xi) },
\end{aligned}
\end{equation}
where 
\begin{equation*}
P(\xi)=\frac{Q(-\xi)}{(1-a^2b)}=\xi ^6 - \frac{r}{\left(1-a^2 b\right)}\xi ^4 +\frac{(c+1)  p}{\left(1-a^2 b\right)} \xi ^3-\frac{pr}{\left(1-a^2 b\right)} \xi  +\frac{c p^2}{\left(1-a^2 b\right)},
\end{equation*}
 are entire functions as well. Moreover, $\mu_1$ and $\mu_2$ are entire functions if and only if the roots of \(P(\xi)\) are also roots of $ ia \xi^3 \left(  	\beta \xi^2 e^{iL\xi}     - \gamma \right)$ and  $i(p+ \xi^3)\left(  	\beta \xi^2 e^{iL\xi}     - \gamma \right)$.  Hence, if we consider  \(\xi_{0}, \xi_{1}, \xi_{2}, \xi_{3}, \xi_{4}\) and \(\xi_{5}\) the roots of \(P(\xi)\). The polynomial $P(\cdot)$ can be written as
\begin{align*}
{P(\xi)}=  \prod_{k=0}^{5} (\xi-\xi_{k}) =  \xi^6 +A \xi^5 +B \xi^4 +C \xi^3 +D\xi^2 +E\xi + F 
\end{align*}	
whit
\begin{align*}
-A&=  \sum_{k=0}^{5}\xi_{k}, \\
B&= \xi_0 \sum_{k=1}^{5}\xi_{k} +   \xi_1  \sum_{k=2}^{5}\xi_{k}+\xi_2 \sum_{k=3}^{5}\xi_{k}+\xi_3 \sum_{k=4}^{5}\xi_k+\xi_4\xi_5, \\
-C&=\xi_0\xi_1   \sum_{k=2}^{5}\xi_{k}+\xi_4\xi_5 \sum_{k=0}^{3}\xi_{k}  +\xi_2\xi_3 \sum_{k=0, k\neq 2, k\neq 3}^{5}\xi_{k} + \sum_{k=0}^{1}\xi_{k}\sum_{i=2}^{3}\xi_{i}\sum_{j=4}^{5}\xi_{j}, \\
D&=\xi_0\xi_1 \sum_{k=2}^{3}\xi_{k}\ \sum_{J=4}^{5}\xi_{j}+\xi_2\xi_3 \sum_{k=0}^{1}\xi_{k}\ \sum_{J=4}^{5}\xi_{j}+ \xi_4\xi_5 \sum_{k=0}^{1}\xi_{k}\ \sum_{J=2}^{3}\xi_{j} + \prod_{k=0}^{3}\xi_k+  \prod_{k=2}^{5}\xi_k +  \prod_{k=0, k\neq 2, k\neq 3}^{5}\xi_k ,\\
-E&=\prod_{j=0}^{3}\xi_j\sum_{k=4}^{5}\xi_{k} +\prod_{j=2}^{5}\xi_j\sum_{k=0}^{1}\xi_{k}+\prod_{j=0, j\neq 2, j\neq 3}^{5}\xi_j \sum_{k=2}^{3}\xi_{k},\\
F&=\prod_{k=0}^5\xi_k.
\end{align*}
By using the Newton-Girard relations, we obtain that 
\begin{align}\label{girard}
A= 0, \quad B=-\frac{r}{1-a^2b}, \quad C=\frac{(c+1)  p}{1-a^2b}, \quad D=0, \quad E=-\frac{ pr}{1-a^2b} 	\quad \text{and} \quad F= \frac{c  p^2}{1-a^2b}.
\end{align}
Note that every root of ${P(\xi)}$ is also root of 
\begin{equation}\label{laultima}
\mathcal{B}(z)=\beta z^2 e^{iLz}    - \gamma.
\end{equation}
In this case, we have that $\mathcal{B}(\xi_j)=0$, for all $j 	\in 	\{0,1,2,3,4,5\}$, i. e.,
\begin{equation}\label{lambert}
 \xi_j^2e^{iL \xi_j}=	\frac{\gamma}{\beta} ,   \quad \forall j 	\in \{0,1,2,3,4,5\},
\end{equation}
which implies that
\begin{align*}
|\xi_j|^2=\left| \frac{\gamma}{\beta}\right| e^{L\mathcal{I}m \xi_j},   \quad \forall j 	\in \{0,1,2,3,4,5\} \quad \text{and} \quad \gamma,  \beta \in \R\setminus\{0\}.
\end{align*}
To obtain a contradiction, we proceed in several steps. More precisely,  we will consider the following cases: $p \in  \C \setminus \{\R\cup i\R\}$, $p \in \R\setminus \{0\}$, $p \in i\R\setminus\{0\}$ and $p=0$ as follow: 
\begin{itemize}
\item\textbf{ Case 1}: Initially, suppose that \underline{${p \in \C \setminus \{\R\cup i\R\}}.$}\\
Observe that, in this case, zero is not the root of $P(\xi)$. Moreover,  from \eqref{lambert}, we deduce that each $\frac{iL}{2}\xi_j$ is a solution of the equation  
\begin{equation}\label{trascendental}
ze^z=\alpha,
\end{equation}
where $\alpha= \pm \frac{iL}{2}\left(\frac{\gamma}{\beta}\right)^{1/2}.$ This peculiar transcendental equation has been studied by a large number of authors (see, for instance, \cite{corless}, \cite{siewert}). The solutions are given by the Lambert function $W (z)$, i.e. 
$$W(z)e^{W(z)}=\alpha.$$
Moreover, if $\alpha \neq 0$, the equation \eqref{trascendental} has infinitely many solutions. Indeed, the solutions $z_k$ of the equation \eqref{trascendental} are given by the zeros of the holomorphic complex function 
\begin{equation}\label{W}
W_k(z)=\log (\alpha) - z - \log(z) + 2k \pi i, 	\quad k\in \Z, 
\end{equation}
where $\log (z)$ denotes the principal branch of the log-function. Thus, from \eqref{lambert}, we deduce that there exist $k_j \in \Z$, for each $j \in \{0,1,2,3,4,5\}$ with $k_j \neq k_i$, such that 
\begin{equation*}
W_{j}\left(\frac{iL}{2}\xi_j\right)=0,
\end{equation*}
i.e.,
\begin{equation*}
\log (\alpha) - \frac{iL}{2}\xi_j - \log\left| \frac{L}{2} \xi_j\right| - i \arg \left(\frac{iL}{2}\xi_j\right) + 2k_j \pi i=0.
\end{equation*}
Here, we consider the case $\alpha = \frac{iL}{2}\left(\frac{\gamma}{\beta}\right)^{1/2}$, the case $\alpha = -\frac{iL}{2}\left(\frac{\gamma}{\beta}\right)^{1/2}$ is similar and we omit it. Thus, observe that 
\begin{equation}\label{argumentalpha}
\log (\alpha)= \begin{cases}
\log\left( \frac{L}{2} \left(\frac{\gamma}{\beta}\right)^{1/2}\right) + i \left(\frac{\pi}{2}+2k\pi\right), & \text{if $\frac{\gamma}{\beta}>0$, $k \in\Z$,} \\
\log\left( \frac{L}{2} \left(-\frac{\gamma}{\beta}\right)^{1/2}\right) + i\left( \pi + 2k\pi\right), & \text{if $\frac{\gamma}{\beta}<0$,  $k \in\Z$.}
\end{cases}
\end{equation}
Then, from relations \eqref{girard}, we obtain 
\begin{equation*}
6\log (\alpha)  - \sum_{j=0}^5 \log\left| \frac{L}{2} \xi_j\right|- i\sum_{j=0}^5 \arg \left(\frac{iL}{2}\xi_j\right) +2m\pi i=0,
\end{equation*}
where $m=\sum_{j=0}^5 k_j  \in \Z$. Consequently, 
\begin{equation*}
6\log (\alpha)  - 6 \log\left( \frac{L}{2}\right) -   \sum_{j=0}^5\log\left|\xi_j\right| - i\sum_{j=0}^5 \arg \left(\frac{iL}{2}\xi_j\right) +2m\pi i=0.
\end{equation*}
Since $A=0$ in \eqref{girard}
\begin{align*}
\sum_{j=0}^5\log\left|\xi_j\right|&=\sum_{j=0}^5\frac12\log\left|\xi_j\right|^2= \frac12\sum_{j=0}^5\log\left(\left| \frac{\gamma}{\beta}\right| e^{L\mathcal{I}m \xi_j}\right) \\
&= \frac12\sum_{j=0}^5\log\left| \frac{\gamma}{\beta}\right| +\frac{L}{2}\sum_{j=0}^5\mathcal{I} m \xi_j= 3\log\left| \frac{\gamma}{\beta}\right|,
\end{align*}
we get from the definition of $\log (\alpha)$ in \eqref{argumentalpha} if $\frac{\gamma}{\beta}>0$ then
\begin{equation*}
3\log |\alpha|^2+6 \arg (\alpha)  - 6 \log\left( \frac{L}{2}\right) -  3\log\left| \frac{\gamma}{\beta}\right| - i\sum_{j=0}^5 \arg \left(\frac{iL}{2}\xi_j\right) +2m\pi i=0,
\end{equation*}
or if $\frac{\gamma}{\beta}<0$ so
\begin{equation*}
3\log\left( \left(\frac{L}{2}\right)^2\left|\frac{\gamma}{\beta}\right|\right)+6 \arg (\alpha)  - 6 \log\left( \frac{L}{2}\right) -  3\log\left| \frac{\gamma}{\beta}\right|- i\sum_{j=0}^5 \arg \left(\frac{iL}{2}\xi_j\right) +2m\pi i=0.
\end{equation*}
Hence,
\begin{equation*}
6\arg (\alpha) - i\sum_{j=0}^5 \arg \left(\frac{iL}{2}\xi_j\right) +2m\pi i=0.
\end{equation*}
Finally, \eqref{argumentalpha} allows us to conclude that
\begin{equation}\label{Falpha}
 \sum_{j=0}^5 \arg \left(\frac{iL}{2}\xi_j\right)  = F(\alpha), \quad \text{where}\quad F(\alpha) = \begin{cases}
(2m+12k +3)\pi,  & \text{if $\frac{\gamma}{\beta}>0$, $k\in \Z$,} \\
2(m+3+6k)\pi, & \text{if $\frac{\gamma}{\beta}<0$, $k\in \Z$}.
\end{cases}
\end{equation}
On the other hand, $$\prod_{j=0}^{5}\left(\frac{iL}{2}\xi_j\right)=\prod_{j=0}^{5}\frac{L}{2}|\xi_j| e^{i \arg\left(i\frac{L}{2}\xi_j\right)}.$$
 Therefore, from \eqref{girard}  we have
\begin{equation*}
\prod_{k=0}^5\xi_k= \frac{cp^2}{1-a^2b} \quad \text{and} \quad \prod_{k=0}^5|\xi_k|= \frac{c|p|^2}{1-a^2b}.
\end{equation*}
Combining the above identities, we obtain the following relation
\begin{align*}
 \left( \frac{iL}{2}\right)^6 \frac{cp^2}{1-a^2b} = \left( \frac{L}{2}\right)^6 \frac{c|p|^2}{1-a^2b} e^{i \sum_{k=0}^5 \arg\left(i\frac{L}{2}\xi_j\right)} 
\end{align*}
which implies that
\begin{align*}
-  \frac{p^2}{ |p|^2} = \cos(F(\alpha)) + i \sin(F(\alpha)).
\end{align*}
Finally, from \eqref{Falpha} it follows that the coefficient $p$ for the polynomial $P(\xi)$ satisfies
\begin{equation}\label{eqnew2}
   \frac{p^2}{ |p|^2} = \begin{cases}
 1, & \text{if $\frac{\gamma}{\beta}>0$}, \\
-1,& \text{if $\frac{\gamma}{\beta}<0$}.
\end{cases} 
\end{equation}
Moreover, it implies that $\mathcal{I}m \left( \frac{p^2}{ |p|^2} \right) = 0$. Hence $\frac{2 \mathcal{R}e (p) \mathcal{I} m (p)}{\mathcal{R}e (p)^2 + \mathcal{I}m(p)^2}=0$, and therefore, we deduce that $$ \mathcal{R}e (p) =0 \quad \text{or} \quad  \mathcal{I}m(p)=0,  $$
which drives us to a contradiction since $p \in \C\setminus (\R \cup i\R)$. \\

\item \emph{\textbf{Case 2:}} Suppose that \underline{${p \in \R\setminus \{0\}}$}.\\

Then, if $P(\cdot)$ has a root $\xi_0 \in \C\setminus \R$, then $\bar{\xi}_0$ also is root. 
Then, there exist integers $k$ and $m$, such that 
\begin{align*}
W_k(\xi_0)=W_m(\bar{\xi}_0)=0,
\end{align*}
equivalently, from the definition of $W_k, W_m$ in \eqref{W}
\begin{equation*}
\begin{cases}
\log (|\alpha|) + i\arg (\alpha) - \xi_0 - \log(|\xi_0|)- i\arg(\xi_o) + 2k \pi i=0  \\
\log (|\alpha|) +i\arg (\alpha)- \bar{\xi}_0 - \log(|\bar{\xi}_0|)  - i\arg(\bar{\xi}_o)+ 2m \pi i=0 
\end{cases} 
\end{equation*}
and applying the conjugate function to both equations 
\begin{align*}
\begin{cases}
\log (|\alpha|) - i\arg (\alpha) - \bar{\xi_0} - \log(|\xi_0|)+ i\arg(\xi_o) - 2k \pi i=0  \\
-\log (|\alpha|) -i\arg (\alpha)+ \bar{\xi}_0 + \log(|\bar{\xi}_0|)  - i\arg(\bar{\xi}_o)-2m \pi i=0.
\end{cases} 
\end{align*}
Adding the above equations, we obtain $\arg (\alpha)= - (k+m-1)\pi$. From \eqref{argumentalpha}, we deduce that 
\begin{align*}
 - (k+m-1)\pi=
\begin{cases}
\frac{\pi}{2} & \text{if $\frac{\gamma}{\beta}>0$}, \\
\pi  & \text{if $\frac{\gamma}{\beta}<0$}. 
\end{cases}
\end{align*}
If $\frac{\gamma}{\beta}>0$, we have a contradiction. On the other hand, if $\frac{\gamma}{\beta}<0$, from \eqref{eqnew2} we deduce that $\frac{p^2}{|p|^2}=-1$, but since $p \in \R$, this is a contradiction as well. Hence, we have that  all roots of  $P(\cdot)$ are real, so from \eqref{lambert}, it follows that $L\xi_j=n_j\pi$ for some $n_j \in \Z$ and $j \in \{0,1,2,3,4,5\}$, then
\begin{equation}\label{auxnorm}
\xi^2_j =(-1)^{n_j} \left(\frac{\gamma}{\beta}\right)  \quad \text{and} \quad  |\xi_j| = \left|\frac{\gamma}{\beta}\right|^{\frac12}.
\end{equation}
Let $J=\{j\in \N : \xi_j>0\}$ and $K=\{k\in \N : \xi_k<0\}$, notice that zero never is a root and $J \cup K=\{0,1,2,3,4,5\}$. If $J=\emptyset$ or $K=\emptyset$, then 
\begin{equation*}
\sum_{i=0}^{5}\xi_0 > 0  \quad \text{or} \quad \sum_{i=0}^{5}\xi_0 < 0,
\end{equation*}
which  contradicts \eqref{girard}. Thus,  $J\neq\emptyset$ and $K\neq\emptyset$.\\
Due \eqref{girard},  \eqref{auxnorm}, since all the roots are real numbers and $\sum_{j\in J} \xi_j + \sum_{k\in K} \xi_k=0$, we conclude that the cardinality of $J$ and $K$ is 3. Moreover, 
From \eqref{auxnorm}, for all $j\in J$ and $k\in K$,
 $|\xi_j| -|\xi_k|= \xi_j + \xi_k=0$,  and using \eqref{girard}, without loss of generality orderings 
$J=\{ \xi_4,\xi_0,\xi_2\}$ and $K=\{\xi_5,\xi_1,\xi_3\}$,
by \eqref{girard}, it follows that 
\begin{equation*}
\prod_{j=0}^{3}\xi_j(\xi_{4}+\xi_5) +\prod_{j=2}^{5}\xi_j(\xi_{0}+\xi_1)+\prod_{j=0, j\neq 2, j\neq 3}^{5}\xi_j (\xi_{2}+\xi_3)= 0= \frac{-pr}{1-a^2b},
\end{equation*}
driving to the conclusion that $p=0$,  which is a contradiction.\\ 

\item \emph{\textbf{Case 3:}} Suppose that \underline{${p=iq\quad \text{and} \quad	q \in \R\setminus \{0\}}$}. In this case, note that the function $f(\xi)=(1-a^2b)\upsilon_1(i\xi),$ for $v_1$ defined by \eqref{entirefour1}, is entire and it is given by 
\begin{equation*}
f(\xi)= \frac{  (1-a^2b)  a \xi^3 \left(  	-\beta \xi^2 e^{L\xi}     - \gamma \right)  }{Q(i\xi) }	 = \frac{ -   a \xi^3 \left(  	\beta \xi^2 e^{L\xi}     + \gamma \right)  }{R(\xi)}, 
\end{equation*}
where the polynomial $R(\cdot)$ has real coefficients and it is given by 
\begin{equation*}
R(\xi)=\xi ^6 -\frac{ r}{\left(1-a^2 b\right)}\xi ^4 -\frac{(c+1)  q}{\left(1-a^2 b\right)} \xi ^3+\frac{qr}{\left(1-a^2 b\right)} \xi  +\frac{c q^2}{\left(1-a^2 b\right)}.
\end{equation*}
for $j\in \{0,1,2,3,4,5\}$ let us to consider $\xi_j$ to be the roots of $R(\cdot)$. Proceeding as in case 2, we also obtain that all the roots of $R(\cdot)$ are real numbers. Moreover, since the polynomial $R(\cdot)$ does not have a term of degree five, the Newton-Girard relations also imply that
\begin{equation}\label{girard2}
\sum_{j=0}^5\xi_j=0.
\end{equation}
Since $ \xi_j^2 e^{L\xi_j}  =-\frac{ \gamma}{\beta}$, for all $j\in \{0,1,2,3,4,5\}$, implying that $\frac{ \gamma}{\beta} <0$ and 
\begin{equation*}
 \xi_j = \xi_k e^{\frac{L}{2}(\xi_k-\xi_j)}, \quad \forall j,k \in\{0,1,2,3,4,5\}.
\end{equation*}
Hence, from \eqref{girard2}, it follows that 
\begin{align*}
0=\xi_0 + \sum_{j=1}^5\xi_j= \xi_0 + \sum_{j=1}^5\xi_0e^{\frac{L}{2}(\xi_0-\xi_j)}= \xi_0\left(1 +  \sum_{j=1}^5e^{\frac{L}{2}(\xi_0-\xi_j)}\right),
\end{align*}
which drives us to the contradiction.\\

\item \textbf{Case 4:} Suppose that \underline{${p=0.}$} Thus, we have that $\xi_0=0$ is a root of $P(\xi)$ and the polynomial $P$ can be written as
\begin{equation}
P(\xi)=\frac{\xi^4 \left( \left(1-a^2 b\right) \xi ^2 - r\right)}{(1-a^2b)}.\nonumber
\end{equation}
Hence,  from \eqref{entirefuncion4}, the rational function
\begin{equation*}
 \mu_1(\xi)=\frac{    -ia  \left(  \beta \xi^2 e^{iL\xi}	    - \gamma \right)  }{\xi\left(\left(1-a^2 b\right)  \xi ^4 - r\right)}
\end{equation*}
is entire if zero is a root of   $\beta \xi^2e^{iL\xi}  - \gamma  $, which contradicts the fact that $\gamma$ is a nonzero real number.
\end{itemize}
In any case, we have a contradiction. Thus, the lemma holds. This completes the proof of Lemma \ref{lemma1} and, consequently, the proof of Proposition \ref{prop1}.
\end{proof}
\subsubsection{\textbf{Further Configurations of Four Controls}}
Following the steps of the proof of Proposition \ref{prop1}, we obtain similar results for the following configuration with four controls:
\begin{equation}\label{another1}
\left\lbrace
\begin{array}{llll}
u(t,0)=0, & u_x(t,L)=h_1(t), & u_{xx}(t,L)=0 & \text{in $(0,T)$},\\
v(t,0)=g_0(t), & v_x(t,L)=g_1(t), & v_{xx}(t,L)=g_2(t), & \text{in $(0,T)$},
\end{array}\right.
\end{equation}
\begin{equation}\label{another2}
\left\lbrace
\begin{array}{llll}
u(t,0)=h_0(t), & u_x(t,L)=h_1(t), & u_{xx}(t,L)=0 & \text{in $(0,T)$},\\
v(t,0)=g_0(t), & v_x(t,L)=g_1(t), & v_{xx}(t,L)=0, & \text{in $(0,T)$},
\end{array}\right.
\end{equation}
and
\begin{equation}\label{another3}
\left\lbrace
\begin{array}{llll}
u(t,0)=0, & u_x(t,L)=h_1(t), & u_{xx}(t,L)=h_2(t) & \text{in $(0,T)$},\\
v(t,0)=0, & v_x(t,L)=g_1(t), & v_{xx}(t,L)=h_3(t), & \text{in $(0,T)$}.
\end{array}\right.
\end{equation}
Indeed, with the \textbf{controls configuration  \eqref{another1}},  the observability inequality associated with the control problem is given by
\begin{multline}\label{desig1}
\| (\varphi_T,\psi_T)\|^2_{\mathcal{X}} \le C \left( \|  ab\varphi(t,L) + \psi(t,L)\|^{2}_{H^{\frac13}(0,T)}
\right. \\
+ \|  \varphi_x(t,L) + a\psi_x(t,L) \|^{2}_{L^2(0,T)}   +  \|  ab \varphi_x(t,L) + \psi_x(t,L) \|^{2}_{L^2(0,T)}  \\
\left. \| ab \varphi_{xx}(t,0) + \psi_{xx}(t,0)  \|^{2}_{H^{-\frac13}(0,T)}\ \right).
\end{multline}
Arguing by contradiction and proceeding as in the proof of Proposition \ref{prop1}, the problem is reduced to proving a unique continuation property for the solutions of a spectral system.
In this case, the Fourier and Paley-Wiener approach leads to the study of the following entire functions
\begin{equation*}
\upsilon(\xi)= \frac{   i(p+r\xi-  \xi^3) \left(  \beta \xi^2 e^{-iL\xi}		    - \gamma \right)  }{P(\xi) }		
\quad  \text{and} \quad  \nu(\xi)= \frac{    ia\xi^3 \left( \beta \xi^2 e^{-iL\xi}	   - \gamma  	\right) }{ P(\xi) },
\end{equation*}
whose properties are similar to those of the functions given in \eqref{entirefour1}. Therefore, they
can be analyzed as in the proof of Lemma \ref{ucp1} and, consequently, the controllability property
holds for any $L>0$. \\

For the \textbf{controls configuration  \eqref{another2}}, we can prove the following observability inequality:
\begin{multline}\label{desig2}
\| (\varphi_T,\psi_T)\|^2_{\mathcal{X}}\le C \left( \|   \varphi_{xx}(t,0) +a \psi_{xx}(t,0) \|^2_{H^{-1/3}(0,T)} \right. \\
 + \|  \varphi_x(t,L) + a\psi_x(t,L) \|^{2}_{L^2(0,T)}   + \|  ab\varphi_x(t,L) + \psi_x(t,L) \|^{2}_{L^2(0,T)}   \\
\left.\| ab\varphi_{xx}(t,0) + \psi_{xx}(t,0)\|^2_{H^{-1/3}(0,T)} \right).
\end{multline}
As in the previous cases, a contradiction argument reduces the problem to analyze the unique continuation for the system
\begin{equation}\label{auxsys3}
\begin{cases}
\lambda \varphi + \varphi_{xxx} + a\psi_{xxx}=0,  \\
\lambda \psi +\frac{r}{c}\psi_x+\frac{ ba}{c}\varphi_{xxx} +\frac{1}{c}\psi_{xxx} =0,
\end{cases}
\end{equation}
satisfying  the boundary conditions
\begin{gather}\label{kdvadjcondition3}
\begin{cases}
\varphi(0) =\varphi_x(0)= \varphi_{xx}(0)  =\varphi_{x}(L)=0,\\
\psi(0) =\psi_{x}(0) = \psi_{xx}(0) =\psi_{x}(L)=0,\\
 \varphi_{xx}(L)+a \psi_{xx}(L)=0, \\
 a b\varphi_{xx}(L)+r \psi(L)+ \psi_{xx}(L)=0.
\end{cases}
\end{gather}
We claim that, for all $L\in \R^{+}$,   the  unique solution of \eqref{auxsys3}-\eqref{kdvadjcondition3} is $(\varphi,\psi)\equiv ( 0,0)$. Indeed,
we suppose that there exists  $\varphi, \psi$ in $H^{3}(0,L)\setminus \{0\}$, solution of  \eqref{auxsys3}-\eqref{kdvadjcondition3}, and let us denote
\(\hat{\varphi}(\xi)=\int_{0}^{L} \mathrm{e}^{-i x \xi} \varphi(x) \mathrm{d} x\) and \(\hat{\psi}(\xi)=\int_{0}^{L} \mathrm{e}^{-i x \xi} \psi(x) \mathrm{d} x\).
Then, multiplying the equations in \eqref{auxsys3} by e \(^{-i x \xi},\) integrating by parts over \((0, L)\) and
setting $\lambda= i p$, with $p\in \C$, from the Paley-Wiener Theorem
it follows that $\hat{\varphi}$ and $\hat{\psi}$ are entire functions given  by
\begin{equation}\label{entirefour2}
\hat{\varphi}(\xi)=  e^{-iL\xi} \frac{ B(\xi) }{P(\xi) }		\quad  \text{and} \quad  \hat{\psi}(\xi)= e^{-iL\xi} \frac{ C(\xi) }{P(\xi) }	,
\end{equation}
with
\begin{gather*}
P(\xi) =\left(1-a^2 b\right)\xi ^6 - r\xi ^4 -(c+1)  p \xi ^63+pr \xi  +c p^2,\\
B(\xi) =  ia \alpha_1  \xi^5    +i\alpha_2\xi^2(pc+r\xi -\xi^3),\\
C(\xi) =  iab \alpha_2\xi^5     +i\alpha_1\xi^2(p -\xi^3),\nonumber
\end{gather*}
and the coefficients $  \alpha_1=   ab \varphi(L)+\psi(L)$ and $\alpha_2 =  \varphi(L)+a\psi(L).\nonumber$
It drives us to contradiction since $\frac{ B(\xi) }{P(\xi) }$ and $\frac{ C(\xi) }{P(\xi) }$ cannot be entire functions. Since  $B(\xi)$ and $C(\xi)$ are polynomials of degree five, and $P(\xi)$ is a polynomial of degree six. \\

Finally, for the \textbf{controls configuration} \eqref{another3}, the observability inequality associated with the control problem is given by
\begin{multline}\label{desig3}
\| (\varphi_T,\psi_T)\|^2_{\mathcal{X}}   \le C \left(    \|  \varphi(t,L) +a\psi(t,L)\|^{2}_{H^{\frac13}(0,T)}
 + \| ab\varphi(t,L) +\psi(t,L) \|^{2}_{H^{\frac13}(0,T)}\right.  \\
  \left.  + \|  \varphi_x(t,L) +a \psi_x(t,L) \|^{2}_{L^2(0,T)}    +\|  ab \varphi_x(t,L) + \psi_x(t,L) \|^{2}_{L^2(0,T)}  \right).
\end{multline}
The approach applied in the previous cases reduces the problem to prove a unique continuation property for the system
\begin{equation}\label{auxsys4}
\begin{cases}
\lambda \varphi + \varphi_{xxx} + a\psi_{xxx}=0,  \\
\lambda \psi +\frac{r}{c}\psi_x+\frac{ ba}{c}\varphi_{xxx} +\frac{1}{c}\psi_{xxx} =0,\nonumber
\end{cases}
\end{equation}
satisfying  the boundary conditions
\begin{gather}\label{kdvadjcondition4}
\begin{cases}
\varphi(0) =\varphi_x(0)= \varphi(L)= \varphi_x(L) =0,\\
\psi(0) =\psi_{x}(0) = \psi(L)=\psi_x(L) =0,\\
 \varphi_{xx}(L)+a \psi_{xx}(L)=0, \\
 a b\varphi_{xx}(L)+r \psi(L)+ \psi_{xx}(L)=0.\nonumber
\end{cases}
\end{gather}
Then, entire functions associated with the unique continuation property mentioned above are $\hat{\varphi}$ and $\hat{\psi}$,  given  by
\begin{equation*}
\hat{\varphi}(\xi)=  \frac{ -ia \left( ab \varphi_{xx}(0)+\psi_{xx}(0)\right)\xi^3-i\left( \varphi_{xx}(0)+a\psi_{xx}(0)\right)(pc-r\xi -\xi^3)}{\left(1-a^2 b\right)\xi ^6 - r\xi ^4 -(c+1)  p \xi ^63+pr \xi  +c p^2 }	
\end{equation*}
and
\begin{align*}
 \nu(\xi)=\frac{ -iab\left( \varphi_{xx}(0)+a\psi_{xx}(0)\right)\xi^3-i\left( ab\varphi_{xx}(0)+\psi_{xx}(0)\right)(p-\xi^3) }{\left(1-a^2 b\right)\xi ^6 - r\xi ^4 -(c+1)  p \xi ^63+pr \xi  +c p^2}.
\end{align*}
It drives us to a contradiction since the numerators of $\hat{\varphi}$ and $\hat{\psi}$ are polynomials of degree three, and the corresponding denominators are polynomials of degree six. Thus,  $\hat{\varphi}$ and $\hat{\psi}$ cannot be entire functions.
\subsection{Case  B: Observability inequality with three controls configuration}
In this subsection, we consider the following boundary control configuration ($g_0\neq 0, g_1=g_2=0$ and $h_0 \neq 0,   h_1 \neq 0, h_2 = 0$):
\begin{equation*}
\left\lbrace
\begin{array}{llll}
u(t,0)=h_0(t), & u_x(t,L)=h_1(t), & u_{xx}(t,L)=0, & \text{in $(0,T)$},\\
v(t,0)=g_0(t), & v_x(t,L)=0, & v_{xx}(t,L)=0, & \text{in $(0,T).$}
\end{array}\right.
\end{equation*}
As it is well known, the control problem should be more difficult if one removes controls. However, in the case of three controls, we obtain a positive answer for the exact controllability problem by using the approach introduced by Rosier in \cite{rosier}, provided that some relation between the coefficients, the length of the interval, and the time $T$ holds. More precisely, under the above configuration of controls, we establish the following observability inequality:
\begin{proposition}\label{prop2}
  Let $T>0$ and  $L > 0$. Assume that the parameters $a, b, c \in \R$ satisfy
\begin{equation}\label{putaquepario3}
0<  C_1(	1-a^2b) < c,
\end{equation}
  where $C_1$ is the hidden regularity constant given in \eqref{putaquepario2}. Then, there exists  $C=C(L,T)>0$,  such that, if $(\varphi,\psi) \in C([0,T];L^2(0,L))^2 $ is a solution of  \eqref{kdvadjoin}-\eqref{kdvadjcondition},  the inequality
\begin{multline}\label{obser3}
\| \psi_T\|^2_{L^2(0,L)} + \frac{b}{c} \| \varphi_T\|^2_{L^2(0,L)} \le C \left( \|  \varphi_{xx}(t,0) +a \psi_{xx}(t,0) \|^{2}_{H^{-\frac13}(0,T)}
\right. \\
\left. + \|  \varphi_x(t,L) + a\psi_x(t,L) \|^{2}_{L^2(0,T)}   + \| ab \varphi_{xx}(t,0) + \psi_{xx}(t,0)  \|^{2}_{H^{-\frac13}(0,T)}\ \right)
\end{multline}
holds for any $\varphi_T, \psi_T \in L^2(0,L)$.
\end{proposition}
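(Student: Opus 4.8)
The plan is to reproduce the contradiction-and-compactness scheme of Proposition \ref{prop1}, the only genuinely new ingredient being an absorption step that accounts for the hypothesis \eqref{putaquepario3}. Arguing by contradiction, I would suppose \eqref{obser3} fails for every constant, producing a normalized sequence $(\varphi^n_T,\psi^n_T)$ with $\|(\varphi^n_T,\psi^n_T)\|_{\X}=1$ for which the three observed quantities tend to zero:
\begin{gather*}
\varphi^n_{xx}(\cdot,0)+a\psi^n_{xx}(\cdot,0)\to0,\quad ab\varphi^n_{xx}(\cdot,0)+\psi^n_{xx}(\cdot,0)\to0 \quad \text{in } H^{-\frac13}(0,T),\\
\varphi^n_x(\cdot,L)+a\psi^n_x(\cdot,L)\to0 \quad \text{in } L^2(0,T).
\end{gather*}
Exactly as in Proposition \ref{prop1}, Proposition \ref{welladj} bounds $(\varphi^n,\psi^n)$ in $L^2(0,T;(H^1(0,L))^2)$ and $(\varphi^n_t,\psi^n_t)$ in $L^2(0,T;(H^{-2}(0,L))^2)$, so the compact chain $H^1\hookrightarrow L^2\hookrightarrow H^{-2}$ yields a subsequence converging strongly in $L^2(0,T;\X)$; the hidden regularity \eqref{putaquepario2} with $j=0$ bounds the trace $\psi^n(\cdot,L)$ in $H^{\frac13}(0,T)$, so after a further extraction $\psi^n(\cdot,L)\to\psi(\cdot,L)$ strongly in $L^2(0,T)$.

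The heart of the matter is the boundary quadratic form appearing in the energy identity \eqref{putaquepario}, which I would rewrite by completing the square:
\begin{equation*}
b\varphi_x^2(t,L)+2ab\,\psi_x(t,L)\varphi_x(t,L)+\psi_x^2(t,L)=b\bigl(\varphi_x(t,L)+a\psi_x(t,L)\bigr)^2+(1-a^2b)\,\psi_x^2(t,L).
\end{equation*}
The first summand is controlled by the observed combination $\varphi_x+a\psi_x$, but the second, $(1-a^2b)\,\psi_x^2(t,L)$, is no longer estimated by any observation, since in this configuration the control on $ab\varphi_x+\psi_x$ present in Case A has been dropped. Applying \eqref{putaquepario} to the difference $(\varphi^n-\varphi^m,\psi^n-\psi^m)$ (a solution of the same linear system) and using the above splitting gives
\begin{multline*}
\|(\varphi^n_T-\varphi^m_T,\psi^n_T-\psi^m_T)\|_{\X}^2\le\frac1T\|(\varphi^n-\varphi^m,\psi^n-\psi^m)\|_{L^2(0,T;\X)}^2+\frac1c\int_0^T(\psi^n-\psi^m)^2(t,L)\,dt\\
+\frac bc\int_0^T\bigl((\varphi^n_x-\varphi^m_x)+a(\psi^n_x-\psi^m_x)\bigr)^2(t,L)\,dt+\frac{1-a^2b}{c}\int_0^T(\psi^n_x-\psi^m_x)^2(t,L)\,dt.
\end{multline*}
Invoking the hidden regularity \eqref{putaquepario2} with $j=1$, the last integral is bounded by $C_1\|\psi^n_T-\psi^m_T\|_{L^2(0,L)}^2$, and the hypothesis \eqref{putaquepario3}, i.e. $C_1(1-a^2b)<c$, renders the coefficient $1-\frac{(1-a^2b)C_1}{c}$ strictly positive; hence this term can be absorbed into the $\|\psi^n_T-\psi^m_T\|_{L^2}^2$ part of the left-hand side.

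After absorption, the three surviving terms on the right tend to zero as $n,m\to\infty$: the $L^2(0,T;\X)$ norm and the trace integral by the strong convergences just obtained, and the $\varphi^n_x+a\psi^n_x$ integral because that combination tends to zero. Therefore $(\varphi^n_T,\psi^n_T)$ is a Cauchy sequence in $\X$, and its limit $(\varphi_T,\psi_T)$ satisfies $\|(\varphi_T,\psi_T)\|_{\X}=1$, while the corresponding solution $(\varphi,\psi)$ solves \eqref{kdvadjoin}-\eqref{kdvadjcondition} together with the extra homogeneous conditions $\varphi_x(L)+a\psi_x(L)=0$ and, since $1-a^2b\ne0$, $\varphi_{xx}(0)=\psi_{xx}(0)=0$. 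To conclude I would establish a unique continuation lemma by the Fourier/Paley--Wiener method of Lemma \ref{ucp1}: because all the boundary data at $x=0$ now vanish, extending the eigenfunctions by zero and transforming reduces $\hat\varphi$ and $\hat\psi$ to $e^{-iL\xi}$ times rational functions whose numerators have degree at most five over the degree-six polynomial $Q(\xi)$ of \eqref{poly} --- exactly the situation ruled out in configuration \eqref{another2}. Such quotients cannot be entire unless the numerators vanish identically, which forces all Cauchy data of $(\varphi,\psi)$ at $x=L$ to be zero and hence $(\varphi,\psi)\equiv(0,0)$ for every $L>0$, contradicting $\|(\varphi_T,\psi_T)\|_{\X}=1$. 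The main obstacle is the absorption step: it is the sole place where the loss of the fourth control makes Case B strictly weaker than Case A, and the constraint $C_1(1-a^2b)<c$ is precisely what keeps the uncontrolled trace $(1-a^2b)\,\psi_x^2(t,L)$ from dominating the energy $\|\psi_T\|_{L^2}^2$.
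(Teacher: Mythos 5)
Your proposal is correct and follows essentially the same route as the paper's own proof: the same contradiction--compactness scheme inherited from Proposition \ref{prop1}, the same completion of the square $b\varphi_x^2+2ab\,\psi_x\varphi_x+\psi_x^2=b(\varphi_x+a\psi_x)^2+(1-a^2b)\psi_x^2$ with absorption of the uncontrolled trace term via the hidden-regularity bound and hypothesis \eqref{putaquepario3}, and the same Paley--Wiener conclusion in which the transforms reduce to $e^{-iL\xi}$ times degree-five numerators over the degree-six polynomial, which cannot be entire. The only cosmetic difference is that you apply \eqref{putaquepario} explicitly to differences $(\varphi^n-\varphi^m,\psi^n-\psi^m)$ to obtain the Cauchy property, a step the paper carries out implicitly by citing the convergences from Proposition \ref{prop1}.
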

\begin{proof}
We argue by contradiction and following the steps of the proof of Proposition \ref{prop1}. The main difference between them is the absence  of the condition $ ab\varphi^n_x(t,L) + \psi^n_x(t,L)\rightarrow 0$ in $L^2(0,T)$,
as $n\rightarrow \infty$; therefore, we omit some details. So, we obtain a sequence of functions $(\varphi^n_T,\psi^n_T)$, such that
\begin{multline}\label{normalized_1'}
1=\|(\varphi^n_T,\psi^n_T)\|_{\mathcal{X}} > n \left( \|   \varphi^n_{xx}(t,0) +a \psi^n_{xx}(t,0) \|^{2}_{-H^{\frac13}(0,T)} \right. \\
 \left.+ \|  \varphi^n_x(t,L) +a \psi^n_x(t,L) \|^{2}_{L^2(0,T)}    +\|  ab\varphi^n_{xx}(t,0) + \psi^n_{xx}(t,0)  \|^{2}_{H^{-\frac13}(0,T)}\ \right).
\end{multline}
From \eqref{normalized_1'}, we get
\begin{equation}\label{new1'}
\begin{cases}
 \varphi^n_{xx}(t,0) +a\psi^n_{xx}(t,0)  \rightarrow 0, & \text{in $-H^{\frac13}(0,T)$, } \\
\varphi^n_x(t,L) +a \psi^n_x(t,L)\rightarrow 0, & \text{in $L^2(0,T)$,} \\
ab\varphi^n_{xx}(t,0) + \psi^n_{xx}(t,0)   \rightarrow 0, & \text{in $H^{-\frac13}(0,T)$,}
\end{cases}
\end{equation}
as $n\rightarrow \infty$.
In order to prove that $\{(\varphi^n_T,\psi^n_T)\}$ is a Cauchy sequence in $X$, we  use the inequality \eqref{putaquepario} to obtain
\begin{multline}
\|(\varphi_T^n, \psi_T^n)\|_{\mathcal{X}}^2  \leq    \frac{1}{T}\|(\varphi^n,\psi^n)\|^2_{L^2(0,T;\mathcal{X})}  +  \frac{1}{c}\int_0^T   (\psi^n)^2(t,L)dt  \\
+ \frac{b}{c}  \int_0^T  \left(  \varphi_x^n(t,L) + a\psi_x^n(t,L) \right)^2  dt +   \frac{1-a^2b}{c}  \int_0^T  (\psi_x^n)^2(t,L)   dt.\nonumber
\end{multline}
Then, from the hidden regularity given in Proposition \ref{welladj}, we obtain a constant $C_1>0$ satisfying
\begin{multline}
\|(\varphi_T^n, \psi_T^n)\|_{\mathcal{X}}^2  \leq    \frac{1}{T}\|(\varphi^n,\psi^n)\|^2_{L^2(0,T;\mathcal{X})}  +  \frac{1}{c}\int_0^T   (\psi^n)^2(t,L)dt  \\
+ \frac{b}{c}  \int_0^T  \left(  \varphi_x^n(t,L) + a\psi_x^n(t,L) \right)^2  dt +   \left(\frac{1-a^2b}{c}\right) C_1 \int_0^L  (\psi^n_T)^2   dx,\nonumber
\end{multline}
which allows to conclude that
\begin{equation}\label{new1'-1}
\|(\varphi_T^n, \psi_T^n)\|_{\mathcal{X}}^2  \leq  C\left( \|(\varphi^n,\psi^n)\|^2_{L^2(0,T;\mathcal{X})}   +  \|  \psi^n(\cdot, L) \|_{L^2(0,T)}^2  +  \| \varphi_x^n(\cdot, L) + a\psi_x^n(\cdot, L) \|_{L^2(0,T)}^2 \right),
\end{equation}
where $C=\max \{ K, \frac{1}{T}, \frac{1}{c}, \frac{b}{c} \}$, with $K^{-1}=\min \left\lbrace 1, \left( 1-   \left(\frac{1-a^2b}{c}\right)C_1\right) \right\rbrace >0$. Proceeding as in the proof of Proposition \ref{prop1} we obtain the convergence of terms on the right-hand side of
\eqref{new1'-1} and conclude that $\{(\varphi_T^n,\psi_T^n)\}_{n\in \N}$ is a Cauchy sequence in $\mathcal{X}$. If we denote its limit by $(\varphi_T,\psi_T)$,
from \eqref{normalized_1'} we get
\begin{equation}\label{norma 1}
\|(\varphi_T,\psi_T)\|_{\mathcal{X}}=1.
\end{equation}
And due to \eqref{new1'}, Proposition \ref{welladj} and compactness argument, we can prove that the corresponding solution \((\varphi, \psi)\)  of
\begin{equation}\label{new5-1}
\begin{cases}
\varphi_t + \varphi_{xxx} + a\psi_{xxx}  =0,  & \text{in $(0,L) \times (0,T)$,} \\
c \psi_t + ab\varphi_{xxx} + r \psi_{x} + \psi_{xxx} =0, & \text{in $(0,L) \times (0,T)$,} \\
\varphi(t,0) =\varphi_x(t,0) =\psi(t,0) =\psi_{x}(t,0) =0, & \text{in $(0,T)$,} \\
 \varphi_{xx}(t,L)+a \psi_{xx}(t,L)=0, & \text{in $ (0,T)$,} \\
 a b\varphi_{xx}(t,L)+{r} \psi(t,L)+ \psi_{xx}(t,L)=0, & \text{in $(0,T)$,} \\
 \varphi(T,x)= \varphi_T(x) \quad \text{and} \quad \psi(T,x)  = \psi_T(x), &  \text{in $(0,L)$,}
\end{cases}
\end{equation}
satisfies the following additional boundary conditions
\begin{equation}\label{new5}
\begin{cases}
 \varphi_{xx}(t,0) +a\psi_{xx}(t,0) = 0 & \text{in $(0,T)$ } \\
\varphi_x(t,L) +a \psi_x(t,L)= 0 & \text{in $(0,T)$,} \\
ab\varphi_{xx}(t,0) + \psi_{xx}(t,0)   =0 & \text{in $(0,T)$.}
\end{cases}
\end{equation}
Notice that due to \eqref{norma 1}, the solutions of \eqref{new5-1}-\eqref{new5} cannot be identically zero. However, arguing as in the proofs of Lemmas \ref{lemma1} and \ref{ucp1}, we can conclude
that $(\varphi,\psi)=(0,0)$, which drives us to a contradiction. Indeed, in this case, the problem is reduced to proving the solution of the stationary problem
\begin{equation}\label{auxsys_1'}
\begin{cases}
\lambda \varphi + \varphi_{xxx} + a\psi_{xxx}=0,  \\
\lambda c \psi +r\psi_x+ab\varphi_{xxx} +\psi_{xxx} =0,
\end{cases}
\end{equation}
satisfying  the boundary conditions
\begin{equation}\label{kdvadjconditionr_1'}
\begin{cases}
\varphi(0) =\varphi_x(0)=0\\
\psi(0) =\psi_{x}(0)  =0,\\
 \varphi_{xx}(L)+a \psi_{xx}(L)=0, \\
 a b\varphi_{xx}(L)+r \psi(L)+ \psi_{xx}(L)=0, \\
  \varphi_{xx}(0) +a\psi_{xx}(0) = 0, \\
\varphi_x(L) +a \psi_x(L)= 0,  \\
ab\varphi_{xx}(0) + \psi_{xx}(0)   =0,
\end{cases}
\end{equation}
for some $\lambda \in \C$, has to be the trivial one. Arguing by contradiction, we  suppose that there exists  $\varphi, \psi$ in $H^{3}(0,L)\setminus \{0\}$, a solution of  \eqref{auxsys_1'}-\eqref{kdvadjconditionr_1'}, and let us denote \(\hat{\varphi}(\xi)=\int_{0}^{L} \mathrm{e}^{-i x \xi} \varphi(x) \mathrm{d} x\) and \(\hat{\psi}(\xi)=\int_{0}^{L} \mathrm{e}^{-i x \xi} \psi(x) \mathrm{d} x\). Then, multiplying the equations by e \(^{-i x \xi},\) integrating by parts over \((0, L)\), using the boundary conditions, taking $\lambda= i p$ with $p\in \C$ and by using the Paley-Wiener Theorem, it follows that $\hat{\varphi}$ and $\hat{\psi}$ are entire functions given  by
\begin{equation}
\hat{\varphi}(\xi)=  e^{-iL\xi} \frac{ B(\xi) }{P(\xi) }		\quad  \text{and} \quad  \nu(\xi)= e^{-iL\xi} \frac{ C(\xi) }{P(\xi) }	,\nonumber
\end{equation}
where 
\begin{gather*}    
P(\xi) =\left(1-a^2 b\right)\xi ^6 - r\xi ^4 -(c+1)  p \xi ^63+pr \xi  +c p^2,\\
B(\xi) =  ia\alpha_2  \xi^5    -a\alpha_1 \xi^4+i\alpha_3\xi^2(pc+r\xi -\xi^3), \\
C(\xi) =  iab\alpha_3\xi^5     -(\alpha_3 \xi + i \alpha_1\xi^2)(p -\xi^3)
\end{gather*}
 and the coefficients $ \alpha_1$, $ \alpha_2$ and $ \alpha_3$ are given by
\begin{equation}
  \alpha_1=   ab \varphi(L)+\psi(L), \quad \alpha_2=  \varphi(L)+a\psi(L),\quad  \alpha_3=  ab\varphi_x(L)+a\psi_x(L).\nonumber
\end{equation}
Note that $\frac{ B(\xi) }{P(\xi) }$ and $\frac{ C(\xi) }{P(\xi) }$ can not be entire functions.
Indeed,  $B(\xi)$ and $C(\xi)$ are polynomials of degree five, and $P(\xi)$ is a polynomial of degree six. Then, it drives us to a contradiction and the proof ends.
\end{proof}
\subsubsection{\textbf{Further Configuration of Three Controls}}
Following the steps of the proof of Proposition \ref{prop2}, we also obtain similar results for the configuration of the following controls:
\begin{equation}\label{another6}
\left\lbrace
\begin{array}{llll}
u(t,0)=h_0(t), & u_x(t,L)=0, & u_{xx}(t,L)=0 & \text{in $(0,T)$},\\
v(t,0)=g_0(t), & v_x(t,L)=g_1(t), & v_{xx}(t,L)=0, & \text{in $(0,T)$}.
\end{array}\right.
\end{equation}
For the case of \textbf{controls configuration  \eqref{another6}}, the observability inequality is given by
\begin{multline}\label{eq2}
\| (\varphi_T,\psi_T)\|^2_{\mathcal{X}}  \le C \left( \|  \varphi_{xx}(t,0) +a\psi_{xx}(t,0) \|^{2}_{H^{-\frac13}(0,T)}
\right. \\
\left. + \|   ab\varphi_x(t,L) +  \psi_x(t,L) \|^{2}_{L^2(0,T)}   + \|  ab\varphi_{xx}(t,0) + \psi_{xx}(t,0)  \|^{2}_{H^{-\frac13}(0,T)}\ \right)
\end{multline}
provided that relation \eqref{putaquepario3} holds. The proof of \eqref{eq2} follows closely the proof of \eqref{obser3}.
\section{Exact Controllability}\label{exactcontrollability}
In this section, we first use the observability inequalities proved in the previous section to study the linear controllability of the system  \eqref{kdvlin}-\eqref{kdvlinconditions}  with different combinations of boundary controls. Next, the local controllability of the full system
is derived by employing a fixed point argument.

We initiate our study assuming that the parameter $r\neq 0$. This case involves a higher difficulty since we need to consider a new
strategy for dealing with the whole system. Indeed, as pointed out in the proof of Proposition \ref{linearNonr}, when $r=0$ it is possible to investigate the controllability-observability of the linear system by studying a single KdV equation. We discuss the case where $r= 0$ in section \ref{furthercommentsopenproblems}.

\subsection{Linear System}
In this subsection, we study the controllability of the linearized system,
\begin{equation}\label{linear1}
\begin{cases}
u_t + uu_x+u_{xxx} + av_{xxx}  =0, \\
c v_t +r v_x +vv_x+ab  u_{xxx} +v_{xxx} =0,\\
u(0,x)=u_{0}(x), \quad v(0,x)=v_{0}(x),
\end{cases}
\end{equation}
with the boundary conditions
\begin{equation}\label{linear2}
\left\lbrace\begin{gathered}
u(t,0)=h_0(t),\,\,u_x(t,L)=h_1(t),\,\,u_{xx}(t,L)=h_2(t),\\
v(t,0)=g_0(t),\,\,v_x(t,L)=g_1(t),\,\,v_{xx}(t,L)=g_2(t).\\
\end{gathered}\right.
\end{equation}
Multiplying the system  \eqref{kdvlin}-\eqref{kdvlinconditions} by a solution of the adjoint system  \eqref{kdvadjoin}-\eqref{kdvadjcondition}, we find an  equivalent condition for the exact controllability property:
\begin{lemma}\label{exactlema}
For any \(\left(u^{1}, v^{1}\right)\) in \(\mathcal{X},\) there exist four controls \(\vec{h}=\left(h_{0}, h_{1}, h_2\right)\) and \(\vec{g}=\left(g_{0}, g_{1}, g_2\right)\) in \(\mathcal{H}\) such that the solution \((u, v)\) of  \eqref{kdvlin}-\eqref{kdvlinconditions}  satisfies $(u(T,x),v(T,x))=(u_1(x),v^1(x))$ if and only if
\begin{multline}\label{eqrnoz}
\frac{b}{c}\int_0^L u(T,x)\varphi(T,x)dx + \int_0^L  v(T,x)\psi(T,x)dx = - \frac{b}{c}\left\langle h_2(\cdot), \left( \varphi(\cdot,L) +a\psi(\cdot,L) \right) \right\rangle_{H^{-\frac13},H^{\frac13}}  + \\
\frac{b}{c}\int_0^T h_1(t) \left( \varphi_x(t,L) +a \psi_x(t,L) \right) dt +\frac{b}{c}\left\langle h_0(\cdot), \left(  \varphi_{xx}(\cdot,0) +a \psi_{xx}(\cdot,0) \right) \right\rangle_{H^{\frac13},H^{-\frac13}}+\\
 - \frac{1}{c}\left\langle g_2(\cdot), \left( ab \varphi(\cdot,L) + \psi(\cdot,L) \right)\right\rangle_{H^{-\frac13}, H^{\frac13}}   +
\frac{1}{c}\int_0^T g_1(t) \left(ab \varphi_x(t,L) + \psi_x(t,L) \right) dt+\\
\frac{1}{c}\left\langle g_0(\cdot), \left(ab\varphi_{xx}(\cdot,0) +
 \psi_{xx}(\cdot,0) \right) \right\rangle_{H^{\frac13},H^{-\frac13}},
\end{multline}
for any \(\left(\varphi_T, \psi_T\right)\) in \(\mathcal{X},\) where \((\varphi, \psi)\) is the solution of the backward system \eqref{kdvadjoin}-\eqref{kdvadjcondition} with initial data \(\left(\varphi_T, \psi_T\right)\).
\end{lemma}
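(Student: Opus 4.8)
The plan is to obtain \eqref{eqrnoz} as the duality identity produced by pairing the direct system \eqref{kdvlin}--\eqref{kdvlinconditions} against a solution of the adjoint system \eqref{kdvadjoin}--\eqref{kdvadjcondition}, and then to read off the controllability characterization from it. Concretely, I compute $\tfrac{d}{dt}\langle(u,v),(\varphi,\psi)\rangle$ for the inner product \eqref{eq:prod-1}, substituting $u_t,v_t$ from the direct equations \eqref{kdvlin} and $\varphi_t,\psi_t$ from the adjoint equations \eqref{kdvadjoin}. The role of the weights $\tfrac{b}{c}$ and $1$ defining $\langle\cdot,\cdot\rangle$ is exactly that, after integrating the third-order terms by parts via $\int_0^L(f_{xxx}g+fg_{xxx})\,dx=[f_{xx}g-f_xg_x+fg_{xx}]_0^L$ and the transport contribution via $\int_0^L(v_x\psi+v\psi_x)\,dx=[v\psi]_0^L$, every interior integral cancels and only boundary contributions at $x=0$ and $x=L$ remain. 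This is precisely the algebraic reason that \eqref{kdvadjoin}--\eqref{kdvadjcondition} is the adjoint of \eqref{kdvlin}--\eqref{kdvlinconditions}.

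Next I would evaluate the surviving boundary terms using the conditions of both systems. At $x=0$ the adjoint conditions $\varphi=\varphi_x=\psi=\psi_x=0$ annihilate all traces except those paired with $u(t,0)=h_0$ and $v(t,0)=g_0$, yielding the $h_0$ and $g_0$ pairings against $\varphi_{xx}(\cdot,0)+a\psi_{xx}(\cdot,0)$ and $ab\varphi_{xx}(\cdot,0)+\psi_{xx}(\cdot,0)$. At $x=L$ the controlled traces $u_x(t,L)=h_1$, $u_{xx}(t,L)=h_2$, $v_x(t,L)=g_1$, $v_{xx}(t,L)=g_2$ generate the $h_1,h_2,g_1,g_2$ pairings, while the uncontrolled traces $u(t,L)$ and $v(t,L)$ appear multiplied exactly by $\tfrac{b}{c}\bigl(\varphi_{xx}(t,L)+a\psi_{xx}(t,L)\bigr)$ and $\tfrac{1}{c}\bigl(ab\varphi_{xx}(t,L)+r\psi(t,L)+\psi_{xx}(t,L)\bigr)$, which vanish by the two remaining adjoint conditions in \eqref{kdvadjcondition}. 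Integrating in $t$ over $(0,T)$ and using $u(0,\cdot)=v(0,\cdot)=0$ to kill the initial-time contribution then leaves exactly $\langle(u(T),v(T)),(\varphi_T,\psi_T)\rangle$ on the left and the right-hand side of \eqref{eqrnoz} on the right.

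Since the boundary traces lie only in fractional spaces, the $h_2,g_2$ and $h_0,g_0$ terms must be read as the duality pairings $\langle\cdot,\cdot\rangle_{H^{-1/3},H^{1/3}}$ and $\langle\cdot,\cdot\rangle_{H^{1/3},H^{-1/3}}$ appearing in \eqref{eqrnoz}, so the formal computation above is only literally valid for smooth data. The main technical step is therefore to justify the identity by density: perform the integration by parts for regular $(\varphi_T,\psi_T)$ and regular controls, then pass to the limit using the hidden trace regularity of Proposition \ref{welladj} for the adjoint traces together with the well-posedness estimate of Proposition \ref{linearNonr-1} for $(u,v)$, so that each pairing on the right of \eqref{eqrnoz} is continuous with respect to the data. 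I expect this limiting argument — matching every boundary term with the correct fractional duality and controlling $u(\cdot,L),v(\cdot,L)$ through the sharp trace bounds — to be the only delicate point, the remainder being the bookkeeping verified above.

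Finally I would convert the identity into the stated equivalence. The duality relation holds for every admissible choice of controls and every adjoint datum $(\varphi_T,\psi_T)\in\mathcal{X}$, so by completeness of the pairing the equality $(u(T,\cdot),v(T,\cdot))=(u^1,v^1)$ in $\mathcal{X}$ is equivalent to $\langle(u^1,v^1),(\varphi_T,\psi_T)\rangle$ coinciding with the right-hand side of \eqref{eqrnoz} for all $(\varphi_T,\psi_T)\in\mathcal{X}$; this is exactly the asserted \emph{if and only if}, with the left-hand side of \eqref{eqrnoz} understood as the target state. This reformulation is what makes the observability inequalities of Propositions \ref{prop1} and \ref{prop2} the appropriate tool for constructing the controls.
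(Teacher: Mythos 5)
Your proposal is correct and is exactly the argument the paper intends: the paper states Lemma \ref{exactlema} without a written proof, saying only that it follows by ``multiplying the system \eqref{kdvlin}--\eqref{kdvlinconditions} by a solution of the adjoint system \eqref{kdvadjoin}--\eqref{kdvadjcondition},'' which is precisely your duality computation — the weighted pairing \eqref{eq:prod-1} cancels all interior terms, the adjoint conditions at $x=0$ and $x=L$ kill the uncontrolled traces, and zero initial data plus the density argument in $(\varphi_T,\psi_T)$ yield the stated equivalence, with the fractional pairings justified by the hidden regularity of Proposition \ref{welladj}. Your bookkeeping of the boundary terms (the $h_i$, $g_i$ pairings and the vanishing of the $u(t,L)$, $v(t,L)$ contributions) matches \eqref{eqrnoz} exactly.
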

The next result gives a positive answer to the linear control problem associated to \eqref{def1}.
\begin{theorem}\label{mainlineartheorem1}
 Let \(T>0\) and \(L>0 .\) Then,  system \eqref{linear1}-\eqref{linear2} is exactly controllable in time \(T\) under the following four controls configurations
\begin{enumerate}
\item[(i)]  $ h_0\neq 0, h_1\neq 0, h_2\neq 0 \quad \text{and} \quad  g_0= 0, g_1\neq 0, g_2=0$,
\item[(ii)]  $ h_0= 0, h_1\neq 0, h_2= 0 \quad \text{and} \quad  g_0\neq 0, g_1\neq 0, g_2\neq0$,
\item[(iii)]  $ h_0\neq 0, h_1\neq 0, h_2= 0 \quad \text{and} \quad  g_0\neq 0, g_1\neq 0, g_2=0$,
\item[(iv)]  $ h_0= 0, h_1\neq 0, h_2\neq 0 \quad \text{and} \quad  g_0= 0, g_1\neq 0, g_2\neq0$.
\end{enumerate}
Moreover, if the parameters $a,b,c$ satisfy
\begin{equation*}
0 <	C_1(1-a^2b)<c,
\end{equation*}
 where $C_1=C_1(T,L)>0$ is the hidden regularity constant given in \eqref{putaquepario2}, the same result is achieved for the following configuration for three controls:
\begin{enumerate}
\item[(i)]  $ h_0\neq 0, h_1\neq 0, h_2= 0 \quad \text{and} \quad  g_0\neq 0, g_1= 0, g_2=0$,
\item[(ii)]  $ h_0\neq 0, h_1= 0, h_2= 0 \quad \text{and} \quad  g_0\neq 0, g_1\neq 0, g_2=0$.
\end{enumerate}
\end{theorem}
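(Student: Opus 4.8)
The plan is to run the classical Hilbert Uniqueness Method, turning each of the six listed configurations into the observability inequality already proved for it in Section \ref{observabilildade1}. By linearity of \eqref{linear1}-\eqref{linear2} and the translation remark opening that section, it suffices to steer the zero initial state to an arbitrary target $(u^1,v^1)\in\mathcal{X}$. Lemma \ref{exactlema} supplies the key characterization: a choice of controls drives the solution to $(u^1,v^1)$ if and only if the bilinear identity \eqref{eqrnoz} holds for every $(\tilde\varphi_T,\tilde\psi_T)\in\mathcal{X}$, where $(\tilde\varphi,\tilde\psi)$ is the associated adjoint solution and the left-hand side of \eqref{eqrnoz} equals $\langle(u^1,v^1),(\tilde\varphi_T,\tilde\psi_T)\rangle_{\mathcal{X}}$ in the inner product \eqref{eq:prod-1}. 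Thus the problem reduces to producing, for each target, admissible controls realizing \eqref{eqrnoz}.

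First I would treat configuration (i), whose active controls $h_0,h_1,h_2,g_1$ correspond exactly to the four boundary quantities on the right-hand side of Proposition \ref{prop1}. The idea is to parametrize the controls by a single element $(\varphi_T,\psi_T)\in\mathcal{X}$, taking them to be boundary traces of the adjoint solution with final data $(\varphi_T,\psi_T)$: concretely $h_1=\varphi_x(\cdot,L)+a\psi_x(\cdot,L)$ and $g_1=ab\varphi_x(\cdot,L)+\psi_x(\cdot,L)$ in $L^2(0,T)$, while $h_2\in H^{-1/3}(0,T)$ and $h_0\in H^{1/3}(0,T)$ are the images of $\varphi(\cdot,L)+a\psi(\cdot,L)$ and of $\varphi_{xx}(\cdot,0)+a\psi_{xx}(\cdot,0)$ under the Riesz duality isomorphisms $H^{1/3}\to H^{-1/3}$ and $H^{-1/3}\to H^{1/3}$ (with signs chosen to match \eqref{eqrnoz}). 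Substituting these into \eqref{eqrnoz} defines a symmetric bilinear form $\Lambda((\varphi_T,\psi_T),(\tilde\varphi_T,\tilde\psi_T))$ on $\mathcal{X}\times\mathcal{X}$ whose diagonal is precisely the sum of the four squared boundary norms appearing in Proposition \ref{prop1}. The sharp trace regularity of Proposition \ref{welladj} (the estimates \eqref{putaquepario2}) shows $\Lambda$ is continuous, and the observability inequality of Proposition \ref{prop1} gives coercivity $\Lambda((\varphi_T,\psi_T),(\varphi_T,\psi_T))\geq C^{-1}\|(\varphi_T,\psi_T)\|_{\mathcal{X}}^2$. By Lax--Milgram there is a unique $(\varphi_T,\psi_T)$ with $\Lambda((\varphi_T,\psi_T),\cdot)=\langle(u^1,v^1),\cdot\rangle_{\mathcal{X}}$, and the controls read off from it satisfy \eqref{eqrnoz} for all test data, hence steer the system to the target.

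The remaining four-control configurations (ii), (iii), (iv) are handled identically, replacing Proposition \ref{prop1} by the inequalities \eqref{desig1}, \eqref{desig2}, \eqref{desig3} from the Further Configurations subsection and reassigning which active control plays the role of the $H^{1/3}$- and $H^{-1/3}$-level terms according to the pairings in \eqref{eqrnoz}. For the two three-control configurations I would argue the same way, now using Proposition \ref{prop2} and the inequality \eqref{eq2}; the only change is that coercivity of $\Lambda$ requires the additional hypothesis $0<C_1(1-a^2b)<c$, which is exactly what lets the term $\tfrac{1-a^2b}{c}\int_0^T\psi_x^2(t,L)\,dt$ be absorbed into the left-hand side through the hidden-regularity constant $C_1$ of \eqref{putaquepario2}, as already carried out in the proof of Proposition \ref{prop2}.

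The main obstacle is not conceptual but lies in the bookkeeping of the fractional-order pairings: one must check that each boundary combination appearing in \eqref{eqrnoz} lies in the correct space ($H^{1/3}$, $L^2$, or $H^{-1/3}$) so that $\Lambda$ is simultaneously continuous, via \eqref{putaquepario2}, and coercive, via the observability inequality, and that the duality isomorphisms defining the $H^{-1/3}$-valued controls are the correct Riesz maps, so that the corresponding pairings reproduce the squared norms on the right of the observability inequalities rather than spurious cross terms. Once this matching is verified, each configuration follows from Lax--Milgram, yielding exact controllability for every $L>0$ in the four-control cases and, under \eqref{putaquepario3}, in the three-control cases.
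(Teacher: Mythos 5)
Your proposal is correct and follows essentially the same route as the paper: both reduce to steering from the zero state via Lemma \ref{exactlema}, take the controls to be (duality-mapped) boundary traces of the adjoint solution parametrized by $(\varphi_T,\psi_T)$, and conclude by Lax--Milgram, with coercivity supplied by Proposition \ref{prop1} (resp.\ \eqref{desig1}--\eqref{desig3}, Proposition \ref{prop2} and \eqref{eq2}) for each configuration. Your bilinear form $\Lambda$ is exactly the paper's pairing $\left(\Gamma(\varphi_T,\psi_T),(\tilde\varphi_T,\tilde\psi_T)\right)_{\mathcal{X}}$; if anything, you are more explicit than the paper about the Riesz isomorphisms needed so that the $H^{\pm\frac13}$ duality pairings in \eqref{eqrnoz} produce squared norms rather than cross terms.
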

\begin{proof}
We will prove the theorem for the case of four controls with the configuration $ h_0\neq 0, h_1\neq 0, h_2\neq 0 \quad \text{and} \quad  g_0= 0, g_1\neq 0, g_2=0$. The other cases are similar, therefore, we omit them. Let us denote by $\Gamma$ the bounded linear  map  defined by
\begin{equation*}
\begin{tabular}{r c c c}
$\Gamma :$ & $L^2(0, L) \times L^2(0, L)$        &  $\longrightarrow$ & $L^2(0, L) \times L^2(0, L)$ \\
           & $(\varphi_T(\cdot), \psi_T(\cdot))$ &  $\longmapsto$     & $\Gamma(\varphi^1(\cdot), \psi^1(\cdot))=(u(T,\cdot), v(T,\cdot))$,
\end{tabular}
\end{equation*}
where $(u,v)$ is the solution of \eqref{linear1}-\eqref{linear2}  with
\begin{equation}\label{controldefinition}
\begin{cases}
g_0(t) = c\left(ab\varphi_{xx}(t,0)+\psi_{xx}(t,0)\right), & h_0(t)=0, \\
 g_1(t)= c\left( a\varphi_x(t,L)+\frac{1}{c}\psi_x(L,t)\right),   & h_1(t)=\frac{c}{b}\left(\varphi_x(t,L)+a\psi_x(t,L)\right), \\
g_2(t)=-c\left(ab\varphi(t,L)+\psi(t,L)\right), & h_2(t)=0, \\
\end{cases}
\end{equation}
and $(\varphi,\psi)$ the solution of the backward system \eqref{kdvadjoin}-\eqref{kdvadjcondition} with initial data \(\left(\varphi_T, \psi_T\right)\).  According to Lemma \ref{exactlema} and Proposition \ref{prop1}, we obtain
\begin{align*}
\left (\Gamma(\varphi_T, \psi_T), (\varphi_T, \psi_T) \right)_{\mathcal{X}} =&\left\|\varphi_x(\cdot,L)+a\psi_x(\cdot,L)\right\|_{L^2(0,T)}^2 +\left\|ab\varphi_x(\cdot,L)+\psi_x(\cdot,L)\right\|_{L^2(0,T)}^2  \\
&+\left\|\left(ab\varphi_{xx}(\cdot,0)+\psi_{xx}(\cdot,0)\right)\right\|_{H^{-\frac13}(0,T)}^2 \\
&+ \left\|\left(ab\varphi(\cdot,L)+\psi(\cdot,L)\right)\right\|_{H^{\frac13}(0,T)}^2 \\
\geq & \, C^{-1} \|(\varphi_T, \psi_T)\|_{\X}^2.
\end{align*}
Thus, by the Lax-Milgram theorem, \(\Gamma\) is invertible. Consequently, for given \(\left(u_T, v_T\right) \in \mathcal{X},\) we can define \(\left(\varphi_T, \psi_T\right):=\Gamma^{-1}\left(u_T, v_T\right)\) to solve the system \eqref{kdvadjoin}-\eqref{kdvadjcondition} with initial data \(\left(\varphi_T, \psi_T\right)\) and get \((\varphi, \psi) \in X_{T} .\) Then, if $(h_0,h_1,h_2)$ and $(g_0,g_1,g_2)$ are given by \eqref{controldefinition}, the corresponding solution \((u, v)\) of the system \eqref{linear1}-\eqref{linear2} satisfies
$$
(u(0,\cdot), v(0,\cdot))=(0,0) \quad \text { and } \quad(u(T,\cdot), v(T,\cdot))=\left(u^{1}(\cdot),v^{1}(\cdot)\right).
$$
The result follows from Lax–Milgram theorem. 
\end{proof}
As a consequence of the previous analysis, we have the following result:
\begin{proposition}\label{controlOperator}
For any $L>0$ (when the linear controllability is guaranteed), there exists a bounded linear operator
$$\Psi:\X \times \X \ \to \  \mathcal{H} \times \mathcal{H} $$
such that,  for any $(u_0,v_0)\in \X$ and  $(u_T,v_T) \in \X$,
$$\Psi\left(  \left(u_0,v_0\right), \left(u_T,v_T\right)  \right):= \left(\vec{h},\vec{g}\right)$$
where $\vec{h}=(h_0,h_1,h_2)$ and $\vec{g}=(g_0,g_1,g_2)$ are the controls configurations given by Theorem \ref{mainlineartheorem1}. Then, the  system   \eqref{kdvlin}-\eqref{kdvlinconditions}
 admits a unique solution $(u,v) \in   X_T= {C\left([0,T];\mathcal{X} \right)} 	\cap L^2(0,T;(H(0,L))^2)$,  such that
$$ u(\cdot,0)=u_0,\  v(\cdot,0)=v_0 \quad \text{and} \quad u(T,\cdot)=u_T, \ v(T,\cdot)=v_T.$$
\end{proposition}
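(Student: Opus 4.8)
The plan is to assemble $\Psi$ from bounded linear pieces and then to reduce the general--initial--data problem to the zero--initial--data case already solved in Theorem \ref{mainlineartheorem1} by superposition. First I would verify that the control map built in the proof of Theorem \ref{mainlineartheorem1}, which sends a target state to its controls through the chain $(u_T,v_T)\mapsto(\varphi_T,\psi_T)=\Gamma^{-1}(u_T,v_T)\mapsto(\varphi,\psi)\mapsto(\vec h,\vec g)$, is itself a bounded linear operator from $\X$ into $\mathcal{H}\times\mathcal{H}$. Indeed, $\Gamma$ is bounded and, by the coercivity estimate $(\Gamma(\varphi_T,\psi_T),(\varphi_T,\psi_T))_{\X}\ge C^{-1}\|(\varphi_T,\psi_T)\|_{\X}^2$ established there, the Lax--Milgram theorem yields that $\Gamma^{-1}$ is bounded; the solution map $(\varphi_T,\psi_T)\mapsto(\varphi,\psi)$ of the adjoint system is bounded by Proposition \ref{welladj}; and the boundary traces defining the controls in \eqref{controldefinition} are dominated by the hidden regularity estimates \eqref{putaquepario2}. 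Composing these, I obtain a bounded linear operator
\begin{equation*}
\Lambda:\X\to\mathcal{H}\times\mathcal{H},\qquad \Lambda(u_T,v_T)=(\vec h,\vec g),
\end{equation*}
which drives \eqref{kdvlin}--\eqref{kdvlinconditions} from the rest state to $(u_T,v_T)$ in time $T$, and whose output always has the vanishing components prescribed by the chosen configuration.

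Next I would introduce the free (uncontrolled) evolution. By Proposition \ref{linearNonr-1} with $\vec h=\vec g=0$, for each $(u_0,v_0)\in\X$ the system \eqref{kdvlin}--\eqref{kdvlinconditions} has a unique solution $(u^{h},v^{h})\in X_T$, and evaluation at $t=T$ defines a bounded linear operator $R:\X\to\X$, $R(u_0,v_0):=(u^{h}(T,\cdot),v^{h}(T,\cdot))$, the bound following from the estimate in Proposition \ref{linearNonr-1} and the continuous inclusion of $X_T$ into $C([0,T];\X)$. I would then set
\begin{equation*}
\Psi\big((u_0,v_0),(u_T,v_T)\big):=\Lambda\big((u_T,v_T)-R(u_0,v_0)\big),
\end{equation*}
which is bounded and linear as a composition and combination of the bounded linear maps $\Lambda$ and $R$.

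Finally I would verify the steering property by superposition. Writing $(\vec h,\vec g)=\Psi((u_0,v_0),(u_T,v_T))$, let $(u^h,v^h)$ be the homogeneous solution from $(u_0,v_0)$ with zero controls and let $(\tilde u,\tilde v)$ be the solution from the rest state driven by $(\vec h,\vec g)$, so that $(\tilde u,\tilde v)(T,\cdot)=(u_T,v_T)-R(u_0,v_0)$ by construction of $\Lambda$. Because \eqref{kdvlin} is linear and Proposition \ref{linearNonr-1} guarantees uniqueness, the solution with initial data $(u_0,v_0)$ and boundary data $(\vec h,\vec g)$ is exactly $(u,v)=(u^h,v^h)+(\tilde u,\tilde v)\in X_T$. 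It satisfies $(u,v)(0,\cdot)=(u_0,v_0)$ and, at $t=T$,
\begin{equation*}
(u,v)(T,\cdot)=R(u_0,v_0)+\big((u_T,v_T)-R(u_0,v_0)\big)=(u_T,v_T),
\end{equation*}
as required.

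The result is essentially a corollary of the preceding analysis, so I do not expect a deep analytic obstacle; the two points demanding care are, first, the boundedness chain for $\Lambda$, where one must genuinely invoke the coercivity of $\Gamma$ together with the sharp trace estimates \eqref{putaquepario2} rather than take them for granted, and second, the bookkeeping of the superposition. For the latter one must check that adding the homogeneous summand, which carries zero boundary data, neither alters the controls nor destroys the prescribed configuration (the components forced to vanish in each case of Theorem \ref{mainlineartheorem1} remain zero for both summands, hence for the sum). I expect this configuration bookkeeping, not any estimate, to be the main thing to state carefully, since all quantitative bounds are already furnished by Propositions \ref{welladj} and \ref{linearNonr-1} and by the invertibility of $\Gamma$.
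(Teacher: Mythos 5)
Your proof is correct and matches the argument the paper intends: the paper states Proposition \ref{controlOperator} without any written proof, presenting it as a direct consequence of Theorem \ref{mainlineartheorem1}, and the reduction you spell out (a bounded control map $\Lambda$ obtained from the Lax--Milgram coercivity of $\Gamma$ together with the hidden regularity estimates \eqref{putaquepario2}, combined by superposition with the free evolution $R$ to handle nonzero initial data) is exactly the reduction announced at the start of Section \ref{observabilildade1}. Your formula $\Psi\big((u_0,v_0),(u_T,v_T)\big)=\Lambda\big((u_T,v_T)-R(u_0,v_0)\big)$ is also precisely the form in which $\Psi$ is invoked later in the proof of Theorem \ref{maintheorem}, so your write-up is a faithful filling-in of the omitted proof rather than a different route.
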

\subsection{Nonlinear System}
In this section, we study the controllability of the full system,
\begin{equation}
\begin{cases}\tag{\ref{kdv}}
u_t + uu_x+u_{xxx} + av_{xxx} + a_1vv_x+a_2 (uv)_x =0, \\
c v_t +r v_x +vv_x+ a b u_{xxx} +v_{xxx}+ a_2 buu_x+ a_1 b(uv)_x  =0,\\
u(0,x)=u_{0}(x), \quad v(0,x)=v_{0}(x),
\end{cases}
\end{equation}
with boundary conditions
\begin{equation*}
\left\lbrace
\begin{gathered}\tag{\ref{inputs}}
u(t,0)=h_0,\,\,u_x(t,L)=h_1(t),\,\,u_{xx}(t,L)=h_2(t),\\
v(t,0)=g_0,\,\,v_x(t,L)=g_1(t),\,\,v_{xx}(t,L)=g_2(t).\\
\end{gathered}\right.
\end{equation*}
According to Proposition \ref{linearNonr} and using the definition of the boundary solution, $S_{bdr}$, presented in \cite{KRZ}, the solution of 	\eqref{kdv}-\eqref{inputs} can be written as
\begin{gather*}
\left( \begin{array}{cc} u\\v \end{array}\right) =  S(t) \left( \begin{array}{cc} u_0\\v_0 \end{array}\right) + S_{bdr}(t) \left(\begin{array}{cc}\vec{h} \\  \vec{g}  \end{array}\right) - \int_0^t S (t-\tau)
\left( \begin{array}{cc}  {a_1} vv_x+{a_2} (uv)_x \\ \frac{r}{c}v_x +   \frac{ a_1 b}{c} uu_x+\frac{a_2 b}{c}(uv)_x +  \end{array}\right) d\tau,
\end{gather*}	
where $\{S(t)\}_{t\geq 0}$ and $\{S_{bdr}(t)\}_{t\geq 0}$ are a $C_0$-semigroup and a boundary differential operator, respectively.

\begin{proof}[\textbf{Proof of Theorem \ref{maintheorem}}]
 We will treat the nonlinear problem \eqref{kdv}-\eqref{inputs}
 by using a classical fixed point argument. Indeed, for $u,v \in X_T$,  let us introduce the notation
$$\left(\begin{array}{cc} \upsilon \\ \nu (T,u,v)\end{array}\right)  := \int_0^T S(T-\tau) \left( \begin{array}{cc}  a_1  vv_x  + a_2 (uv)_x  \\
\frac{ a_1 b}{c}  uu_x + \frac{ a_2 b}{c} (uv)_x  \end{array} \right)  d\tau.  $$
Then, for any $u_0,v_0 \in L^2(0,T)$, Proposition \ref{controlOperator} guarantees that there exists a  mapping $\Psi$  which allows us to choose
\begin{gather*}  \left( \begin{array}{cc} \vec{h} \\ \vec{g} \end{array}\right)  = \Psi\left(    \left( \begin{array}{cc}   u_0\\v_0  \end{array}\right),  \left( \begin{array}{cc}  u_T \\ v_T \end{array}\right) +  \left( \begin{array}{cc} \upsilon \\  \nu  (T,u,v)\end{array}\right)   \right).\end{gather*}
Thus, if we define the operator $\Gamma: X_T  \to X_T $ as follows
\begin{multline*}
\Gamma \left( \begin{array}{cc} u\\v \end{array}\right)  =   S(t) \left( \begin{array}{cc} u_0\\v_0 \end{array}\right) + S_{bdr}(x)  \Psi\left(    \left( \begin{array}{cc}   u_0\\v_0  \end{array}\right),  \left( \begin{array}{cc}  u_T \\ v_T \end{array}\right) +  \left( \begin{array}{cc} \upsilon \\  \nu \end{array}\right)  (T,u,v) \right) \\ - \int_0^t S (t-\tau)
\left( \begin{array}{cc}  a_1  vv_x  + a_2 (uv)_x  \\
\frac{r}{c}v_x +\frac{ a_1 b}{c}  uu_x + \frac{a_2 b}{c} (uv)_x  \end{array} \right)  d\tau,
\end{multline*}
it is clear that
\begin{equation*}
\begin{cases}
\left. \Gamma \left( \begin{array}{cc} u\\v \end{array}\right) \right|_{t=0}=\left( \begin{array}{cc} u_0\\v_0 \end{array}\right), \\
\\
 \left. \Gamma \left( \begin{array}{cc} u\\v \end{array}\right) \right|_{t=T}=	\left[	\left( \begin{array}{cc}  u_T \\ v_T \end{array}\right) +  \left( \begin{array}{cc} \upsilon \\  \nu  (T,u,v)\end{array}\right)    \right] -  \left( \begin{array}{cc} \upsilon \\  \nu  (T,u,v)\end{array}\right) =   \left( \begin{array}{cc}  u_T \\ v_T \end{array}\right).
 \end{cases}
\end{equation*}
We claim that the operator $\Gamma$ has a unique fixed point. To do that, we will prove that $\Gamma$ is a contraction in the space $X_T$. Indeed, let us consider
\begin{equation}
B_r=\left\{ \left( \begin{array}{cc} u\\v \end{array}\right)  \in  X_T : \left\| \left( \begin{array}{cc} u\\v \end{array}\right)\right\|_{X_T} \le r \right\},
\end{equation}
for some $r>0$ to be chosen later.   Proposition \ref{controlOperator} and	Theorem	\ref{nonlinearwell-posedness} imply that there exist  constants $C_1,C_{2}>0$, such that
\begin{multline*}
\left\|\Gamma\left( \begin{array}{cc} u\\v \end{array}\right) \right\|_{\mathcal{X}}  \le   C_1 \left\lbrace  \left\| S(t) \left( \begin{array}{cc} u_0\\v_0 \end{array}\right) 	\right\|_{\X}
 +\left\| S_{bdr}(x)  \Psi\left(    \left( \begin{array}{cc}   u_0\\v_0  \end{array}\right),  \left( \begin{array}{cc}  u_T \\ v_T \end{array}\right) +  \left( \begin{array}{cc} \upsilon \\  \nu (T,u,v) \end{array}\right)  \right) \right \|_{\X} \right.
 \\
 \left.+\left\| \int_0^t S (t-\tau) \left( \begin{array}{cc}  a_1  vv_x  + a_2 (uv)_x  \\
\frac{r}{c}v_x+\frac{ a_1 b}{c}  uu_x + \frac{ a_2 b}{c} (uv)_x  \end{array} \right)  d\tau \right\|_{\X} \right\rbrace
\end{multline*}
and
\begin{multline*}
\left\|   \Psi\left(    \left( \begin{array}{cc}   u_0\\v_0  \end{array}\right),  \left( \begin{array}{cc}  u_T \\ v_T \end{array}\right) +  \left( \begin{array}{cc} \upsilon \\  \nu \end{array}\right)  (T,u,v) \right) \right \|_{\mathcal{H}} \\
\leq
C_2 \left\lbrace    \left\|  \left( \begin{array}{cc}   u_0\\v_0  \end{array}\right) \right\|_{\X}+ \left\| \left( \begin{array}{cc}  u_T \\ v_T \end{array}\right) \right\|_{\X} +  \left\|\left( \begin{array}{cc} \upsilon \\  \nu (T,u,v) \end{array}\right)  \right\|_{\X} \right\rbrace.
\end{multline*}
From  Lemma \cite[Lemma 3.1]{BSZ}, we  deduce that
$$
\left\|\Gamma\left(\begin{array}{l}
{u} \\
{v}
\end{array}\right)\right\|_{X_{T}} \leq C_{3} \delta+C_{4}(r+1) r,
$$
where \(C_{4}\) is a constant depending only on \(T\). Thus, choosing \(r\) and \(\delta\) satisfying
$$
r=2 C_{3} \delta
$$
and
$$
2 C_{3} C_{4} \delta+C_{4} \leq \frac{1}{2}
$$
the operator \(\Gamma\) maps \(B_{r}\) into itself for any \(\left( \begin{array}{cc} u\\v \end{array}\right) \in \mathcal{X}_{T}\). Now, proceeding as in the proof of Theorem \ref{nonlinearwell-posedness}, we  obtain that
$$
\left\|\Gamma\left(\begin{array}{l}
{u} \\
{v}
\end{array}\right)-\Gamma\left(\begin{array}{l}
{\widetilde{u}} \\
{\tilde{v}}
\end{array}\right)\right\|_{X_{T}} \leq C_{5}(r+1) r\left\|\left(\begin{array}{c}
{u-\widetilde{u}} \\
{v-\widetilde{v}}
\end{array}\right)\right\|_{X_{T}},
$$
for any \(\left( \begin{array}{cc} u\\v \end{array}\right), \left( \begin{array}{cc} \widetilde{u}\\ \widetilde{v} \end{array}\right) \in B_{r}\), where \(C_{5}>0\) depends only on \(T .\) Thus, taking \(\delta>0,\) such that
$$
\gamma=2 C_{3} C_{5} \delta+C_{5}<1
$$
we have
$$
\begin{array}{l}
{\qquad\left\|\Gamma\left(\begin{array}{c}
{u} \\
{v}
\end{array}\right)-\Gamma\left(\begin{array}{c}
{\widetilde{u}} \\
{v}
\end{array}\right)\right\|_{X_{T}} \leq \gamma\left\|\left(\begin{array}{c}
{u-\widetilde{u}} \\
{v-\widetilde{v}}
\end{array}\right)\right\|_{X_{T}}}.
\end{array}
$$
Therefore, the map \(\Gamma\) is a contraction. Thus,  Banach fixed-point theorem guarantees that \(\Gamma\) has a fixed point in \(B_{r}\), and its fixed point is the desired solution.
\end{proof}

\section{ Open Problem}\label{furthercommentsopenproblems}
This section presents some open problems that bring new challenges to the mathematical community. 

\begin{open}
When  $\mathbf{r\neq 0}$ in the control system \eqref{kdv}-\eqref{inputs}, the ideas contained in this work suggest that one can remove controls in several situations. For instance, if we consider the following configuration
\begin{equation}\label{controltresopen}
\left\lbrace
\begin{array}{l l l}
u(t,0)=h_0(t), & u_x(t,L)=h_1(t),& u_{xx}(t,L)=h_2(t),\\
v(t,0)=0,& v_x(t,L)=0, &v_{xx}(t,L)=0,\\
\end{array}
\right.
\end{equation}
 The observability associated with this case is given by 
\begin{multline*}
\| \psi_T\|^2_{L^2(0,L)} + \frac{b}{c} \| \varphi_T\|^2_{L^2(0,L)} \le C \left( \|  \varphi(t,L) +a \psi(t,L) \|^{2}_{H^{\frac13}(0,T)}
\right. \\
\left. + \|  \varphi_x(t,L) + a\psi_x(t,L) \|^{2}_{L^2(0,T)}   + \|  \varphi_{xx}(t,0) + a\psi_{xx}(t,0)  \|^{2}_{H^{-\frac13}(0,T)}\ \right).
\end{multline*}
Arguing by contradiction and proceeding as in the proof of Proposition \ref{prop2}, the problem is reduced  
to proving a unique continuation property for the solutions of a spectral system. In this case, the
Fourier and Paley-Wiener approach leads to the study of the following entire functions
\begin{equation*}
\hat{\varphi}(\xi)= \frac{    ia \xi^3 \left(   e^{-iL\xi}  \left( i	\xi \alpha +\beta \xi^2\right)   - \gamma \right)  }{P(\xi) }		\quad  \text{and} \quad  \hat{\psi}(\xi)= \frac{    i(p- \xi^3) \left(   e^{-iL\xi}  \left( i	\xi \alpha +\beta \xi^2\right)   - \gamma  	\right) }{ P(\xi) }
\end{equation*}
and the nature of the roots of $ e^{-iL\xi}\left( i\alpha \xi	 + 	\beta \xi^2 	 \right)   - \gamma $ is more complicated than the roots of \eqref{entirefour1}. Therefore, the following question arises:\\

{\bf Question 1}\\
Is the full system \eqref{kdv}-\eqref{inputs} locally exactly controllable when three or fewer control inputs act on boundary conditions,  without any restriction on the parameters $a$, $b$, and $c$?
\end{open}

\begin{open}
On the other hand, the controllability results have been established when $r	\neq 0$ are local. It means one can only guide a small amplitude initial state to a small amplitude terminal state by choosing appropriate boundary control inputs. However, even the global well-posedness of the IBVP \eqref{kdv}-\eqref{inputs} is still an open question, even for a homogeneous boundary. Thus, the following question remains open:\\
{\bf Question 1}\\
Is it possible to show the global well-posedness of the IBVP \eqref{kdv}-\eqref{inputs} in $H^s$ for some $s$?

\end{open}

\textbf{Acknowledgements:}
The first author was partially supported by Universidad Nacional de Colombia Sede Manizales, under the projects Nro HERMES-61029 and HERMES-63945. AFP was partially supported by CNPq (Brazil). This work was
proposed when the second author was visiting IMPA (Brazil) with a post-doc fellowship from CNPq (Brazil) and finished with the project CI - 71011  of the Universidad del Valle, Cali-Colombia. This work was carried out during some visits of the first author to the Federal University of Rio de Janeiro. The author would like to thank the University for the hospitality.


\appendix

\section{Proof of Some Results}\label{moreresults}

\begin{proof}[Proof of Proposition \ref{linearNonr-1}]
Let $K_T:= \left\lbrace (u,v) \in X_T: (u,v) \in  L^{\infty}_x(0,L;(H^{\frac{1-k}{3}}(0,T))^2), k=0,1,2\right\rbrace$ equipped with the norm
\[
\|(u,v)\|_{K_T} = \|(u,v)\|_{X_T}+\sum_{k=0}^{2}\|(\partial_x^j  u, \partial_x^j v)\|_{L^{\infty}_x(0,L;(H^{\frac{1-k}{3}}(0,T))^2)}.
\]
Let $0< \beta \leq T$ to be determined later. For each $u,v \in K_{\beta}$, consider the problem
\begin{equation}\label{e1}
\left\lbrace \begin{tabular}{l l}
$\omega_t + \omega_{xxx} + a\eta_{xxx}  =0$, & in $(0,L)\times (0,\beta)$,\\
$\eta_t +\frac{ab}{c}\omega_{xxx} + \frac{1}{c}\eta_{xxx} =-\frac{r}{c}v_x$, & in $(0,L)\times (0,\beta)$,\\
$\omega(0,t) = h_0(t),\,\,\omega_x(t,L) = h_1(t),\,\,\omega_{xx}(t,L) = h_2(t)$,& in $(0,\beta)$,\\
$\eta(0,t) =g_0(t),\,\,\eta_x(t,L) = g_1(t),\,\, \eta_{xx}(t,L) = g_2(t)$,& in $(0,\beta)$,\\
$\omega(0,x)=u^0(x), \quad v(0,x) = v^0(x)$, & in $(0,L)$.
\end{tabular}\right.
\end{equation}
According to Proposition \ref{linearNonr}, we can define the operator
\[
\Gamma: K_{\beta} \rightarrow K_{\beta}, \quad \text{given by} \quad \Gamma(u,v)=(\omega,\eta),
\]
where $(\omega,\eta)$ is the solution of \eqref{e1}. Moreover,
\begin{equation}\label{hr3}
\|\Gamma(u,v)\|_{K_\beta}\leq C\left\lbrace \|(u^0,v^0)\|_{\X}+\|(\overrightarrow{h},\overrightarrow{g})\|_{\mathbf{\mathcal{H}}^2}+\|(0,v_x)\|_{L^1(0,\beta;\X)} \right\rbrace,
\end{equation}
where the positive  constant $C$ depends only on $T$. Since
$$ \|(0,v_x)\|_{L^1(0,\beta;\X)}\leq \beta^{\frac{1}{2}}\|(u,v)\|_{K_\beta},$$
we obtain a positive constant $C>0$, such that
\begin{equation}\label{e2}
\|\Gamma(u,v)\|_{K_\beta}\leq C\left\lbrace \|(u^0,v^0)\|_{\X}+\|(\overrightarrow{h},\overrightarrow{g})\|_{\mathbf{\mathcal{H}}^2}\right\rbrace+C\beta^{\frac{1}{2}}\|(u,v)\|_{K_\beta}.
\end{equation}
Let $(u,v) \in B_{r}(t,0):=\left\lbrace (u,v) \in K_{\beta}: \|(u,v)\|_{K_{\beta}}\leq r\right\rbrace$, with $$r=2C\left\lbrace \|(u^0,v^0)\|_{\X}+\|(\overrightarrow{h},\overrightarrow{g})\|_{\mathbf{\mathcal{H}}^2}\right\rbrace.$$
Choosing $\beta>0$, satisfying
\begin{equation}\label{beta}
C\beta^{\frac{1}{2}}\leq \frac{1}{2},
\end{equation}
from (\ref{e2}) we obtain
$$\|\Gamma(u,v)\|_{K_\beta}\leq r.$$
The above estimate allows us to conclude that
\[
\Gamma: B_r(t,0)\subset K_{\beta} \rightarrow B_r(t,0).
\]
On the other hand, note that $\Gamma(u_1,v_1)-\Gamma(u_2,v_2)$ solves the following system
\begin{equation*}
\left\lbrace \begin{tabular}{l l}
$\omega_t + \omega_{xxx} + a\eta_{xxx}  =0$, & in $(0,L)\times (0,\beta)$,\\
$\eta_t +\frac{ab}{c}\omega_{xxx} + \frac{1}{c}\eta_{xxx} =-\frac{r}{c}(v_{1x}-v_{2x}) $, & in $(0,L)\times (0,\beta)$,\\
$\omega(t,0) = \omega_x(t,L) = \omega_{xx}(t,L) = 0$,& in $(0,\beta)$,\\
$\eta(t,0)=\eta_x(t,L) = \eta_{xx}(t,L) = 0$,& in $(0,\beta)$,\\
$\omega(0,x)=0, \quad v(0,x) = 0$, & in $(0,L)$.
\end{tabular}\right.
\end{equation*}
Again, from Proposition \ref{linearNonr} and \eqref{beta}, we have
\begin{equation*}
\begin{array}{l}
\|\Gamma(u_1,v_1)-\Gamma(u_2,v_2)\|_{K_\beta}\leq C\|(0,v_{1x}-v_{2x})\|_{L^1(0,\beta;\X)}\leq C\beta^{\frac{1}{2}}\|(u_1,v_1)-(u_2,v_2)\|_{K_{\beta}}\\
\qquad\qquad\qquad\qquad\qquad\quad\leq \displaystyle\frac{1}{2}\|(u_1,v_1)-(u_2,v_2)\|_{K_{\beta}}.\nonumber
\end{array}
\end{equation*}
Hence, $\Gamma: B_r(t,0) \rightarrow B_r(t,0)$ is a contraction and, by Banach fixed point theorem, we obtain a unique $(u,v) \in B_r(t,0)$, such that $$\Gamma(u,v) = (u,v) \in K_{\beta},$$
 and \eqref{hr3} holds, for all $t \in (0,\beta)$. Since the choice of $\beta$ is independent of $(u^0,v^0)$, the standard continuation extension argument yields that the solution $(u,v)$ belongs to $K_T$. The proof is complete.
\end{proof}

\begin{proof}[Proof of Theorem \ref{nonlinearwell-posedness}]
Let us consider
$$
\mathcal{F}_{T}=\left\{(u, v) \in X_{T}:(u, v) \in L_{x}^{\infty}\left(0, L ;\left(H^{\frac{1-k}{3}}(0, T)\right)^{2}\right), k=0,1,2\right\},
$$
which is a Banach space equipped with the norm
$$
\|(u, v)\|_{\mathcal{F}_{T}}=\|(u, v)\|_{\mathcal{Z}_{T}}+\sum_{k=0}^{2}\left\|\left(\partial_{x}^{k} u, \partial_{x}^{k} v\right)\right\|_{L_{x}^{\infty}(0, L ;(H^{\frac{1-k}{3}}(0, T))^{2})}.
$$
Let \(0<T^{*} \leq T\) to be determined later. For each \(u, v \in \mathcal{F}_{T^{*}},\) consider the problem
\begin{equation}\label{eq1}
\left\{\begin{array}{ll}
{\omega_{t}+\omega_{x x x}+a \eta_{x x x}=f(u, v),} & {\text { in }(0, L) \times\left(0, T^{*}\right)} \\
{\eta_{t}+\frac{a b}{c} \omega_{x x x}+\frac{1}{c} \eta_{x x x}=s(u, v),} & {\text { in }(0, L) \times\left(0, T^{*}\right)} \\
{\omega(0, t)=h_{0}(t), \omega_x(L, t)=h_{1}(t), \omega_{xx}(L, t)=h_{2}(t),} & {\text { in }\left(0, T^{*}\right)} \\
{\eta(0, t)=g_{0}(t), \eta_x(L, t)=g_{1}(t), \eta_{xx}(L, t)=g_{2}(t),} & {\text { in }\left(0, T^{*}\right)} \\
{\omega(x, 0)=u^{0}(x), \quad v(x, 0)=v^{0}(x),} & {\text { in }(0, L)},
\end{array}\right.
\end{equation}
where the nonlinear terms are given by $$f(u, v)=-a_{1}\left(v v_{x}\right)-a_{2}(u v)_{x}$$ and $$s(u, v)=-\frac{r}{c} v_{x}-\frac{a_{2} b}{c}\left(u u_{x}\right)-\frac{a_{1} b}{c}(u v)_{x}.$$
Proceeding as in \cite[Lemma 3.1]{BSZ},  we deduce that \(f(u, v)\) and \(s(u, v)\) belong to \(L^{1}\left(0, T^{*} ; L^{2}(0, L)\right)\).  Then, according to Proposition \ref{linearNonr}, we can define the operator
$$
\Gamma: \mathcal{F}_{T^{*}} \rightarrow \mathcal{F}_{T^{*}}, \quad \text { given by } \quad \Gamma(u, v)=(\omega, \eta),
$$
where \((\omega, \eta)\) is the solution of \eqref{eq1}. Moreover, from \cite[Lemma 3.1]{BSZ}, we obtain a positive constant \(C\) depending only on \(T^{*} ,\) such that
\begin{multline*}
\|\Gamma(u, v)\|_{\mathcal{F}_{T^{*}}} \leq C\left\{\left\|\left(u^{0}, v^{0}\right)\right\|_{\mathcal{X}}+\|(\vec{h}, \vec{g})\|_{\mathcal{H}_{T^{*}}}\right\} \\
+C C_{1}\left(\left(T^{*}\right)^{\frac{1}{2}}+\left(T^{*}\right)^{\frac{1}{3}}\right)\left(\|u\|_{X_{{T}^{*}}}^{2}+\left(\|u\|_{X_{{T}^{*}}}+1\right)\|v\|_{X_{T^{*}}}+\|v\|_{X_{T^{*}}}^{2} \right).
\end{multline*}
Let $(u, v) \in B_{r}(t,0):=\left\{(u, v) \in \mathcal{F}_{T^{*}}:\|(u, v)\|_{X_{T^{*}}} \leq r\right\}$, where $$r=2 C\left\{\left\|\left(u^{0}, v^{0}\right)\right\|_{\mathcal{X}}+\|(\vec{h}, \vec{g})\|_{\mathcal{H}_{T}}\right\}.$$
From the estimate above, we get
$$
\|\Gamma(u, v)\|_{\mathcal{F}_{T^{*}}} \leq \frac{r}{2}+C C_{1}\left(\left(T^{*}\right)^{\frac{1}{2}}+\left(T^{*}\right)^{\frac{1}{3}}\right)(3 r+1) r.
$$
Then, we can choose \(T^{*}>0,\) such that
\begin{equation}\label{eq6}
C C_{1}\left(\left(T^{*}\right)^{\frac{1}{2}}+\left(T^{*}\right)^{\frac{1}{3}}\right)(3 r+1) \leq \frac{1}{2},
\end{equation}
to obtain $$\|\Gamma(u, v)\|_{\mathcal{F}_{T^{*}}} \leq r.$$ Thus, we conclude that $
\Gamma: B_{r}(t,0) \subset \mathcal{F}_{T^{*}} \rightarrow B_{r}(t,0)$. On the other hand, \(\Gamma\left(u_{1}, v_{1}\right)-\Gamma\left(u_{2}, v_{2}\right)\) solves the system
$$
\left\{\begin{array}{ll}
{\omega_{t}+\omega_{x x x}+a \eta_{x x x}=f\left(u_{1}, v_{1}\right)-f\left(u_{2}, v_{2}\right),} & {\text { in }(0, L) \times\left(0, T^{*}\right)} \\
{\eta_{t}+\frac{a b}{c} w_{x x x}+\frac{1}{c} \eta_{x x x}=s\left(u_{1}, v_{1}\right)-s\left(u_{2}, v_{2}\right),} & {\text { in }(0, L) \times\left(0, T^{*}\right)} \\
{\omega_{x x}(0, t)=\omega_{x}(L, t)=\omega_{x x}(L, t)=0,} & {\text { in }\left(0, T^{*}\right)} \\
{\eta_{x x}(0, t)=\eta_{x}(L, t)=\eta_{x x}(L, t)=0,} & {\text { in }\left(0, T^{*}\right)} \\
{\omega(x, 0)=0, \quad v(x, 0)=0,} & {\text { in }(0, L)}.
\end{array}\right.
$$
Then, Proposition \ref{linearNonr} and \cite[Lemma 3.1]{BSZ} imply that
$$
\begin{aligned}
\left\|\Gamma\left(u_{1}, v_{1}\right)-\Gamma\left(u_{2}, v_{2}\right)\right\|_{\mathcal{F}_{T^*}} \leq C_{3}\left(\left(T^{*}\right)^{\frac{1}{2}}+\left(T^{*}\right)^{\frac{3}{3}}\right)(8 r+1)\left\|\left(u_{1}-u_{2}, v_{1}-v_{2}\right)\right\|_{\mathcal{F}_{T^{*}}},
\end{aligned}
$$
for some positive constant \(C_{3} .\) Thus, choosing  \(T^{*}>0,\) such that \eqref{eq6} holds and
$$
C_{3}\left(\left(T^{*}\right)^{\frac{1}{2}}+\left(T^{*}\right)^{\frac{1}{3}}\right)(8 r+1) \leq \frac{1}{2},
$$
it follows that
$$
\left\|\Gamma\left(u_{1}, v_{1}\right)-\Gamma\left(u_{2}, v_{2}\right)\right\|_{\mathcal{F}_{T^{*}}} \leq \frac{1}{2}\left\|\left(u_{1}-u_{2}, v_{1}-v_{2}\right)\right\|_{\mathcal{F}_{T^{*}}}.
$$
Hence, \(\Gamma: B_{r}(t,0) \rightarrow B_{r}(t,0)\) is a contraction and, by Banach fixed-point theorem, we obtain a unique \((u, v) \in B_{r}(t,0),\) such that \(\Gamma(u, v)=(u, v) \in \mathcal{F}_{T^{*}}\).
\end{proof}

\color{black}

\end{document}